 \theoremstyle{definition}
 \newtheorem{theorem}{Theorem}[section]%
 \newtheorem{corollary}[theorem]{Corollary}
 \newtheorem{lemma}[theorem]{Lemma}
 \newtheorem{claim}[theorem]{Claim}
 \newtheorem{proposition}[theorem]{Proposition}
 \newtheorem{definition}[theorem]{Definition}
  \numberwithin{equation}{section}
\theoremstyle{remark}
\newcommand{\bit}{\begin{itemize}}
\newcommand{\eit}{\end{itemize}}
\newcommand{\ben}{\begin{enumerate}}
\newcommand{\bens}{\begin{enumerate*}}
\newcommand{\een}{\end{enumerate}}
\newcommand{\eens}{\end{enumerate*}}
\newcommand{\be}{\begin{equation}}
\newcommand{\bes}{\begin{equation*}}
\newcommand{\ee}{\end{equation}}
\newcommand{\ees}{\end{equation*}}
\newcommand{\ba}{\begin{array}}
\newcommand{\ea}{\end{array}}
\newcommand{\abs}[1]{\left|#1\right|}
\newcommand{\dd}{\mathrm{d}}
\newcommand{\dt}{\mathrm{d}t}
\newcommand{\dx}{\mathrm{d}x}
\newcommand{\dy}{\mathrm{d}y}
\newcommand{\co}{\mathrm{\bf co}}
\newcommand{\fel}{\frac{1}{2}}
\newcommand\cI{\mathcal I}
\newcommand\cN{\mathcal N}
\newcommand\cP{\mathcal P}
\newcommand\cS{\mathcal S}
\newcommand\R{\mathbb R}
\newcommand\N{\mathbb N}
\newcommand{\id}{\mathrm{id}}
\newcommand{\ler}[1]{\left( #1 \right)}
\newcommand{\lers}[1]{\left\{ #1 \right\}}
\newcommand\Wp{\mathcal{W}_p}
\newcommand\Wt{\mathcal{W}_2}
\newcommand\Wo{\mathcal{W}_1}
\newcommand\dwpp{d_{\Wp}^p}
\newcommand\dwp{d_{\Wp}}
\newcommand\zo{([0,1])}
\newcommand{\wor}{\mathcal{W}_1(\mathbb{R})}
\newcommand{\dwo}{d_{\mathcal{W}_1}}
\newcommand{\mmn}{M(\mu,\nu)}
\newcommand{\xivmn}{\xi_{v}^{\mu,\nu}}
\newcommand{\xivmnn}{\xi_{v}^{\mu_n,\nun}}
\newcommand{\xihmn}{\xi_{h}^{\mu,\nu}}
\newcommand{\xihmnn}{\xi_{h}^{\mun,\nun}}
\newcommand{\mun}{\mu_n}
\newcommand{\nun}{\nu_n}
\newcommand{\diam}{\mathrm{diam}}
\newcommand{\isemb}{\mathrm{IsEmb}}
\newcommand{\isom}{\mathrm{Isom}}
\newcommand{\dist}{\mathrm{dist}_{\Wp}}
\newcommand{\prob}{\mathcal{P}}
\newcommand*{\bigchi}{\mbox{\Large$\chi$}}
\newcommand\linspan{\operatorname{linspan}}
\def\S{\mathcal{S}}
\newcommand\Lat{\operatorname{Lat}}
\title[Isometric study of $\Wp\zo$ and $\Wp(\R)$]{Isometric study of Wasserstein spaces\\ -- the real line
}                                                                                            
\author[Gy\"orgy P\'al Geh\'er]{Gy\"orgy P\'al Geh\'er}
\address{Gy\"orgy P\'al Geh\'er, Department of Mathematics and Statistics\\ University of Reading\\ Whiteknights\\ P.O.
Box 220\\ Reading RG6 6AX\\ United Kingdom}
\email{G.P.Geher@reading.ac.uk or gehergyuri@gmail.com \newline
\hspace{1.6cm} http://www.math.u-szeged.hu/\~{}gehergy}
\author[Tam\'as Titkos]{Tam\'as Titkos}
\address{Tam\'as Titkos, Alfr\'ed R\'enyi Institute of Mathematics\\ Re\'altanoda u. 13-15.\\
Budapest H-1053\\ Hungary\\ and BBS University of Applied Sciences\\ Alkotm\'any u. 9.\\
Budapest H-1054\\ Hungary}
\email{titkos@renyi.hu \newline http://renyi.hu/\~{}titkos}
\author[D\'aniel Virosztek]{D\'aniel Virosztek}
\address{D\'aniel Virosztek, Institute of Science and Technology Austria \\ Am Campus 1 \\ 3400 Klos\-ter\-neuburg \\ Austria}
\email{daniel.virosztek@ist.ac.at \newline http://pub.ist.ac.at/\~{}dviroszt}
\begin{document}
\subjclass[2010]{Primary: 54E40; 46E27  Secondary: 60A10; 60B05}

\keywords{Wasserstein space, isometric embeddings, isometric rigidity, exotic isometry flow}

\thanks{Geh\'er was supported by the Leverhulme Trust Early Career Fellowship (ECF-2018-125), and also by the Hungarian National Research, Development and Innovation
Office - NKFIH (grant no. K115383).}
\thanks{Titkos was supported by the Hungarian National Research, Development and Innovation Office - NKFIH (grant no. PD128374 and grant no. K115383), by the János Bolyai Research Scholarship of the Hungarian Academy of Sciences, and by the ÚNKP-18-4-BGE-3 New National Excellence Program of the Ministry of Human Capacities.}
\thanks{Virosztek was supported by the ISTFELLOW program of the Institute of Science and Technology Austria (project code IC1027FELL01), by the European Union’s Horizon 2020 research and innovation program under the Marie Sklodowska-Curie Grant Agreement No. 846294, and partially supported by the Hungarian National Research, Development and Innovation Office - NKFIH (grants no. K124152 and no. KH129601).}

\begin{abstract}
Recently Kloeckner described the structure of the isometry group of the quadratic Wasserstein space $\mathcal{W}_2(\R^n)$. It turned out that the case of the real line is exceptional in the sense that there exists an exotic isometry flow. Following this line of investigation, we compute $\isom\ler{\Wp(\R)}$, the isometry group of the Wasserstein space $\Wp(\R)$ for all $p \in [1, \infty)\setminus\{2\}$. We show that $\mathcal{W}_2(\R)$ is also exceptional regarding the parameter $p$: $\Wp(\R)$ is isometrically rigid if and only if $p\neq 2$. Regarding the underlying space, we prove that the exceptionality of $p=2$ disappears if we replace $\R$ by the compact interval $[0,1]$. Surprisingly, in that case, $\Wp\zo$ is isometrically rigid if and only if  $p\neq1$. Moreover, $\Wo\zo$ admits isometries that split mass, and $\isom\ler{\Wo\zo}$ cannot be embedded into $\isom\ler{\wor}$.
\end{abstract}
\maketitle
\tableofcontents

\section{Introduction}
\subsection{Motivation and State of the Art}
Given a complete and separable metric space $X$, one defines its Wasserstein space as the collection of sufficiently concentrated Borel probability measures endowed with a metric which is calculated by means of optimal transport (see the precise definitions in Subsection \ref{s:notions}). 
This notion has strong connections to many flourishing areas in pure and applied mathematics including probability theory \cite{bgl,Butkovsky}, theory of (stochastic) partial differential equations \cite{hairer,navier-stokes}, geometry of metric spaces \cite{LV,VRS,S}, machine learning \cite{m1,m2}, and many more. Besides of these connections, the $p$-Wasserstein space itself is an interesting object, being a measure theoretic analogue of $L^p$ spaces \cite{kloeckner-hausdorff}. 

When working in a metric setting, a natural question arises: \emph{can we compute the group of isometries?} The answer is known for various concrete metric spaces.
However, the problem about the isometric embedding semigroup is usually incomparably more difficult, hence an answer is known only for a few cases.
Classical examples from functional analysis include the Banach--Lamperti theorem \cite{Lamperti} which describes the semigroup of all linear isometric embeddings of  $L^p$ spaces, or the Banach--Stone theorem which describes the group of all linear isometries of the Banach space of all continuous functions over a compact Hausdorff space.
We now recall some more recent examples, concentrating on those where the metric space consists of Borel probability measures. 
The common theoretical importance of all the forthcoming metrics is that they metrise the weak convergence of measures. 
Moln\'ar proved in \cite{molnar-levy} that the space of all Borel probability measures over $\R$ endowed with the L\'evy metric is isometrically rigid, that is, each isometry is a push-forward map induced by an isometry of the underlying space $\R$. This result has been generalised for separable real Banach spaces in \cite{geher-titkos}.
For a more detailed overview of similar results we refer the reader to the survey \cite{virosztek-asm-szeg}, where the case of the Kolmogorov-Smirnov \cite{dolinar-molnar} and the Kuiper distances \cite{geher-kuiper} are also discussed. 

Bertrand and Kloeckner wrote a series of papers \cite{bertrand-kloeckner-hadamard,bertrand-kloeckner-2016,Kloeckner-2010,kloeckner-hausdorff, ultrametric} about the isometry groups of quadratic Wasserstein spaces over various metric spaces.
Here we only recall one of Kloeckner's results \cite[Theorem 1.1]{Kloeckner-2010} in which he described the isometry group of the quadratic Wasserstein space over $\R$, and showed the surprising fact that this space admits so-called exotic isometries whose action is wild in a sense.

We would like to point out that in all of the above results about measures the assumption of bijectivity of the distance preserving maps was crucial in order to obtain these descriptions.
In the present paper we continue our study on the (not necessarily bijective) isometric embeddings of Wasserstein spaces, which we started in the recent paper \cite{gtv}, where we provided a complete description for the case of the discrete metric space.

\subsection{Main results, content of the paper}
Kloeckner proved in \cite{Kloeckner-2010} that the isometric structure of $\mathcal{W}_2(\R)$ is exceptional among $\mathcal{W}_2(\R^n)$ spaces. Namely, there exists an exotic isometry flow of $\mathcal{W}_2(\R)$. Our aim  here is to show that $\mathcal{W}_2(\R)$ is exceptional regarding the parameter $p$ as well. It turns out that exceptionality of $p=2$ disappears if we replace $\R$ by $[0,1]$. Surprisingly, in that case $p=1$ is exceptional. The main result of this paper is to get the full picture in the case of the real line. That is, we compute $\isom\ler{\Wp(\R)}$, the isometry group of the Wasserstein space $\Wp(\R)$ for all $p \in [1, \infty)\setminus\{2\}$, see the table below, where $C_2$ denotes the two-element group.

$$
\begin{array}{|*3{c|}}
\hline {} & \makecell{\isom(\Wp\zo) \\ (\text{Isometric rigidity})} & \makecell{\isom(\Wp\ler{\R}) \\ (\text{Isometric rigidity})}  \\
\hline p=1 & \makecell{C_2\times C_2 \\ (\text{not rigid})}& \makecell{\isom(\R)\\ (\text{rigid})} \\
\hline p>1,\,p\neq2 & \makecell{C_2 \\ (\text{rigid})} & \makecell{\isom(\R) \\ (\text{rigid})}\\
\hline p=2 & \makecell{C_2 \\ (\text{rigid})} & \makecell{\textcolor{gray}{\isom(\R)\ltimes\isom(\R)}\\ \textcolor{gray}{(\text{not rigid})
}}\\
\hline
\end{array}
$$
In fact, besides describing the isometry group, we are able to answer more challenging questions regarding the isometric structure. Below we summarize our results, and briefly sketch the method.\\

Section \ref{s:II} is devoted to handle the case of the interval.
In Subsection \ref{s:wozo} we characterize all isometric embeddings of $\Wo\zo$. 
Using the Harris inequality, we find an extremal metric property which is satisfied exactly for those measures that are either Dirac masses, or convex combinations of two Dirac masses concentrated on $\{0,1\}$. 
Thus we are able to obtain that isometric embeddings are automatically surjective, and that the isometry group is the Klein group. 
Moreover, $\Wo\zo$ admits isometries that split mass, which  is quite unusual for Wasserstein spaces (see the aforementioned papers of Bertrand and Kloeckner). The case $p>1$ is investigated in Subsection \ref{s:wpzo}. Using induction and finding some extremal metric properties, we show that the set of all measures supported on $2^N$ points with equi-distributed weights are left invariant ($N\in\N$). As a consequence, we again have that every isometric embedding is necessarily surjective, and the isometry group consists of the push-forward maps induced by the two isometries of the interval.\\

The case of the real line is investigated in Section \ref{s:III}. The main result of Subsection \ref{s:wor} is that every isometry of $\Wo(\R)$ is implemented by an isometry of $\R$. The main issue here is to find a metric characterization of Dirac masses which we do by examining when the diameter of the metric midpoint set of two measures is minimal. This process naturally leads to the notions of vertical and horizontal bisecting measures, and that of adjacent measures.
As a consequence of our description, we conclude that there are two isometries of $\Wo\zo$ that cannot be extended to an isometry of $\wor$, even though $\Wo\zo$ embeds naturally into $\wor$. Furthermore, the isometry group of $\wor$ does not contain an isomorphic copy of the isometry group of $\Wo\zo$.

In Subsection \ref{s:wpr} we describe the general form of (not necessarily surjective) isometric embeddings of $\Wp(\R)$ for $p>1$ and $p\neq 2$. 
The key ingredients here are the Banach--Lamperti theorem and an abstract Mankiewicz-type extension lemma. 
We show that every isometric embedding is a composition of a push-forward of an isometry of $\R$ and a map which acts as a translation on quantile functions. In Subsection \ref{s:w2r} we take a closer look at Kloeckner's result \cite[Theorem 1.1]{Kloeckner-2010} on $\isom\ler{\mathcal{W}_2(\R)}$, in particular, at its exotic isometry flow. In \cite{Kloeckner-2010}, these exotic isometries were defined explicitly on measures supported on at most two points, and it was proved that there exists a unique extension to the whole space. However, the action of the exotic isometry flow was not given explicitly on general measures.
Our contribution here is to provide a functional analytic description of this action on the whole $\mathcal{W}_2(\R)$ in terms of quantile functions, which involves the well-known Volterra- and a composition operator.

\subsection{Technical preliminaries}\label{s:notions}
The aim of this subsection is to {set} the terminology.

\begin{definition}[Isometric embedding, isometry]\label{defi:isemb-isom}
Let $\big(X,\varrho\big)$ be a metric space. A self-map $f\colon X\rightarrow X$ is called an \emph{isometric embedding} if it preserves the distance, that is, $$\varrho(f(x),f(y))=\varrho(x,y)\qquad (x,y \in X).$$ \emph{Surjective} isometric embeddings are termed \emph{isometries.}
\end{definition}

Note that isometric embeddings acting on $X$ form a unital semigroup which we denote by $\isemb(X)$.
The symbol $\isom(X)$ stands for the group of all isometries. 
We denote by $\prob(X)$ the set of all Borel probability measures on $X$.

\begin{definition}[Coupling measure] \label{defi:coupling}
Let $X$ be a complete and separable metric space and let $\mu,\nu\in\prob(X)$. A Borel probability measure $\pi$ on $X \times X$ is said to be a \emph{coupling} of $\mu$ and $\nu$ if the marginals of $\pi$ are $\mu$ and $\nu$, that is, $\pi\ler{A \times X}=\mu(A)$ and $\pi\ler{X \times B}=\nu(B)$ for all Borel sets $A,B\subseteq X$. The set of all couplings is denoted by $\Pi(\mu,\nu)$.
\end{definition}

By means of couplings, in other words \emph{transport plans}, we can define the $p$-Wasserstein space and the corresponding $p$-Wasserstein distance. For more details we refer the reader to the fundamental works of Villani \cite{villani-book,villani-ams-book}.

\begin{definition}[$p$-Wasserstein space]\label{defi:wass-space}
Let $\big(X,\varrho\big)$ be a complete and separable metric space, and $p\geq 1$ be a parameter. The \emph{$p$-Wasserstein space} $\Wp(X)$ is the set of all $\mu\in\prob(X)$ that satisfy
\bes
\int_X \varrho(x,\hat{x})^p~\mathrm{d}\mu(x)<\infty
\ees
for some (hence all) $\hat{x}\in X$, endowed with the \emph{$p$-Wasserstein distance}
\bes %\label{eq:wasser_def}
\dwp\ler{\mu, \nu}:=\ler{\inf_{\pi \in \Pi(\mu, \nu)} \int_{X \times X} \varrho(x,y)^p~\dd \pi(x,y)}^{\frac{1}{p}}.
\ees
\end{definition}

In words, the $p$-Wasserstein space $\Wp(X)$ is the set of all probability distributions that have finite moment of order $p$, endowed with the $p$-Wasserstein metric $\dwp$. 
We remark that $\dwp$ metrizes the weak convergence and is sensitive to large distances in $X$.
Clearly, the embedding of $X$ as Dirac masses 
\bes
\iota\colon X\to\Wp(X),\qquad \iota(x):=\delta_x
\ees
is distance preserving. Moreover, isometries of the underlying space appear in $\isom(\Wp(X))$ by means of a natural group homomorphism given in \eqref{eq:hashtag} below. Throughout this paper the set of all Dirac masses is denoted by $\Delta(X)$. 

\begin{definition}[Push-forward]
For a measurable map $g\colon X \rightarrow X$ the induced \emph{push-forward map} $g_\# \colon \prob(X)\to\prob(X)$ is defined by
\bes %\label{E:pf}
\big(g_\#(\mu)\big)(A)=\mu(g^{-1}[A])\qquad(A\subseteq X~\mbox{Borel set}, \; \mu\in\prob(X))
\ees
where $g^{-1}[A]=\{x\in X\,|\, g(x)\in A\}$. 
We call $g_\#(\mu)$ the \emph{push-forward} of $\mu$ with $g$.
If $\psi\in\isom(\R)$, then the push-forward map $\psi_\#$ is an isometry of $\Wp(X)$, and the embedding
\be \label{eq:hashtag}
\#\colon \, \mathrm{Isom} (X) \rightarrow \mathrm{Isom}  \ler{\Wp(X)}, \qquad \psi \mapsto \psi_\#
\ee
is a group homomorphism. Isometries of the form $\psi_\#$ are called \emph{trivial isometries}.
\end{definition}

A special feature of Wasserstein spaces on the real line is that the Wasserstein distance of measures can be calculated by means of their cumulative distribution and quantile functions.

\begin{definition}[Cumulative distribution and quantile functions]
In case of $\big(X,\varrho\big)=\big(\mathbb{R},|\cdot|\big)$, the \emph{cumulative distribution function} of a measure $\mu\in\prob(\mathbb{R})$ is defined as 
\bes
F_\mu\colon \,\mathbb{R} \rightarrow [0,1], \quad x \mapsto F_\mu(x):=\mu\ler{(-\infty,x]}.
\ees
We use the shorthand notation $F_\mu(x-) := \lim_{t\nearrow x} F_\mu(t)$ for the limit from the left.
The \emph{quantile function} of $\mu$ (or the right-continuous generalized inverse of $F_\mu$) is denoted by $F_\mu^{-1}$ and is defined as
\bes
F_\mu^{-1}\colon \, (0,1) \rightarrow\mathbb{R}, \quad y \mapsto F_\mu^{-1}(y):=\sup \left\{ x\in\mathbb{R} \, | \, F_\mu(x) \leq y \right\}.
\ees
In case of $\big(X,\varrho\big)=\big([0,1],|\cdot|\big)$ we shall handle the cumulative distribution and the quantile functions of a $\mu\in\prob\zo$ as $[0,1]\rightarrow [0,1]$ functions. The quantile function in this case is defined by right-continuity at $0$ and it takes the value $1$ at $1$.
\end{definition}

Note that the cumulative distribution function of a $\mu\in\prob\zo$ is monotone increasing, continuous from the right and take the value $1$ at the point $1$. Conversely, any function $F\colon [0,1] \rightarrow [0,1]$ satisfying the above three conditions is the cumulative distribution function of some Borel probability measure on $[0,1]$. Consequently, for any measure  $\mu\in\prob\zo$, the function $F_\mu^{-1}$ is a cumulative distribution function of some measure $\nu \in\prob\zo$, that is, $F_\nu=F_{\mu}^{-1}$.

As was mentioned above, in our setting the $p$-Wasserstein distance can be expressed in terms of the cumulative distribution and quantile functions.
Namely, Vallender proved in \cite{vallender} that
\be \label{eq:vall-central}
\dwo\ler{\mu, \nu} = \int_{-\infty}^\infty \abs{F_\mu(x)-F_\nu(x)}~\dx = \int_0^1 \abs{F_\mu^{-1}(x)-F_\nu^{-1}(x)}~\dx
\ee
for all $\mu,\nu\in\wor$. 
Moreover, Vallender's formula can be generalized in the following way:
\be \label{eq:wp-tav}
\dwp\ler{\mu, \nu} = \ler{\int_0^1 \abs{F_\mu^{-1}-F_\nu^{-1}}^p \dt}^{\frac{1}{p}} \qquad \ler{p>1, \; \mu, \nu \in \Wp(\R)},
\ee
see for instance \cite[Remarks 2.19]{villani-ams-book}.
These two formulae will play an important role in the sequel.

\section{Isometric study of $\Wp\zo$}\label{s:II}
Knowing Kloeckner's result on the exotic isometry flow in $\mathcal{W}_2(\R)$, a natural question arises: how does the isometry group look like when one replaces $\R$ by the compact interval $[0,1]$. Investigating this problem we found on the one hand that exceptionality of $p=2$ disappears, and instead the case $p=1$ becomes exceptional. On the other hand, it turned out that some observations regarding $\Wo\zo$ can be used when dealing with $\wor$, thus we decided to start with the case of $\Wp\zo$ spaces. The aim of this section is to describe the structure of all (not necessarily surjective) distance preserving self-maps of $\Wp\zo$. In fact, we will prove that every isometric embedding is automatically surjective, thus $\isom\ler{\Wp\zo}=\isemb\ler{\Wp\zo}$.

\subsection{$p=1$ -- Isometries splitting mass}
\label{s:wozo}

We start by naming two maps that arise naturally.

\begin{definition}[Reflection in $\Wo\zo$]
The isometry group of $[0,1]$ is isomorphic to $C_2$, and is generated by $r\colon [0,1]\to[0,1]$, $r(x):=1-x$. 
According to \eqref{eq:hashtag}, the push-forward map $r_\#$ is an isometry of $\Wo\zo$ which we call \emph{reflection}.
\end{definition}

\begin{definition}[Flip operation]
The map
\bes%\label{eq:flipdef}
j\colon \, \Wo\zo \rightarrow \Wo\zo, \quad \mu \mapsto j\ler{\mu}, \quad F_{j\ler{\mu}}=F_\mu^{-1}
\ees
is called the \emph{flip operation}. 
The map $j$ is surjective, and thus we see from \eqref{eq:vall-central} that $j\in\mathrm{Isom}(\Wo\zo)$.
\end{definition}

We remark that the flip operation does not send Dirac masses to Dirac masses in general, which is quite unusual among isometries of Wasserstein spaces. Indeed, $$j\ler{\delta_t}=t \delta_0 +(1-t)\delta_1\qquad(0\leq t \leq 1).$$

The essential part of our argument will be to show that the flip operation and reflection generate the semigroup $\isemb(\Wo\zo)$.
As these two maps are bijective, it will follow that $\isemb(\Wo\zo) = \isom(\Wo\zo) = C_2\times C_2$, the Klein group.

When describing the form of isometric embeddings, it is a natural idea to identify those subsets of the space in question that can be characterized by means of certain extremal metric properties. 
Our first observation is that
\bes
\diam(\Wo\zo)=\sup\left\{\int_0^1|F_\mu(t)-F_\nu(t)|~dt\,\Big|\,\mu,\nu\in\Wo\zo\right\}=1,
\ees
and this supremum is attained if and only if $\{\mu,\nu\}=\{\delta_0,\delta_1\}$. Therefore, 
\bes 
\{\delta_0,\delta_1\}=\{\varphi(\delta_0),\varphi(\delta_1)\} \qquad \ler{\varphi\in\isemb(\Wo\zo)}.
\ees
Note also that the triangle inequality $\dwo\ler{\delta_0,\delta_1} \leq \dwo\ler{\delta_0,\mu} + \dwo\ler{\mu,\delta_1}$ is saturated for every measure $\mu \in \Wo\zo$. Indeed,
\bes
\dwo\ler{\delta_0,\mu} +\dwo\ler{\mu,\delta_1}
=\int_{[0,1]} \abs{0-y}~\dd \mu(y)+\int_{[0,1]} \abs{x-1}~\dd \mu (x)=1
\ees
holds for all $\mu\in\Wo\zo$. Let us define the set
\bes
S_t:=\big\{\mu \in\Wo\zo \,\big|\, \dwo\ler{\delta_0, \mu}=t\big\}\qquad(t\in[0,1]),
\ees
which we call the \emph{$t$-slice.}
Clearly, if $\varphi \in \mathrm{IsEmb} \ler{\Wo\zo}$ with $\varphi\ler{\delta_0}=\delta_0$, then $\varphi\ler{S_t} \subseteq S_t$. 

In the next Claim we characterize those elements in $S_t$ that have maximal distance.
The Harris inequality plays an important role in our argument.
We introduce the following notations for functions $F$ and $G$:
$$
	\ler{F \wedge G}(x):=\min\{F(x),G(x)\} \quad\text{and}\quad \ler{F \vee G}(x):=\max\{F(x),G(x)\}.
$$

\begin{claim} \label{claim:s-t-max-dist}
Let $0\leq t\leq 1$. 
The $t$-slice $S_t$ has diameter $2t(1-t)$. 
That is,
\bes\dwo\ler{\rho, \sigma} \leq 2 t (1-t) \qquad \ler{\rho, \sigma \in S_t}
\ees
and 
$$
\dwo\ler{\rho, \sigma} = 2 t (1-t) \;\;\;\iff\;\;\; \{\rho,\sigma\}=\{(1-t) \delta_0+t\delta_1,\delta_t\}.
$$
\end{claim}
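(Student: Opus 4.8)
The plan is to work entirely with cumulative distribution functions and to use Vallender's formula \eqref{eq:vall-central}, which identifies $\dwo(\rho,\sigma)$ with the $L^1$-distance $\int_0^1 |F_\rho - F_\sigma|$. First I would record what membership in $S_t$ means at the level of distribution functions: since $\dwo(\delta_0,\mu) = \int_0^1 (1 - F_\mu(x))\,\dx = 1 - \int_0^1 F_\mu$, we have $\mu \in S_t$ if and only if $\int_0^1 F_\mu(x)\,\dx = 1-t$. So the $t$-slice corresponds precisely to the monotone, right-continuous $[0,1]\to[0,1]$ distribution functions with integral $1-t$.

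The key step is a pointwise application of the Harris (FKG-type) inequality. Given $\rho,\sigma \in S_t$, set $m := F_\rho \wedge F_\sigma$ and $M := F_\rho \vee F_\sigma$; both are again distribution functions of measures in $\prob\zo$. Then
\bes
\dwo(\rho,\sigma) = \int_0^1 |F_\rho - F_\sigma| = \int_0^1 (M - m) = \int_0^1 M - \int_0^1 m = \ler{\int_0^1 M - (1-t)} + \ler{(1-t) - \int_0^1 m},
\ees
using $\int_0^1 F_\rho = \int_0^1 F_\sigma = 1-t$. Now Harris's inequality applied to the monotone functions $F_\rho, F_\sigma$ (more precisely, to $1-F_\rho$ and $F_\sigma$, or an appropriate sign bookkeeping) gives a bound on how far $\int_0^1 M$ and $\int_0^1 m$ can be pushed away from $1-t$. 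Concretely, I expect the clean route is: since $0 \le m \le M \le 1$ and $m + M = F_\rho + F_\sigma$, write $M = \frac{F_\rho+F_\sigma}{2} + \frac{|F_\rho - F_\sigma|}{2}$ and bound $\int_0^1 \frac{|F_\rho-F_\sigma|}{2}$ by exploiting that $M$ is itself a distribution function with $M(x) \ge \max(F_\rho(x),F_\sigma(x))$; the correlation/Harris inequality forces $\int_0^1 M \le 1-t + t$-type constraints, yielding $\int_0^1 M - (1-t) \le t(1-t)$ and symmetrically $(1-t) - \int_0^1 m \le t(1-t)$, hence $\dwo(\rho,\sigma) \le 2t(1-t)$.

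For the equality case I would track when both Harris inequalities are tight. Equality in Harris for monotone functions forces one of them to be essentially a $\{0,1\}$-valued step function (a ``degenerate'' correlation), which at the level of measures means $M$ is the distribution function of a Dirac mass and $m$ is the distribution function of a measure supported on $\{0,1\}$ — together with the slice constraints $\int M = \int m = 1-t$, this pins down $M = F_{\delta_t}$ and $m = F_{(1-t)\delta_0 + t\delta_1}$, and correspondingly $\{F_\rho, F_\sigma\}$ must be exactly this pair pointwise a.e., hence $\{\rho,\sigma\} = \{\delta_t,\,(1-t)\delta_0+t\delta_1\}$. The endpoint cases $t \in \{0,1\}$ are trivial since then $S_t = \{\delta_0\}$ or $\{\delta_1\}$ and $2t(1-t)=0$.

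The main obstacle I anticipate is pinning down the precise form of the Harris inequality that does the work and, more delicately, extracting the equality characterization from it: Harris's inequality in its bare form gives $\int FG \ge \int F \int G$ for comonotone $F,G$, and converting this into a sharp bound on $\int_0^1 (F_\rho \vee F_\sigma)$ subject to the fixed-integral constraint, with a clean equality analysis, will require choosing the right auxiliary functions and being careful about almost-everywhere versus everywhere statements for right-continuous monotone functions. Once the correct inequality is in place, the rest is bookkeeping with the slice condition.
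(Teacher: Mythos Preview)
Your overall plan coincides with the paper's: translate the slice condition into $\int_0^1 F_\rho = \int_0^1 F_\sigma = 1-t$, write $\dwo(\rho,\sigma) = \int_0^1(M-m)$ with $M = F_\rho\vee F_\sigma$ and $m = F_\rho\wedge F_\sigma$, and use the Harris inequality $\int_0^1 F_\rho F_\sigma \ge \bigl(\int_0^1 F_\rho\bigr)\bigl(\int_0^1 F_\sigma\bigr) = (1-t)^2$. Two points, however, are not yet in place.

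\textbf{The missing bridge.} Harris bounds $\int F_\rho F_\sigma$, not $\int m$; you still need to pass from the product to the minimum. The paper does this with the elementary pointwise inequality $\min(a,b)\ge ab$ valid for $a,b\in[0,1]$, which gives $\int_0^1 m \ge \int_0^1 F_\rho F_\sigma \ge (1-t)^2$. Then, since $\int_0^1(M+m)=\int_0^1(F_\rho+F_\sigma)=2(1-t)$, one gets directly
\[
\dwo(\rho,\sigma)=\int_0^1(M-m)=2(1-t)-2\int_0^1 m \le 2(1-t)-2(1-t)^2 = 2t(1-t).
\]
Note also that your two ``symmetric'' bounds $\int M-(1-t)\le t(1-t)$ and $(1-t)-\int m\le t(1-t)$ are the \emph{same} inequality, because $\int M + \int m = 2(1-t)$; there is only one thing to prove.

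\textbf{The equality analysis is misattributed.} Equality in Harris for nondecreasing $F_\rho,F_\sigma$ does \emph{not} force one of them to be $\{0,1\}$-valued; it forces (via the vanishing of $(F_\rho(x)-F_\rho(y))(F_\sigma(x)-F_\sigma(y))$ for a.e.\ $(x,y)$, hence by right-continuity for all $(x,y)\in[0,1)^2$) that one of $F_\rho,F_\sigma$ is \emph{constant} on $[0,1)$. The $\{0,1\}$-valued conclusion comes from equality in the \emph{second} inequality, $\min(a,b)=ab$, which holds only when $a\in\{0,1\}$ or $b\in\{0,1\}$. Combining the two: one distribution function is constant on $[0,1)$ (so the measure is $(1-t)\delta_0+t\delta_1$ by the integral constraint), and then the other must take values in $\{0,1\}$ on $[0,1)$ (so it is a Dirac, namely $\delta_t$). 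Your sketch collapses these two separate saturation conditions into one, which would not yield the characterisation.
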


\begin{proof}
The statement is trivial if $t=0$ or $t=1$.
Let $t\in(0,1)$ and $\rho, \sigma \in S_t$ be arbitrary but fixed. By \eqref{eq:vall-central} we have 
\be\label{eq:1-t}
\int_0^1 F_\rho(x)~\dx=\int_0^1 F_\sigma(x)~\dx=1-t.
\ee

First, we show that
$\int_0^1\ler{F_{\rho} \wedge F_{\sigma}}(x)~\dx\geq(1-t)^2$
holds. As both $F_\rho$ and $F_\sigma$ are monotone increasing, we have
\bes
\ler{F_\rho(x)-F_\rho(y)}\ler{F_\sigma(x)-F_\sigma(y)}\geq 0 \qquad \ler{x,y \in [0,1]}.
\ees
Consequently,
\be \label{eq:harris}
\int_{0}^1 \int_{0}^1 \ler{F_\rho(x)-F_\rho(y)}\ler{F_\sigma(x)-F_\sigma(y)}~\dy~\dx\geq 0,
\ee
which is equivalent to
\be\label{eq:le}
\int_{0}^1 F_\rho(x) F_\sigma(x)~\dx \geq \int_0^1 F_\rho(x)~\dx \cdot \int_0^1 F_\sigma(x)~\dx.
\ee
As $0 \leq F_\rho(x)\leq 1$ and $0\leq F_\sigma(x) \leq 1$, we have $\ler{F_\rho \wedge F_\sigma} (x) \geq F_\rho(x) F_\sigma(x)$ for all $x \in [0,1]$. 
Therefore, combining this with \eqref{eq:1-t} and \eqref{eq:le} we obtain
\be \label{eq:fo1}
(1-t)^2\leq \int_{0}^1 F_\rho(x) F_\sigma(x)~\dx \leq \int_{0}^1 \ler{F_\rho \wedge F_\sigma} (x)~\dx.
\ee

Second, we prove that inequalities in \eqref{eq:fo1} are equalities  if and only if $$\{\rho,\sigma\}=\{(1-t) \delta_0+t\delta_1,\delta_t\}.$$
Since both $F_\rho$ and $F_\sigma$ are continuous from the right, so are the functions $F_\rho \cdot F_\sigma$ and $F_\rho \wedge F_\sigma$. Therefore, by the equivalence of \eqref{eq:harris} and \eqref{eq:le}, the first inequality in \eqref{eq:fo1} is saturated if and only if we have
\be\label{eq:cica}
	F_\rho(x) = F_\rho(y) \quad\text{or}\quad F_\sigma(x) = F_\sigma(y) \quad \ler{(x,y) \in [0,1) \times [0,1)}.
\ee
Moreover, the second inequality in \eqref{eq:fo1} is saturated if and only if 
$$
	F_\rho(x)F_\sigma(x)=\ler{F_\rho \wedge F_\sigma}(x) \quad \ler{x \in [0,1)},
$$
which means that we have 
\be\label{eq:kutya}
	F_\rho(x) \in \{0,1\} \quad\text{or}\quad F_\sigma(x) \in \{0,1\} \quad \ler{x \in [0,1)}.
\ee
Notice that if any of the distribution functions $F_\sigma$ and $F_\rho$ is constant on $[0,1)$, then by \eqref{eq:1-t} its value must be $1-t$ on $[0,1)$.
Observe also that by \eqref{eq:kutya} at most one of them is constant on $[0,1)$, say $F_\sigma$ is not.
This means that we have $F_\sigma(x) < F_\sigma(y)$ for some $0 \leq x < y<1$. 
However by \eqref{eq:cica}, this implies $F_\rho(\tilde x) = F_\rho(\tilde y)$ for all $0 \leq \tilde x \leq x$ and $y \leq \tilde y<1$, and thus $F_\rho$ must be constant $1-t$ on $[0,1)$, or equivalently, $\rho=(1-t) \delta_0+t\delta_1$. 
It follows from \eqref{eq:kutya} that $F_\sigma(x) \in \{0,1\}$ for all $x \in [0,1)$ which means that $\sigma$ must be a Dirac measure. 
By \eqref{eq:1-t} we conclude $\sigma=\delta_t$.

Now, using \eqref{eq:vall-central}, we get on the one hand that
\be\label{1h}
\dwo\ler{\rho, \sigma}=\int_0^1 \abs{F_\rho(x)-F_\sigma(x)} ~\dx
=\int_0^1 \ler{F_\rho \vee F_\sigma}(x) -\ler{F_\rho \wedge F_\sigma}(x) ~\dx.
\ee
On the other hand, we have
\be\label{2h}
\int_0^1 \ler{F_\rho \vee F_\sigma}(x) +\ler{F_\rho \wedge F_\sigma}(x) ~\dx= \int_0^1 F_\rho(x) +F_\sigma(x) ~\dx =2 (1-t).
\ee
Finally, combining \eqref{1h} and \eqref{2h} with inequality \eqref{eq:fo1}, we conclude that
\bes
\dwo\ler{\rho, \sigma}=2 (1-t) - 2 \int_{0}^1 \ler{F_\rho \wedge F_\sigma} (x) ~\dx\leq2(1-t)-2(1-t)^2=2t(1-t)
\ees
with equality if and only if $\{\rho,\sigma\}=\{(1-t) \delta_0+t\delta_1,\delta_t\}$. The proof is complete.
\end{proof}

We remark here that Claim \ref{claim:s-t-max-dist} roughly speaking describes the shape of $\Wo\zo$, suggesting that the action of a $\varphi\in\isemb(\Wo\zo)$ on $\left\{\delta_0,\delta_{\frac{1}{2}}\right\}$ determines $\varphi$ completely.
This is indeed the case and we make this precise as follows.
%==================================================
\begin{figure}[H]
	\centering
	\includegraphics[scale=0.7]{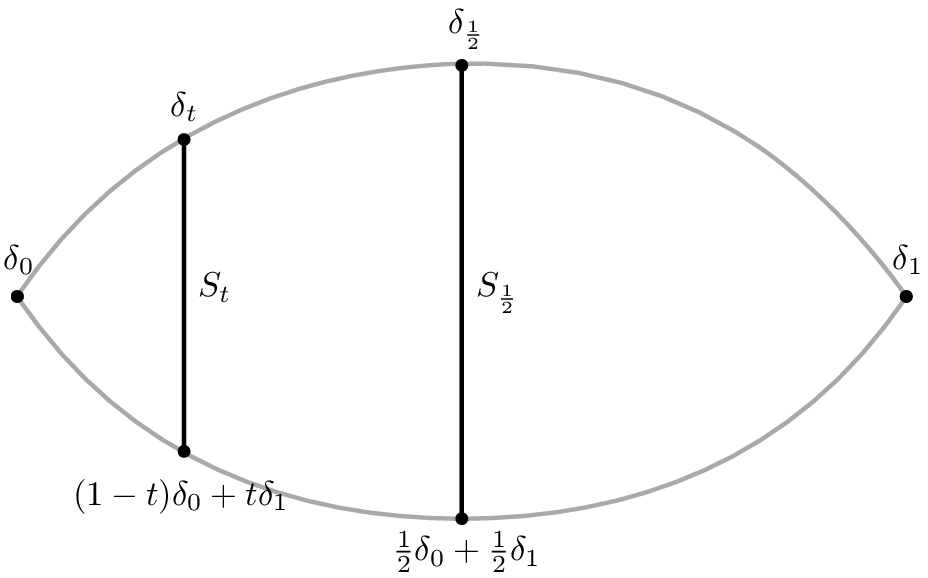}
	\caption{The shape of the Wasserstein space $\Wo\zo$.}\label{fig:shape-w1}
\end{figure}
%==================================================

\begin{claim} \label{claim:identity}
Let $\varphi\colon\Wo\zo\to\Wo\zo$ be an isometric embedding such that $\varphi(\delta_0)=\delta_0$ and $\varphi\left(\delta_{{\frac{1}{2}}}\right)=\delta_{{\frac{1}{2}}}$. Then $\varphi(\mu)=\mu$ for all $\mu\in\Wo\zo$.
\end{claim}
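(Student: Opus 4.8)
The plan is to show first that $\varphi$ fixes every Dirac mass, and then to exploit the fact that a measure on $[0,1]$ is completely determined by its $\dwo$-distances to the Dirac masses; once both are in hand, $\varphi=\id$ follows at once.

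First I would pin down $\varphi$ on $\delta_0$ and $\delta_1$. Since $\dwo(\delta_0,\mu)=\int_{[0,1]}x\,\dd\mu(x)$, the slice $S_1$ equals $\{\delta_1\}$ (and $S_0=\{\delta_0\}$). From $\varphi(\delta_0)=\delta_0$ and the isometry property we get $\varphi(S_t)\subseteq S_t$ for every $t$, so $\varphi(\delta_1)\in S_1=\{\delta_1\}$.

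Next I would treat a general Dirac mass $\delta_t$ with $t\in(0,1)$. The restriction $\varphi|_{S_t}$ is an isometric embedding of $S_t$ into itself, hence it maps any pair realizing $\diam(S_t)=2t(1-t)$ to such a pair; by Claim \ref{claim:s-t-max-dist} this pair is unique, namely $\{\delta_t,(1-t)\delta_0+t\delta_1\}$, so by injectivity $\varphi$ restricts to a bijection of this two-element set. To rule out the swap I would compare distances to $\delta_{1/2}$: using \eqref{eq:vall-central} one finds $\dwo(\delta_{1/2},\delta_t)=|t-\tfrac12|$ whereas $\dwo(\delta_{1/2},(1-t)\delta_0+t\delta_1)=\tfrac12$, and these differ for every $t\in(0,1)\setminus\{\tfrac12\}$. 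Since $\varphi$ fixes $\delta_{1/2}$ and preserves distances, it cannot exchange two points lying at different distances from $\delta_{1/2}$, so $\varphi(\delta_t)=\delta_t$ (and also $\varphi\big((1-t)\delta_0+t\delta_1\big)=(1-t)\delta_0+t\delta_1$) for all such $t$. The value $t=\tfrac12$ is covered directly by the hypothesis $\varphi(\delta_{1/2})=\delta_{1/2}$. Together with the previous step this gives $\varphi(\delta_t)=\delta_t$ for every $t\in[0,1]$.

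Finally, for arbitrary $\mu\in\Wo\zo$ I would use that $g_\mu(t):=\dwo(\mu,\delta_t)=\int_{[0,1]}|x-t|\,\dd\mu(x)$ is convex with right-hand derivative $g_\mu'(t+)=\mu([0,t])-\mu((t,1])=2F_\mu(t)-1$ for $t\in[0,1)$, so $g_\mu$ determines $F_\mu$ and hence $\mu$. Applying $\varphi$ and using that every $\delta_t$ is fixed, $g_{\varphi(\mu)}(t)=\dwo(\varphi(\mu),\varphi(\delta_t))=\dwo(\mu,\delta_t)=g_\mu(t)$ for all $t$, whence $F_{\varphi(\mu)}=F_\mu$, i.e. $\varphi(\mu)=\mu$. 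I expect the only genuinely delicate point to be the argument that a not-necessarily-surjective isometric embedding of the slice $S_t$ still maps the diametral pair onto itself — this is precisely where the uniqueness statement in Claim \ref{claim:s-t-max-dist} is used — together with the small bookkeeping needed to eliminate the residual two-fold ambiguity; the rest is routine.
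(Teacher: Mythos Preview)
Your proof is correct and follows essentially the same route as the paper: use Claim~\ref{claim:s-t-max-dist} to see that $\varphi$ permutes the unique diametral pair $\{\delta_t,(1-t)\delta_0+t\delta_1\}$ in each slice $S_t$, break the two-fold ambiguity by comparing distances to the fixed point $\delta_{1/2}$, and then recover $F_\mu$ from the right derivative of $t\mapsto\dwo(\mu,\delta_t)$. The only cosmetic differences are that the paper handles $\varphi(\delta_1)=\delta_1$ as the $t=1$ case of the diametral-pair argument rather than via $S_1=\{\delta_1\}$, and that the strict inequality $|t-\tfrac12|<\tfrac12$ in fact covers all $t\in(0,1)$ at once, so your separate treatment of $t=\tfrac12$ is not needed.
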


\begin{proof} Using Claim \ref{claim:s-t-max-dist}, we obtain 
$$
	\{\varphi\ler{(1-t) \delta_0+t\delta_1},\varphi\ler{\delta_t}\}=\{(1-t) \delta_0+t\delta_1,\delta_t\} \quad (t\in[0,1]).
$$
In particular, $\varphi\ler{\delta_1}=\delta_1$.
As for all $0<t<1$ we have 
$$
\dwo\ler{\delta_t, \delta_{\frac{1}{2}}}=\abs{t-\tfrac{1}{2}}<\tfrac{1}{2}
\;\;\;\text{and}\;\;\; 
\dwo \ler{(1-t) \delta_0+t\delta_1, \delta_{\frac{1}{2}}}=\tfrac{1}{2},
$$
we get that $\varphi\ler{\delta_t}=\delta_t$. 
Therefore it is enough to show that any measure $\mu \in \Wo\zo$ is completely determined by its distances from Dirac masses. 
This can be seen in the following way: by \eqref{eq:vall-central} we have
\be\label{eq:step2helyett1}
	\dwo\ler{\mu, \delta_t}=\int_0^t F_\mu(x) ~\dx + \int_t^1 \ler{1-F_\mu(x)} ~\dx \quad (t\in [0,1]),
\ee
hence
\be\label{eq:step2helyett2}
	\lim_{h \searrow 0} \frac{\dwo\ler{\mu, \delta_{t+h}}-\dwo\ler{\mu, \delta_t}}{h}=
	\lim_{h \searrow 0} \frac{1}{h}\int_t^{t+h} \ler{2 F_\mu(x)- 1} ~\dx
	=2 F_\mu(t)-1
\ee
holds for all $t \in [0,1)$. 
The proof is done.
\end{proof}

Now we are in the position to present the main result of this subsection.

\begin{theorem} \label{thm:main}
Let $\varphi\in\isemb(\Wo\zo)$, that is,
\bes
\dwo\ler{\varphi(\mu),\varphi(\nu)}=\dwo\ler{\mu, \nu}\qquad(\mu,\nu\in\Wo\zo).
\ees
Then $\varphi\in\{\id_{\Wo\zo},r_{\#},j,r_{\#} j\}$, where $ r_{\#} j= j r_{\#}$.
Consequently, every isometric embedding is surjective, that is,
\bes
\isemb(\Wo\zo)=\isom(\Wo\zo).
\ees
Moreover, this isometry group is isomorphic to the Klein group $C_2\times C_2$.
\end{theorem}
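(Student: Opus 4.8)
The plan is to reduce the classification of an arbitrary $\varphi\in\isemb(\Wo\zo)$ to the situation already handled in Claim \ref{claim:identity}, by composing $\varphi$ with suitable elements of the Klein group generated by $r_\#$ and $j$ so that the result fixes both $\delta_0$ and $\delta_{1/2}$. First I would record the extremal property of the diameter noted before Claim \ref{claim:s-t-max-dist}: since $\diam(\Wo\zo)=1$ and this value is attained only by the pair $\{\delta_0,\delta_1\}$, every isometric embedding must satisfy $\{\varphi(\delta_0),\varphi(\delta_1)\}=\{\delta_0,\delta_1\}$. Thus, after possibly composing $\varphi$ with the reflection $r_\#$ (which swaps $\delta_0$ and $\delta_1$), we may assume $\varphi(\delta_0)=\delta_0$ and $\varphi(\delta_1)=\delta_1$.

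Next I would locate the image of $\delta_{1/2}$. Under the normalisation $\varphi(\delta_0)=\delta_0$, Claim \ref{claim:s-t-max-dist} applied with $t=\tfrac12$ gives $\{\varphi(\tfrac12\delta_0+\tfrac12\delta_1),\varphi(\delta_{1/2})\}=\{\tfrac12\delta_0+\tfrac12\delta_1,\delta_{1/2}\}$, so $\varphi(\delta_{1/2})$ is one of these two measures. If $\varphi(\delta_{1/2})=\delta_{1/2}$, then Claim \ref{claim:identity} forces $\varphi=\id_{\Wo\zo}$. If instead $\varphi(\delta_{1/2})=\tfrac12\delta_0+\tfrac12\delta_1$, I would compose with the flip operation $j$: recall $j(\delta_t)=t\delta_0+(1-t)\delta_1$, so $j$ interchanges $\delta_{1/2}$ and $\tfrac12\delta_0+\tfrac12\delta_1$, while $j(\delta_0)=\delta_1$ and $j(\delta_1)=\delta_0$. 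Hence I should first compose with $r_\#$ to restore $\delta_0\mapsto\delta_0$ if needed; a short case check shows that an appropriate element of $\{\id,r_\#,j,r_\# j\}$ applied after $\varphi$ yields a map fixing both $\delta_0$ and $\delta_{1/2}$, which by Claim \ref{claim:identity} is the identity. Therefore $\varphi$ itself lies in $\{\id_{\Wo\zo},r_\#,j,r_\# j\}$.

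It remains to verify the group-theoretic bookkeeping: that $r_\#$ and $j$ are commuting involutions with $r_\# j=j r_\#\neq\id$, so that the four listed maps form a subgroup isomorphic to $C_2\times C_2$. Both $r_\#$ and $j$ are manifestly involutions ($r$ is an involution of $[0,1]$, and $F_{j(j(\mu))}=(F_\mu^{-1})^{-1}=F_\mu$ since taking the right-continuous generalized inverse twice returns a cumulative distribution function to itself); commutativity can be checked on distribution functions, using that $F_{r_\#\mu}(x)=1-F_\mu((1-x)-)$ and that inversion intertwines this with the analogous operation on quantile functions, and non-triviality of $r_\# j$ follows e.g. by evaluating on $\delta_0$. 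Since the four maps are pairwise distinct (compare their values on $\delta_0$ and $\delta_{1/2}$) and closed under composition, $\isemb(\Wo\zo)=\isom(\Wo\zo)\cong C_2\times C_2$. The main obstacle I anticipate is not any single deep step but the care needed in the case analysis of the previous paragraph — tracking how the composition $r_\#$, $j$, $r_\# j$ acts on the distinguished pair $\{\delta_0,\delta_{1/2}\}$ through the two-valued ambiguity coming from Claim \ref{claim:s-t-max-dist}, so that exactly one of the four post-compositions lands in the hypothesis of Claim \ref{claim:identity}.
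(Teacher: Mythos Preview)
Your proposal is correct and follows essentially the same approach as the paper: use the diameter characterisation and Claim~\ref{claim:s-t-max-dist} to pin down the possible images of $\delta_0$ and $\delta_{1/2}$, then compose with suitable elements of $\{\id,r_\#,j,r_\# j\}$ to reduce to Claim~\ref{claim:identity}. The only cosmetic difference is that the paper runs a direct four-case split on the pair $(\varphi(\delta_0),\varphi(\delta_{1/2}))$ rather than first normalising with $r_\#$ and then splitting into two cases, which avoids the bookkeeping you flag as the main obstacle; your added verification of the Klein-group structure is a little more explicit than the paper's.
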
 

\begin{proof} 
Clearly, $\varphi(\delta_0)\in\{\delta_0,\delta_1\}$ and it follows from Claim \ref{claim:s-t-max-dist} that 
$$
\varphi\ler{\delta_{{\frac{1}{2}}}} \in \left\{\delta_{{\frac{1}{2}}},{\tfrac{1}{2}}\delta_0+{\tfrac{1}{2}}\delta_1\right\}.
$$ 
Therefore we have four cases to check.
If $\varphi\ler{\delta_0}=\delta_0$ and $\varphi\ler{\delta_{\frac{1}{2}}}=\delta_{\frac{1}{2}}$, then by Claim \ref{claim:identity} $\varphi=\id_{\Wo\zo}$. 
Next, if $\varphi\ler{\delta_0}=\delta_1$ and $\varphi\ler{\delta_{\frac{1}{2}}}=\delta_{\frac{1}{2}}$, then the isometric embedding $r_{\#} \varphi$ sends $\delta_0$ to $\delta_0$ and $\delta_{\frac{1}{2}}$ to $\delta_{\frac{1}{2}}$. 
Consequently, $r_{\#} \varphi=\id_{\Wo\zo}$ and $\varphi=r_{\#}$. Similarly, if $\varphi\ler{\delta_0}=\delta_1$ and $\varphi\ler{\delta_{\frac{1}{2}}}=\frac{1}{2} \delta_{0}+\frac{1}{2} \delta_{1}$, then $j \varphi$ leaves $\delta_0$ and $\delta_{\frac{1}{2}}$ invariant, which implies $\varphi=j$.

\begin{figure}[H]
\centering
	\includegraphics[scale=0.6]{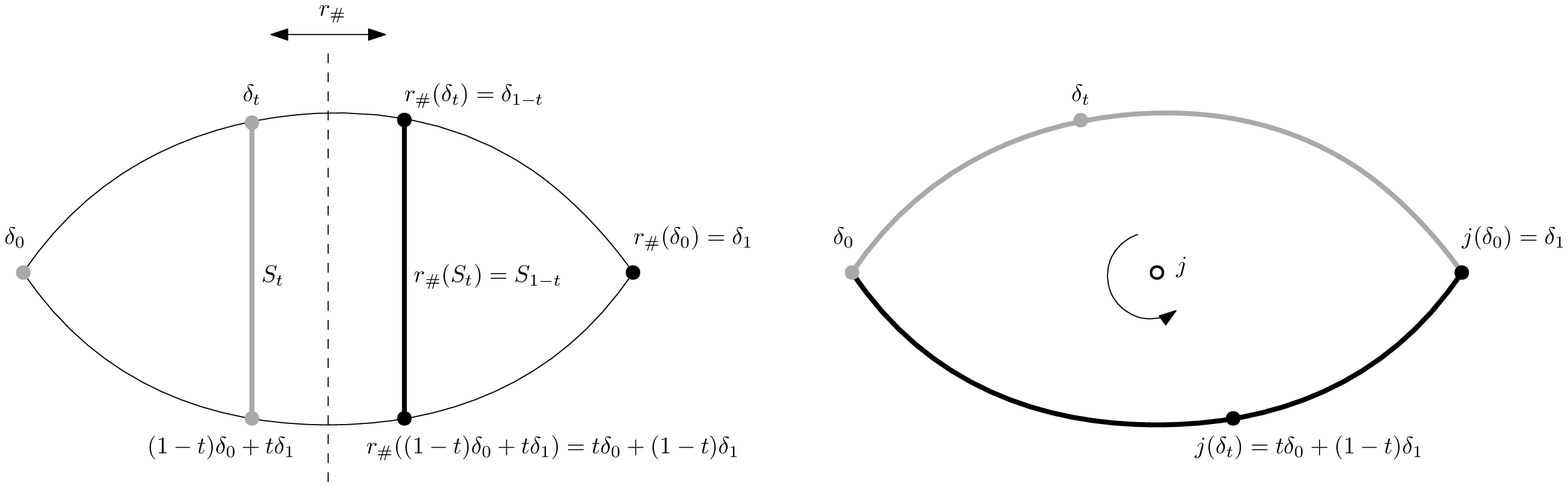}

\caption{{The action of $r_{\#}$ and $j$ on $\Wo\zo$, cf. Figure \ref{fig:shape-w1}.}}
\label{fig:j-action-w1}
\end{figure}

Finally, if $\varphi\ler{\delta_0}=\delta_0$ and $\varphi\ler{\delta_{\frac{1}{2}}}=\frac{1}{2} \delta_{0}+\frac{1}{2} \delta_{1}$, then $r_{\#} j \varphi$ and $j r_{\#} \varphi$ are isometric embeddings leaving both $\delta_0$ and $\delta_{\frac{1}{2}}$ invariant. 
Therefore $\varphi=j r_{\#}=r_{\#} j$. 
\end{proof}

We close this subsection by noting that the metric structure of $\Wo ([a,b])$ is similar to that of $\Wo\zo$ for all $a<b$. Consequently, our method works for any compact interval.
Consider the function 
$\lambda_{a,b}\colon [0,1] \to [a,b], \; \lambda_{a,b}(t) = a+(b-a)t$ and the following push-forward bijection:
$\xi_{a,b}\colon \Wo\zo \to \Wo([a,b]), \; \xi_{a,b}(\mu) = {\lambda_{a,b}}_\#(\mu)$.
Notice that $\varphi\in\mathrm{IsEmb}\ler{\Wo([a,b])}$ holds if and only if 
$(\xi_{a,b})^{-1}\circ\varphi\circ\xi_{a,b} \in \mathrm{IsEmb}\ler{\Wo\zo}$, so $$\mathrm{IsEmb}\ler{\Wo([a,b])} = \mathrm{Isom}\ler{\Wo([a,b])} = C_2\times C_2.$$
In the next subsection we continue by describing all isometric embeddings of the Wasserstein space $\Wp\zo$ for parameters $p>1$.

\subsection{$p>1$ -- Isometric rigidity}
\label{s:wpzo}

{Similarly to the case of $p=1$, it turns out that every isometric embedding is surjective.
However, in contrast to the case $p=1$, we prove \emph{isometric rigidity}, that is, $\isom(\Wp\zo)$ is isomorphic to $\isom\zo=C_2$ for $p>1$.
The main result of this subsection reads as follows.}
{
\begin{theorem} \label{thm:main-2}
Let $p>1$ and let $\varphi\in\isemb(\Wp\zo)$, that is,
\bes
\dwp\ler{\varphi(\mu),\varphi(\nu)}=\dwp\ler{\mu, \nu}\qquad(\mu, \nu \in \Wp\zo).
\ees
Then we have the following two possibilities: 
either $\varphi=\id_{\Wp\zo}$, or $\, \varphi=r_{\#}$.
Consequently, every isometric embedding is surjective, that is,
\bes
\mathrm{IsEmb}\ler{\Wp\zo}=\mathrm{Isom}\ler{\Wp\zo}=C_2.
\ees
\end{theorem}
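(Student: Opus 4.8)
The plan is to transfer everything to quantile functions and lean on the strict convexity of $L^p$. By \eqref{eq:wp-tav}, the map $\mu\mapsto F_\mu^{-1}$ identifies $\Wp\zo$ isometrically with the closed convex set $M\subseteq L^p(0,1)$ consisting of the (a.e.\ equivalence classes of) monotone non-decreasing functions $[0,1]\to[0,1]$. As a first step I would dispose of the Dirac masses exactly as in the paragraph preceding Claim \ref{claim:s-t-max-dist}: one checks $\diam\ler{\Wp\zo}=1$ and that this value is attained only by the pair $\{\delta_0,\delta_1\}$ (equality in $\int_0^1\abs{F_\mu^{-1}-F_\nu^{-1}}^p\le 1$ forces $\{F_\mu^{-1},F_\nu^{-1}\}=\{\mathbf 0,\mathbf 1\}$ by monotonicity, i.e.\ $\{\mu,\nu\}=\{\delta_0,\delta_1\}$). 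Hence $\{\varphi(\delta_0),\varphi(\delta_1)\}=\{\delta_0,\delta_1\}$, and after composing with $r_\#$ if necessary I may assume $\varphi(\delta_0)=\delta_0$ and $\varphi(\delta_1)=\delta_1$; it then suffices to prove $\varphi=\id_{\Wp\zo}$.

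Next I would observe that the fixed-point set is convex. Since $p>1$, $L^p(0,1)$ is strictly convex, so $\Wp\zo\cong M$ is uniquely geodesic, the geodesic from $\mu$ to $\nu$ being $\theta\mapsto$ the measure with quantile function $(1-\theta)F_\mu^{-1}+\theta F_\nu^{-1}$. An isometric embedding sends constant-speed geodesics to constant-speed geodesics, hence it fixes the whole (unique) geodesic joining any two of its fixed points; thus $\mathrm{Fix}(\varphi)$ is a convex, closed subset of $\Wp\zo$. Applied to $\delta_0,\delta_1$, joined by the geodesic $t\mapsto\delta_t$, this already gives $\varphi(\delta_t)=\delta_t$ for all $t\in[0,1]$.

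The key extremal property then singles out the two-point measures carried by $\{0,1\}$ in one line. For $q=F_\mu^{-1}$ one has $\dwp(\delta_0,\mu)^p+\dwp(\delta_1,\mu)^p=\int_0^1\ler{q^p+(1-q)^p}\dx$, and since $r\mapsto r^p+(1-r)^p$ is convex on $[0,1]$ with maximum value $1$, attained only at $r\in\{0,1\}$, this sum is $\le 1$ with equality precisely when $q\in\{0,1\}$ a.e., i.e.\ (using monotonicity of $q$) when $q=\mathbf 1_{[a,1]}$ for some $a$, i.e.\ when $\mu=m_a:=a\delta_0+(1-a)\delta_1$. As $\varphi$ fixes $\delta_0$ and $\delta_1$ it preserves the quantity $\dwp(\delta_0,\cdot)^p+\dwp(\delta_1,\cdot)^p$, hence maps $\{m_a:a\in[0,1]\}$ into itself; and since $a\mapsto\dwp(\delta_0,m_a)^p=1-a$ is injective, necessarily $\varphi(m_a)=m_a$ for every $a\in[0,1]$. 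Finally $\mathrm{Fix}(\varphi)$ is closed and convex and contains all $\delta_t$ and all $m_a$, so after passing to quantile functions it contains the closed convex hull in $L^p$ of $\{\mathbf 1_{[a,1]}:a\in[0,1]\}$; finite convex combinations of these indicators are exactly the cumulative distribution functions of finitely supported probability measures, and every element of $M$ is an $L^p$-limit of such step functions (equivalently: the sets $\mathcal D_N$ of measures carried by $2^N$ equidistributed points exhaust $\Wp\zo$). Hence $\mathrm{Fix}(\varphi)=\Wp\zo$, i.e.\ $\varphi=\id_{\Wp\zo}$; undoing the normalization, $\varphi\in\{\id_{\Wp\zo},r_\#\}$, so every isometric embedding is surjective and $\isemb\ler{\Wp\zo}=\isom\ler{\Wp\zo}=\{\id_{\Wp\zo},r_\#\}\cong C_2$.

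In this route I do not anticipate a serious obstacle; the two places needing care are the bookkeeping between the two distinct convex structures in play (convex combinations of measures versus of quantile functions) and the verification that the closed convex hull of the indicators $\mathbf 1_{[a,1]}$ is all of $M$ — it is here, rather than through a Harris-type inequality as in Claim \ref{claim:s-t-max-dist}, that the compactness of $[0,1]$ enters decisively. (Should one prefer to stay closer to the inductive strategy announced in the introduction, the genuinely technical step becomes the $L^p$-analogue of the computation in Claim \ref{claim:s-t-max-dist}, needed to show each $\mathcal D_N$ is invariant; the clean identity in the third paragraph is precisely what lets us bypass it.)
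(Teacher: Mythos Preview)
Your argument is correct and in fact cleaner than the paper's. Both proofs begin identically: the diameter argument pins down $\{\delta_0,\delta_1\}$, and strict convexity of $L^p$ for $p>1$ makes $\Wp\zo$ uniquely geodesic, so $\mathrm{Fix}(\varphi)$ is closed and convex (this is exactly the displacement-interpolation remark the paper makes right after stating the theorem). The divergence is in how one gets the two-point measures $m_a=a\delta_0+(1-a)\delta_1$ into $\mathrm{Fix}(\varphi)$. The paper proceeds inductively: it introduces the sets $M_n$ (equi-weighted $2^n$-point measures) and $Q_n$ (certain dyadic $m_a$'s), proves in Claim~\ref{claim:int-1} that $Q_n$ is metrically characterised as the set of points maximally distant from $M_n$, and bootstraps $M_n\Rightarrow Q_n\Rightarrow M_{n+1}=\co(M_n\cup Q_n)$. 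Your observation that $\dwp(\delta_0,\mu)^p+\dwp(\delta_1,\mu)^p=\int_0^1\bigl(q^p+(1-q)^p\bigr)\le 1$ with equality iff $q\in\{0,1\}$ a.e.\ short-circuits this entire induction: it characterises \emph{all} the $m_a$ at once from the two fixed points $\delta_0,\delta_1$, after which the closed convex hull of $\{\mathbf 1_{[a,1]}\}$ being all of $M$ finishes the job. What the paper's route buys is a finer stratification of $\Wp\zo$ (the explicit farthest-point description of $Q_n$ relative to $M_n$), potentially useful elsewhere; what your route buys is brevity and the avoidance of the somewhat delicate optimisation in Claim~\ref{claim:int-1}. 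The two minor points you flag yourself are harmless: the convexity in play is always that of quantile functions, and density of non-decreasing step functions in $M\subset L^p$ is elementary.
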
}

{Note that \eqref{eq:wp-tav} and the strict convexity of the $L^p$-norm for $p>1$ implies the following: for any $\mu, \nu \in \Wp\zo$ and $s \in [0,1]$ there exists a unique measure $\gamma_{\mu, \nu}(s) \in \Wp\zo$ such that
\bes
\dwp\ler{\mu, \gamma_{\mu, \nu}(s)}= s \cdot \dwp\ler{\mu, \nu} \;\; \text{and}\;\;\; \dwp\ler{\gamma_{\mu, \nu}(s), \nu}= (1-s) \cdot \dwp\ler{\mu, \nu},
\ees
moreover, $\gamma_{\mu, \nu}(s)$ is defined by the equation
\bes
F_{\gamma_{\mu, \nu}(s)}^{-1}=(1-s) F_{\mu}^{-1}+s F_{\nu}^{-1} \qquad \ler{s \in [0,1]}.
\ees
This is an instance of displacement interpolation (see \cite[Part I. Section 7]{villani-book}), the curve $\gamma_{\mu,\nu}$ is a constant speed geodesic \cite[Chapter 7, (7.2.8)]{AGS}.} {Consequently, for any isometric embedding $\varphi\colon \, \Wp\zo \rightarrow \Wp\zo$ we have the following compatibility equation:
\bes
\varphi\ler{\gamma_{\mu, \nu}(s)}=\gamma_{\varphi\ler{\mu},\varphi\ler{\nu}}(s) \qquad \ler{\mu, \nu \in \Wp\zo, s \in [0,1]}.
\ees
In particular, if $\varphi$ leaves $\mu$ and $\nu$ fixed, then it leaves $\gamma_{\mu,\nu}(s)$ fixed for all $s \in [0,1]$.
We define the \emph{convex hull} $\co(S)$ of a set $S \subseteq \Wp\zo$ as the \emph{closure} of the set of all measures with quantile functions of the form
\bes %\label{eq:gch-def}
\sum_{j=1}^N \alpha_j F_{\nu_j}^{-1}, \qquad \ler{N \in \N, N\geq1, \, \{(\nu_j,\alpha_j)\}_{j=1}^N\subseteq S\times (0,1],\,\sum_{j=1}^N \alpha_j=1}.
\ees
In other words, $\co(S)$ is the set of those measures whose quantile functions belong to the $L^p$-closed convex hull of quantile functions of measures in $S$.
Now we can generalize the above remark: if an isometric embedding leaves every element of $S$ invariant, then it leaves every element of $\co(S)$ fixed, as well.}
\par
{
Let $M_{-1}:=\lers{\delta_0, \delta_1}$ and $Q_{-1}:=\emptyset.$ Let us introduce
\be \label{eq:qn-def}
Q_n:=\lers{\frac{2k-1}{2^{n+1}} \delta_0+\ler{1-\frac{2k-1}{2^{n+1}}}\delta_1 \, \middle| \, k \in \lers{1, \dots, 2^n}}
\ee
and
\be \label{eq:mn-def}
M_n:=\lers{\frac{1}{2^n} \sum_{j=1}^{2^n} \delta_{a_j} \, \middle| \, 0\leq a_1 \leq \dots \leq a_{2^n}\leq 1}
\ee
for every $n \in \N.$
Note that $M_0=\co\ler{M_{-1} \cup Q_{-1}}=\Delta\zo,$ and it is easy to see by considering the quantile functions that
\be \label{eq:wpzo-step}
M_n=\co \ler{M_{n-1} \cup Q_{n-1}}
\ee
holds for every $n \in \N$ (see Figure \ref{fig:Wpfig}). Indeed, the $n=0$ case is clear, for the $n=1$ case note that $Q_0=\lers{\fel \delta_0+\fel \delta_1},$ and
$$
F^{-1}_{\fel \delta_{a_1}+\fel \delta_{a_2}}
=a_1 F^{-1}_{\delta_1} + \ler{a_2-a_1} F^{-1}_{\fel \delta_0+\fel \delta_1}+(1-a_2) F^{-1}_{\delta_0}.
$$
For general $n \in \N,$ with the convention $a_0:=0$ and $a_{2^n+1}:=1,$ we have
$$
F^{-1}_{\frac{1}{2^n} \sum_{j=1}^{2^n} \delta_{a_j}}
=\sum_{j=0}^{2^n}\ler{a_{j+1}-a_j}F^{-1}_{\frac{j}{2^n} \delta_0 + \ler{1-\frac{j}{2^n}} \delta_1}.
$$
Moreover, we will see that for any $n \in \N,$ the set $Q_n$ is exactly the collection of those measures which are \emph{as far from $M_n$ as possible}. To make this statement precise, we introduce the notation for any measure $\mu \in \Wp\zo$ and any nonempty set $H\subseteq\Wp\zo$
\bes
\dist \ler{\mu,H}:=\inf \big\{d_{\Wp} \ler{\mu, \nu} \, \big| \, \nu \in H\big\}.
\ees
The following Claim regarding the metric structure of $\Wp\zo$ is a key ingredient of the proof of Theorem \ref{thm:main-2}.}
{
\begin{claim} \label{claim:int-1}
For every measure $\mu\in\Wp\zo$ we have 
\be \label{eq:dist-m-n}
\dist \ler{\mu,M_n} \leq \ler{\frac{1}{2}}^{1+\frac{n}{p}},
\ee
and $\dist\ler{\mu,M_n} = \ler{\frac{1}{2}}^{1+\frac{n}{p}}$, if and only if $\mu \in Q_n.$
\begin{figure}[H]
\centering
	\includegraphics[scale=0.6]{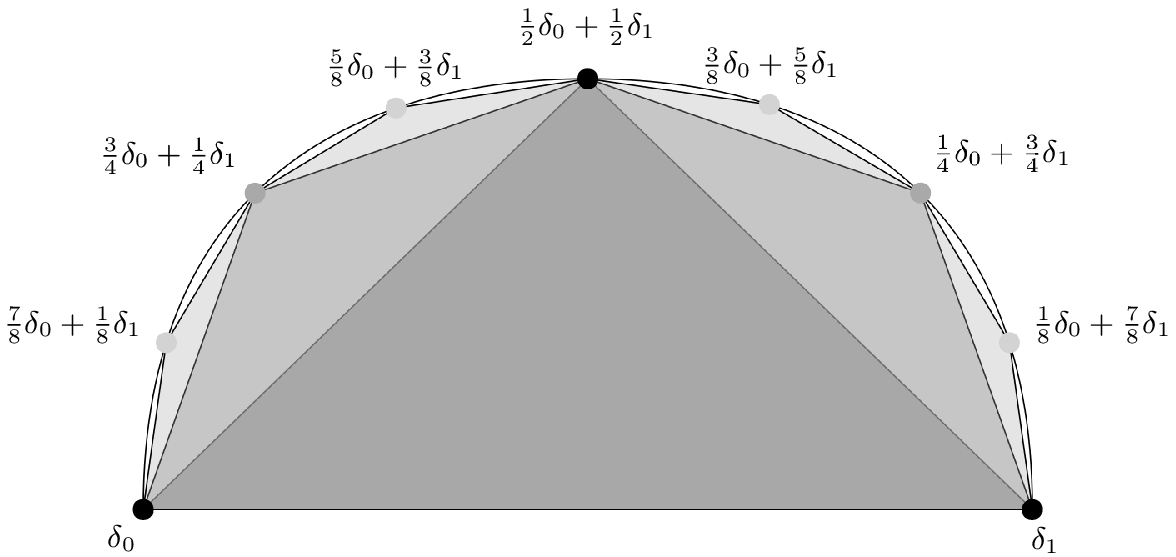}
	\caption{Schematic picture of $\Wp\zo.$}
\label{fig:Wpfig}
\end{figure}
\end{claim}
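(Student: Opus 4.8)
The strategy is to translate everything to quantile functions via \eqref{eq:wp-tav} and to argue by induction on $n$, using the geometry of $L^p$-balls. For the base case $n=0$ we must show that every $\mu\in\Wp\zo$ satisfies $\dist(\mu,\Delta\zo)\le\tfrac12$ with equality exactly for $\mu\in Q_0=\{\tfrac12\delta_0+\tfrac12\delta_1\}$. Since $F_{\delta_t}^{-1}\equiv t$, we have $\dist(\mu,\Delta\zo)^p=\inf_{t\in[0,1]}\int_0^1|F_\mu^{-1}(x)-t|^p\,\dx$; the minimizer $t$ exists by continuity and compactness, and a direct estimate using $|F_\mu^{-1}(x)-t|\le\max\{t,1-t\}$ gives $\dist(\mu,\Delta\zo)^p\le\bigl(\tfrac12\bigr)^p$, with equality forcing $F_\mu^{-1}$ to be $\{0,1\}$-valued and to take each value on a set of measure $\tfrac12$, i.e.\ $\mu=\tfrac12\delta_0+\tfrac12\delta_1$. (Strict convexity of $t\mapsto|s-t|^p$ is what pins down the optimal $t$ and hence forces the extremal shape.)

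For the inductive step, assume the claim for $n-1$ and take $\mu\in\Wp\zo$. Using \eqref{eq:wpzo-step}, namely $M_n=\co(M_{n-1}\cup Q_{n-1})$, one shows the upper bound by splitting the quantile function of $\mu$ at its ``midpoint in mass'': write $[0,1]=[0,\tfrac12]\cup[\tfrac12,1]$, rescale each half to $[0,1]$, apply the inductive bound on each half to approximate the two rescaled pieces by elements of $M_{n-1}$ within $\bigl(\tfrac12\bigr)^{1+(n-1)/p}$ in $L^p([0,1])$, and then reassemble: concatenating an $M_{n-1}$-measure on $[0,\tfrac12]$ with an $M_{n-1}$-measure on $[\tfrac12,1]$ (after the appropriate affine rescalings, which are available because $M_{n-1}$ is built from quantile functions that are convex combinations of the constant functions in $M_{-1}$ and the two-step functions in $Q_{-1},\dots,Q_{n-2}$) produces an element of $M_n$, and the $L^p$-error scales by $\bigl(\tfrac12\bigr)^{1/p}$ because each half carries mass $\tfrac12$. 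This yields $\dist(\mu,M_n)\le\bigl(\tfrac12\bigr)^{1/p}\cdot\bigl(\tfrac12\bigr)^{1+(n-1)/p}=\bigl(\tfrac12\bigr)^{1+n/p}$.

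The equality case is the delicate part. Suppose $\dist(\mu,M_n)=\bigl(\tfrac12\bigr)^{1+n/p}$. The best approximant $\sigma\in M_n$ exists ($M_n$ is $L^p$-closed and the relevant quantile functions lie in a bounded, hence weakly-compact-in-$L^p$, convex set) and is unique by strict convexity. Looking at $\sigma\in M_n=\co(M_{n-1}\cup Q_{n-1})$, one argues that optimality forces the mass of $\mu$ to be ``balanced'' across the dyadic intervals determined by $Q_{n-1}$: on the part of $[0,1]$ where $F_\sigma^{-1}$ is constant (the contribution of an $M_{n-1}$-piece) the inductive equality analysis must be saturated, which recursively drives $F_\mu^{-1}$ toward a two-valued step function; tracking the mass bookkeeping through the $n$ levels of halving shows $F_\mu^{-1}$ must equal $0$ on a set of measure $\tfrac{2k-1}{2^{n+1}}$ and $1$ on the complement for some $k\in\{1,\dots,2^n\}$, i.e.\ $\mu\in Q_n$. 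Conversely, for $\mu\in Q_n$ one computes $\dist(\mu,M_n)$ directly: the quantile function $F_\mu^{-1}=\mathbf 1_{(\frac{2k-1}{2^{n+1}},1)}$ is at distance exactly $\bigl(\tfrac12\bigr)^{1+n/p}$ from $M_n$, the nearest points being realized by refining the jump into $2^n$ equal steps.

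\medskip
\noindent\textbf{Main obstacle.} The genuinely hard step is the equality characterization in the induction, i.e.\ showing that extremality propagates \emph{down} the dyadic tree and forces $\mu\in Q_n$ rather than merely some measure that is extremal ``on average.'' The upper bound is a clean rescaling-and-concatenation argument, and the converse is a computation; but extracting the rigid combinatorial conclusion $\mu\in Q_n$ from the single scalar equation $\dist(\mu,M_n)=\bigl(\tfrac12\bigr)^{1+n/p}$ requires carefully using strict convexity of the $L^p$-norm at each of the $n$ levels and showing that slack at any level would give a strictly better approximant. This is where one must be most careful about the case analysis on where $F_\sigma^{-1}$ is constant versus where it jumps.
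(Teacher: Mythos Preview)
Your plan has a genuine gap in the equality characterisation, and the inductive architecture is largely responsible for it.

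For the upper bound your splitting-and-concatenation idea can be made to work, though you skip a point that needs checking: after rescaling the two halves and picking approximants $\nu_1,\nu_2\in M_{n-1}$, you must verify that the concatenated step function is monotone (otherwise it is not the quantile function of an element of $M_n$). This is true if you use the \emph{optimal} $M_{n-1}$-approximants, because each atom of $\nu_1$ lies in the range of $g_1$ and each atom of $\nu_2$ in the range of $g_2$, and $\sup g_1\le F_\mu^{-1}(1/2)\le\inf g_2$; but it has to be said.

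The real problem is the equality case. Your recursion gives
\[
\dist(\mu,M_n)^p=\tfrac12\Bigl(\dist(g_1,M_{n-1})^p+\dist(g_2,M_{n-1})^p\Bigr),
\]
and when the left side equals $(1/2)^{p+n}$ you learn only that $\dist(g_1,M_{n-1})^p+\dist(g_2,M_{n-1})^p=(1/2)^{p+n-1}$, with each summand at most $(1/2)^{p+n-1}$. This does \emph{not} force either half to be extremal (indeed, for a genuine $\mu\in Q_n$ one half is extremal and the other has distance $0$), so you cannot invoke the inductive equality hypothesis on either piece. Your appeal to ``strict convexity at each level'' does not close this; the information lost by summing the two halves is exactly what you need. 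Note also that already in the base case your sentence ``equality forcing $F_\mu^{-1}$ to be $\{0,1\}$-valued and to take each value on a set of measure $\tfrac12$'' is not right: equality in $|F_\mu^{-1}-1/2|\le 1/2$ only gives $\mu=(1-\alpha)\delta_0+\alpha\delta_1$; pinning down $\alpha=\tfrac12$ requires the further optimisation over all Dirac masses that you relegate to a parenthesis.

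The paper avoids this entirely by \emph{not} inducting. It partitions $(0,1)$ into the $2^n$ equal intervals at once, sets $s_k:=F_\mu^{-1}(k/2^n)$, and compares $\mu$ with the explicit element $\frac{1}{2^n}\sum_k\delta_{(s_{k-1}+s_k)/2}\in M_n$. This yields the chain
\[
\dist^p(\mu,M_n)\;\le\;\frac{1}{2^n}\sum_{k=1}^{2^n}\Bigl(\frac{s_k-s_{k-1}}{2}\Bigr)^p\;\le\;\Bigl(\frac12\Bigr)^{p+n},
\]
where the last step is $\sum_k(s_k-s_{k-1})^p\le\bigl(\sum_k(s_k-s_{k-1})\bigr)^p=1$. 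The point is that \emph{this} convexity inequality is saturated iff exactly one increment $s_{k^*}-s_{k^*-1}$ equals $1$, which immediately localises everything to a single dyadic slot; the $n=0$ analysis on that slot then finishes the job. The direct $2^n$-fold split keeps all the increments in play simultaneously, which is precisely the information your binary recursion throws away.
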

}
{
\begin{proof}
Let us check the statements of Claim \ref{claim:int-1} for $n=0$ first for the sake of transparency (the left hand side of Figure \ref{fig:m2-tav-max} is intended for this case). The inequality 
\be \label{eq:dist-m1}
\dist \ler{\mu,M_0} \leq {\tfrac{1}{2}}.
\ee
easily  follows from the fact that
\be \label{eq:up-bound}
\dwpp\ler{\mu, \delta_{{\frac{1}{2}}}}=\int_{[0,1]}\abs{x-{\tfrac{1}{2}}}^p \dd \mu(x) \leq \ler{{\tfrac{1}{2}}}^p\qquad(\mu\in\Wp\zo).
\ee
To characterize the case of equality in \eqref{eq:dist-m1}, note that equality in \eqref{eq:up-bound} implies $\mu\ler{\{0,1\}}=1$. 
Consequently, if $\dwp\ler{\mu, \delta_{{\frac{1}{2}}}} = \frac{1}{2}$, then $\mu = (1-\alpha)\delta_0+\alpha \delta_1$ for some $\alpha \in [0,1]$. 
Standard one variable optimization shows that for $\alpha \in (0,1)$ we have
$$
\dist \ler{(1-\alpha)\delta_0+\alpha \delta_1,M_0}
=\dwp \ler{(1-\alpha)\delta_0+\alpha \delta_1, \delta_{t^*(\alpha)}}
$$
where
$$
t^*(\alpha)=\frac{\alpha^{\frac{1}{p-1}}}{\alpha^{\frac{1}{p-1}}+(1-\alpha)^{\frac{1}{p-1}}}.
$$
Another standard one variable optimization argument shows that the maximum of
\begin{equation*}
\begin{split}
\dist&\ler{(1-\alpha)\delta_0+\alpha \delta_1,M_0}\\
&=\ler{
(1-\alpha)\ler{\frac{\alpha^{\frac{1}{p-1}}}{\alpha^{\frac{1}{p-1}}+(1-\alpha)^{\frac{1}{p-1}}}}^p+\alpha \ler{\frac{(1-\alpha)^{\frac{1}{p-1}}}{\alpha^{\frac{1}{p-1}}+(1-\alpha)^{\frac{1}{p-1}}}}^p
}^{\frac{1}{p}}
\end{split}
\end{equation*}
is $\fel,$ and it is taken only at $\alpha=\fel.$
\begin{figure}[H]
\centering
	\includegraphics[scale=0.5]{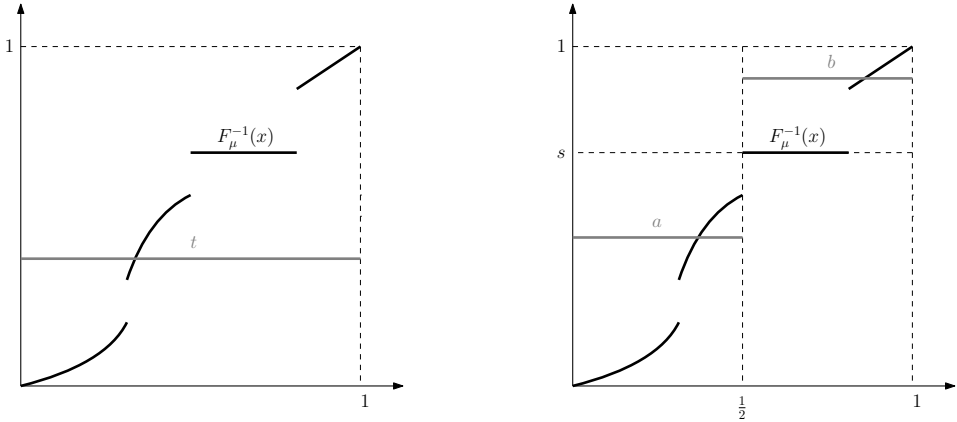}
	\caption{Illustrations for the proof of Claim \ref{claim:int-1}}
\label{fig:m2-tav-max}
\end{figure}
To check Claim \ref{claim:int-1} for any $n \in \N$ (the right hand side of Figure \ref{fig:m2-tav-max} shows the case $n=1$), let us take an arbitrary $\mu \in \Wp\zo$ and introduce $s_0:=0$ and $s_k:=F^{-1}_\mu\ler{\frac{k}{2^n}}$ for $k \in \lers{1, \dots, 2^n}.$ Then

\begin{alignat}{2}
\dist^p\ler{\mu, M_n} &\leq \dwp^p\ler{\mu, \frac{1}{2^n}\sum_{k=1}^{2^n} \delta_{\frac{s_{k-1}+s_k}{2}}}\label{eq:inf-est}\\
&=\sum_{k=1}^{2^n} \int_{\frac{k-1}{2^n}}^{\frac{k}{2^n}}
\abs{F^{-1}_\mu(x)-\frac{s_{k-1}+s_k}{2}}^p \dx
\leq \frac{1}{2^n} \sum_{k=1}^{2^n} \ler{\frac{s_k-s_{k-1}}{2}}^p \label{eq:bound-est}\\
&=\ler{\fel}^{p+n} \sum_{k=1}^{2^n} \ler{s_k-s_{k-1}}^p
\leq \ler{\fel}^{p+n}.\label{eq:conv-est}
\end{alignat}

So \eqref{eq:dist-m-n} is proved, we turn to investigate the case of equality. As $\sum_{k=1}^{2^n} s_k-s_{k-1}=1$ and $p>1$, the inequality in \eqref{eq:conv-est} is saturated if and only if $s_{k^*}-s_{k^*-1}=1$ for some $k^* \in \lers{1, \dots, 2^n}.$ However, if this is the case, the argument presented in the $n=0$ case for the interval $[0,1]$ can be rescaled and applied for the interval $\left[\frac{k^*-1}{2^n}, \frac{k^*}{2^n}\right],$ and we can deduce that \eqref{eq:bound-est} is saturated if and only if $F^{-1}_\mu(x)=0$ for $0\leq x < \fel \ler{\frac{k^*-1}{2^n}+\frac{k^*}{2^n}}=\frac{2 k^*-1}{2^{n+1}}$ and $F^{-1}_\mu(x)=1$ for $\frac{2 k^*-1}{2^{n+1}} \leq x \leq 1,$ that is,
$
\mu=\frac{2k^*-1}{2^{n+1}}\delta_0+\ler{1-\frac{2k^*-1}{2^{n+1}}}\delta_1,
$
and hence $\mu \in Q_n$ --- see \eqref{eq:qn-def}. On the other hand, it is clear that for any $\mu \in Q_n,$
\begin{equation*}
\begin{split}
\dist\ler{\mu, M_n} & =\dist\ler{\frac{2k-1}{2^{n+1}}\delta_0+\ler{1-\frac{2k-1}{2^{n+1}}}\delta_1, M_n}\\
&=\dwp \ler{\frac{2k-1}{2^{n+1}}\delta_0+\ler{1-\frac{2k-1}{2^{n+1}}}\delta_1, \frac{k-1}{2^n}\delta_0 +\frac{1}{2^n} \delta_{\fel} + \frac{2^n-k}{2^n} \delta_1}\\
&=\ler{\frac{1}{2^n}\ler{\frac{1}{2}}^p}^\frac{1}{p}
=\ler{\frac{1}{2}}^{1+\frac{n}{p}}.
\end{split}
\end{equation*}
\end{proof}
}

\begin{proof}[{Proof of Theorem \ref{thm:main-2}}] {We prove the theorem by an induction.
Similarly to the case $p=1,$ every $\varphi\in\isemb(\Wp\zo)$ satisfies $\{\varphi(\delta_0),\varphi(\delta_1)\}=\{\delta_0,\delta_1\}$.
Without loss of generality we assume from now on that
$$
\varphi(\delta_0)=\delta_0\quad\text{and}\quad \varphi(\delta_1)=\delta_1
$$ 
(otherwise we can work with $r_{\#}\varphi$).}
{
In other words, $\varphi$ leaves every element of $M_{-1}$ invariant, and hence by \eqref{eq:wpzo-step}, the same holds for every element of $M_0.$
}
\par
{
Assume that $\varphi(\mu)=\mu$ for every $\mu \in M_n.$ Then $\varphi(\mu) \in Q_n$ for every $\mu \in Q_n,$ because we have seen in Claim \ref{claim:int-1} that $Q_n$ is exactly the collection of those measures which are as far from $M_n$ as possible.
Moreover, every element of $Q_n$ is left invariant by $\varphi,$ because
$$
\dwp\ler{\delta_0, \frac{2k-1}{2^{n+1}} \delta_0+\ler{1-\frac{2k-1}{2^{n+1}}}\delta_1}=\ler{1-\frac{2k-1}{2^{n+1}}}^\frac{1}{p},
$$
and $\varphi\ler{\delta_0}=\delta_0.$ Therefore, every element of $M_{n+1}=\co\ler{M_n \cup Q_n}$ is left invariant by $\varphi.$
}
\par
{
So by the induction we obtained that for every $n \in \N$ and $\mu \in M_n$ the equality $\varphi(\mu)=\mu$ holds. Note that $\bigcup_{n \in \N} M_n$ is a dense subset of $\Wp\zo$, thus every element of $\Wp\zo$ is left invariant by an isometric embedding fixing $\delta_0$. We recall again that if $\varphi(\delta_0)\neq\delta_0$, then $r_{\#}\varphi(\delta_0)=\delta_0$, which forces $r_{\#} \varphi=\id_{\Wp\zo}$, or equivalently, $\varphi=r_{\#}$. The proof is done.}
\end{proof}

As at the end of the previous subsection, one can examine the isometric embeddings of $\Wp([a,b])$ using a map defined very similarly as $\xi_{a,b}$.
One then obtains that all isometric embeddings are bijective, and that there are only the two trivial isometries.

\section{Isometric study of $\Wp(\R)$}\label{s:III}
We have seen that the structure of $\isom\ler{\Wp\zo}$ can be different for different parameters $p$, and that $\isemb\ler{\Wp\zo} = \isom\ler{\Wp\zo}$ for all $p$.
Our next goal is to examine isometries and isometric embeddings of Wasserstein spaces over the real line.
In contrast to the interval case, here it will turn out that $\isom\ler{\Wp(\R)} \subsetneq \isemb\ler{\Wp(\R)}$. 
However, we will also see that the structure of the isometry group can be again different for different parameters $p$, and that the same holds for the semigroup $\isemb\ler{\Wp(\R)}$.
During our investigation, the parameters $p=1$ and $p=2$ have to be handled separately.

As for the $p=2$ case, we recall that the structure of $\isom\ler{\mathcal{W}_2(\R)}$ has been described by Kloeckner in \cite{Kloeckner-2010}. 
In particular, Kloeckner showed that $\mathcal{W}_2(\R)$ admits non-trivial isometries, moreover there exists a so-called exotic flow of isometries that does not even preserve the shape of measures. We will discuss this exotic flow in detail in the last subsection. 

\subsection{$p=1$ -- Isometric rigidity}
\label{s:wor}
The goal of this subsection is to describe the isometry group of $\wor$. Namely, we prove that it admits only trivial isometries. One important difference between the $p=1$ and $p>1$ cases is that the $L^1$ norm is not strictly convex. 
Consequently, in $\wor$ the optimal transport plan between measures is not unique (let alone the geodesic curve). 
Thus Kloeckner's idea of characterizing Dirac masses by means of geodesics cannot be adapted for $p=1$.
Actually, here the main difficulty is to find a metric characterization of Dirac masses.
We start with a definition. 

\begin{definition}[Metric midpoints]
For $\mu,\nu\in\wor$, $\mu\neq\nu$, the following set is called the \emph{metric midpoint set} of $\mu$ and $\nu$:
\begin{align*}
	\mmn:=\Big\{\xi\in\wor\,\Big|\, \dwo(\mu,\xi)=\dwo(\xi,\nu) = \tfrac{1}{2}\dwo(\mu,\nu)\Big\}.
\end{align*}
\end{definition}

We continue with proving a metric property of the metric midpoint set, during which we will identify two special elements of $\mmn$.

\begin{claim}\label{claim:diam}
	For $\mu,\nu\in\wor$, $\mu\neq\nu$ we always have
	\be\label{eq:diamw1}
		\tfrac{1}{2}\dwo(\mu,\nu) \leq \diam\ler{\mmn} \leq \dwo(\mu,\nu).
	\ee
\end{claim}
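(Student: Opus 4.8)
The plan is to prove the two inequalities in \eqref{eq:diamw1} separately, and along the way to exhibit two concrete midpoints whose mutual distance realises the lower bound. Throughout I will work with cumulative distribution functions and use Vallender's formula \eqref{eq:vall-central}, so that $\dwo(\mu,\nu) = \int_\R |F_\mu - F_\nu|$. Write $d := \dwo(\mu,\nu)$.

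For the \emph{upper bound}, let $\xi,\eta \in M(\mu,\nu)$ be arbitrary. The triangle inequality gives $\dwo(\xi,\eta) \le \dwo(\xi,\mu) + \dwo(\mu,\eta) = \tfrac12 d + \tfrac12 d = d$, and taking the supremum over $\xi,\eta$ yields $\diam(M(\mu,\nu)) \le d$. This part is essentially immediate and I expect no difficulty; it holds in any metric space, not just $\wor$.

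For the \emph{lower bound}, the idea is to construct two particular midpoints. Consider the ``upper'' and ``lower'' envelope constructions: since $F_\mu$ and $F_\nu$ are both distribution functions, so are $F_\mu \wedge F_\nu$ and $F_\mu \vee F_\nu$ (using the notation introduced before Claim \ref{claim:s-t-max-dist}; monotonicity and right-continuity are preserved by $\min$ and $\max$, and the correct limits at $\pm\infty$ hold). From these one builds the \emph{vertical bisecting measure} $\xivmn$, whose distribution function is obtained by ``cutting the region between $F_\mu$ and $F_\nu$ in half horizontally'' — concretely one takes a distribution function $F$ with $F_\mu \wedge F_\nu \le F \le F_\mu \vee F_\nu$ and $\int (F - F_\mu\wedge F_\nu) = \int(F_\mu\vee F_\nu - F) = \tfrac12 d$, and similarly a \emph{horizontal bisecting measure} $\xihmn$ built from the quantile-function side (using the second equality in \eqref{eq:vall-central}). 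For each of these one checks directly via \eqref{eq:vall-central} that $\dwo(\mu,\xi) = \dwo(\xi,\nu) = \tfrac12 d$, so both lie in $M(\mu,\nu)$. It then remains to estimate $\dwo(\xivmn, \xihmn)$ from below by $\tfrac12 d$: this should follow because the vertical and horizontal bisectors differ from $\mu$ (say) in ``complementary'' ways — along the vertical bisector the mass of $|F_{\xivmn} - F_\mu|$ is concentrated where $F_\mu$ is the larger of $F_\mu, F_\nu$, while along the horizontal bisector it is concentrated where $F_\mu$ is the smaller — so the two discrepancies add rather than cancel, forcing $\int |F_{\xivmn} - F_{\xihmn}| \ge \tfrac12 d$.

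The main obstacle I anticipate is making the construction of $\xivmn$ and $\xihmn$ precise as genuine elements of $\wor$ (verifying integrability, i.e. finite first moment, and the marginal/distribution-function properties) and then pinning down the lower estimate $\dwo(\xivmn,\xihmn) \ge \tfrac12 d$ cleanly — the ``vertical vs.\ horizontal'' dichotomy is geometrically clear but needs a careful bookkeeping of where $F_\mu \le F_\nu$ versus $F_\mu \ge F_\nu$, possibly after reducing to the case where one of these holds on a half-line (which one can arrange by a preliminary normalisation, or handle by splitting $\R$ into the relevant regions). The upper bound and the membership $\xivmn, \xihmn \in M(\mu,\nu)$ are routine; the lower bound is where the real work lies, and it is presumably also where the two distinguished midpoints get named for later use in the subsection.
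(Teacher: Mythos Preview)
Your overall strategy matches the paper's: the upper bound is immediate from the triangle inequality, and for the lower bound one exhibits two specific midpoints whose mutual distance is at least $\tfrac12 d$. But your constructions are too loose and your heuristic for the lower estimate is not how the argument works. The paper does not take ``any $F$ between $F_\mu\wedge F_\nu$ and $F_\mu\vee F_\nu$ with the right integrals'' --- there are many such $F$, and for a generic pair the distance need not be $\ge\tfrac12 d$. Instead one first chooses $v\in\R$ and $h\in(0,1)$ so that the vertical line $x=v$ and the horizontal line $y=h$ each bisect the area of the region $\cS=\{(x,y):F_\mu(x)\wedge F_\nu(x)\le y\le F_\mu(x)\vee F_\nu(x)\}$. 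After arranging (in the non-degenerate case) that $F_\mu(v)<h<F_\nu(v-)$, the vertical bisector $\xivmn$ has CDF equal to $F_\mu$ on $(-\infty,v)$ and $F_\nu$ on $[v,\infty)$, while $\xihmn$ follows $F_\nu$, then stays constant at level $h$, then follows $F_\mu$.

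The real content is the inequality $\dwo(\xivmn,\xihmn)\ge\tfrac12 d$. Writing $\alpha_1,\alpha_2,\alpha_3,\alpha_4$ for the areas of the four pieces into which the lines $x=v$ and $y=h$ cut $\cS$, one has $\alpha_1=\alpha_3$, $\alpha_2=\alpha_4$, $\alpha_1+\alpha_2=\tfrac12 d$, and $\dwo(\xivmn,\xihmn)=\alpha_1+\alpha_3$; so everything reduces to $\alpha_1\ge\alpha_2$. This is \emph{not} obtained by your ``discrepancies add rather than cancel'' heuristic (which, as stated, does not distinguish the two bisectors correctly). The paper's device is to compare the $\alpha_i$ with the areas $\beta_i$ of the four pieces of the auxiliary rectangle $(F_\nu^{-1}(h),F_\mu^{-1}(h))\times(F_\mu(v),F_\nu(v-))$: monotonicity of the CDFs forces $\beta_1\le\alpha_1$, $\beta_3\le\alpha_3$, $\beta_2\ge\alpha_2$, $\beta_4\ge\alpha_4$, while for a rectangle trivially $\beta_1\beta_3=\beta_2\beta_4$, and these together give $\alpha_1\ge\alpha_2$. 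You also miss a necessary case split: when $\alpha_2=\alpha_4=0$ the above construction of $\xihmn$ may fail to be well-defined, and the paper instead builds two midpoints (by swapping $F_\mu$ and $F_\nu$ at $v$ in both orders) whose distance is exactly $d$.
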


\begin{proof}
	The second inequality is trivial by the triangle inequality, hence we shall only focus on the first inequality.
	For the sake of brevity, we introduce the notation $D:=\dwo(\mu,\nu)\neq0$.
{By a simple geometric consideration it follows from \eqref{eq:vall-central} that $D$ is exactly the Lebesgue measure of the Borel set}
	$$
		\cS := \left\{ (x,y)\in\R\times[0,1] \,\big|\, F_\mu(x) \leq y \leq F_\nu(x) \; \text{or} \; F_\nu(x) \leq y \leq F_\mu(x) \right\}.
	$$
	Hence, there exist two numbers $h\in(0,1)$ and $v\in\R$ such that the following four sets have Lebesgue measure $\frac{D}{2}$:
	$$
		\cS \cap \left( (-\infty,v)\times [0,1] \right), \;\; \cS \cap \left( (v,\infty)\times [0,1] \right), \;\; \cS \cap \left( \R\times[0,h) \right), \;\; \cS \cap \left( \R\times(h,1] \right).
	$$
	Let us define the sets
	\begin{align*}
		\cS_1 := \cS \cap \left( (-\infty,v)\times [0,h) \right), \;\; \cS_2 := \cS \cap \left( (v,\infty)\times [0,h) \right), \\ 
		\cS_3 := \cS \cap \left( (v,\infty)\times (h,1] \right), \;\; \cS_4 := \cS \cap \left( (-\infty,v)\times (h,1] \right),
	\end{align*}
	and denote the Lebesgue measure of these sets by $\alpha_1, \alpha_2, \alpha_3$ and $\alpha_4$, respectively. 
	By the choice of $v$ and $h$ (see also Figure \ref{fig:beta}) we have 
	$$
		\alpha_1=\alpha_3, \;\; \alpha_2=\alpha_4, \;\;\text{and}\;\; 
		\alpha_1+\alpha_2 = \alpha_3+\alpha_4 = \tfrac{D}{2}.
	$$
	
	From here we consider two cases. 
	First, assume that $\alpha_2=\alpha_4=0$. 
	Suppose for a moment that we have $F_\mu(v) < h$ (the case $F_\nu(v) < h$ is handled similarly). 
	Then by right-continuity we obtain easily that $F_\mu(x) = F_\nu(x)$ for all $v\leq x < \sup \left\{ x\in\mathbb{R} \, | \, F_\mu(x) < h \right\}$. 
	Therefore by monotonicity of $F_\mu$, we can choose $h$ to be $F_\mu(v)$. 
	By doing so, we may assume without loss of generality from now on that 
	$$
	F_\mu(v) \geq h \;\;\;\text{and}\;\;\; F_\nu(v) \geq h.
	$$
	Notice that since $\alpha_4=0$, we must have 
	$$
	\ler{F_\mu\vee F_\nu}(v-) \leq \ler{F_\mu\wedge F_\nu}(v).
	$$
	Hence, the measures $\xi$ and $\eta$ defined in the following way are clearly in $\mmn$ and their distance is obviously $D$, which proves the inequality for this case:
	$$
		F_\xi(x) := \left\{
		\begin{matrix}
			F_\mu(x) & \text{if}\; x<v \\
			F_\nu(x) & \text{if}\; x\geq v \\
		\end{matrix}
		\right.
		\;\;\;\text{and}\;\;\;
		F_\eta(x) := \left\{
		\begin{matrix}
			F_\nu(x) & \text{if}\; x<v \\
			F_\mu(x) & \text{if}\; x\geq v \\
		\end{matrix}
		\right..
	$$
	
	Second, assume that $\alpha_2=\alpha_4 > 0$. Notice that we cannot have both $h \leq F_\mu(v)$ and $h \leq F_\nu(v)$, since that would imply $\cS_2 = \emptyset$. Similarly, having both $F_\mu(v-) \leq h$ and $F_\nu(v-) \leq h$ would imply $\cS_4=\emptyset$. Hence by symmetry we may assume without loss of generality that
	$$
		F_\mu(v) < h < F_\nu(v-).
	$$ 
	Now, we define two measures $\xivmn$ and $\xihmn$ with their cumulative distribution functions:
	\begin{equation}\label{eq:ver-hor1}
		F_{\xivmn}(x) := \left\{
		\begin{matrix}
			F_\mu(x) & \text{if}\; x<v \\
			F_\nu(x) & \text{if}\; x\geq v \\
		\end{matrix}
		\right.
	\end{equation}

%%%%%%%%%%%%%%%%%%%%%%%%%%%%%%%%%%%%%%%%%%
	\begin{figure}[H]
	\centering
	\includegraphics[scale = 0.7]{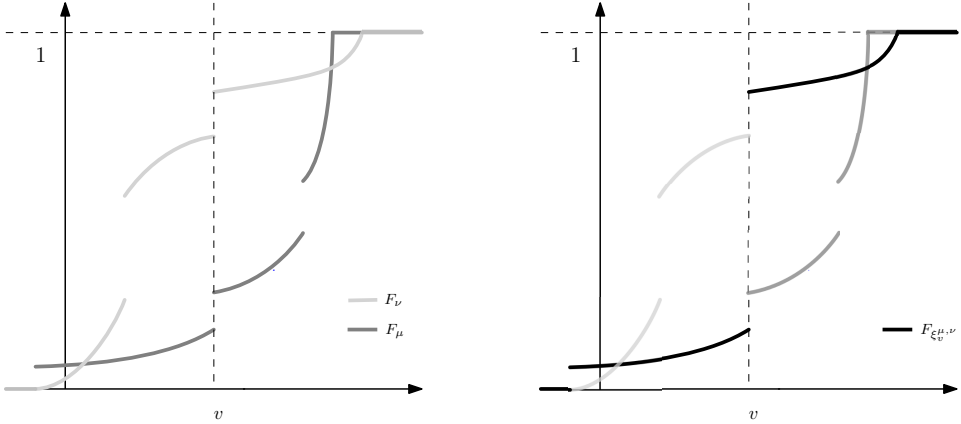}
	\caption{$F_{\xivmn}$ splits the area between $F_{\mu}$ and $F_{\nu}$ vertically.}\label{fig:vebf}
	\end{figure}
%%%%%%%%%%%%%%%%%%%%%%%%%%%%%%%%%%%%%%%%%%
	
	and
	\begin{equation}\label{eq:ver-hor2}
		F_{\xihmn}(x) := \left\{
		\begin{matrix}
			F_\nu(x) & \text{if}\; x < F_\nu^{-1}(h) \\
			h & \text{if}\; F_\nu^{-1}(h) \leq x < F_\mu^{-1}(h) \\
			F_\mu(x) & \text{if}\; x\geq F_\mu^{-1}(h) \\
		\end{matrix}
		\right..
	\end{equation}
	
%%%%%%%%%%%%%%%%%%%%%%%%%%%%%%%%%%%%%%%%%%
	\begin{figure}[H] 
	\centering
	\includegraphics[scale = 0.7]{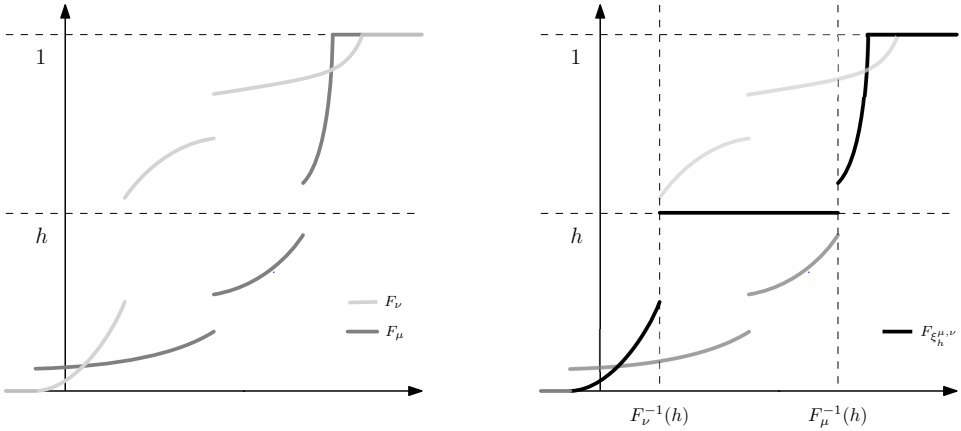}
	\caption{$F_{\xihmn}$ splits the area between $F_{\mu}$ and $F_{\nu}$ horizontally.}\label{fig:hobf}
	\end{figure}
%%%%%%%%%%%%%%%%%%%%%%%%%%%%%%%%%%%%%%%%%%
	It is obvious that $F_\nu^{-1}(h) < v < F_\mu^{-1}(h)$, $\xivmn, \xihmn\in\mmn$ and $\dwo(\xivmn,\xihmn) = \alpha_1+\alpha_3$. From here we verify that $\alpha_1=\alpha_3 \geq \alpha_2 = \alpha_4$ by the following geometric observation. 
	We consider the auxiliary rectangle $\left(F_{\nu}^{-1}(h),F_{\mu}^{-1}(h)\right)\times\left(F_{\mu}(v),F_{\nu}(v)\right)$, and split it into four parts using the horizontal and vertical lines corresponding to $h$ and $v$, respectively, see Figure \ref{fig:beta}. 
	Denoting the area of these pieces by $\beta_i$'s in accordance with $\alpha_i$'s ($1\leq i\leq 4)$, we obtain
	\begin{equation}\label{eq:beta-alpha-viszony}
		\beta_1\leq\alpha_1, \;\; \beta_2\geq\alpha_2, \;\; \beta_3\leq\alpha_3, \;\; \text{and} \;\; \beta_4\geq\alpha_4.
	\end{equation}
	But obviously, depending on $h$ we have either $\beta_4\leq\beta_1$, or $\beta_2\leq\beta_3$, which combined with the previous inequalities completes the proof.
\end{proof}

%%%%%%%%%%%%%%%%%%%%%%%%%%%%%%%%%%%%%%%%%%
	\begin{figure}[H]
	\centering
	\includegraphics[scale=0.7]{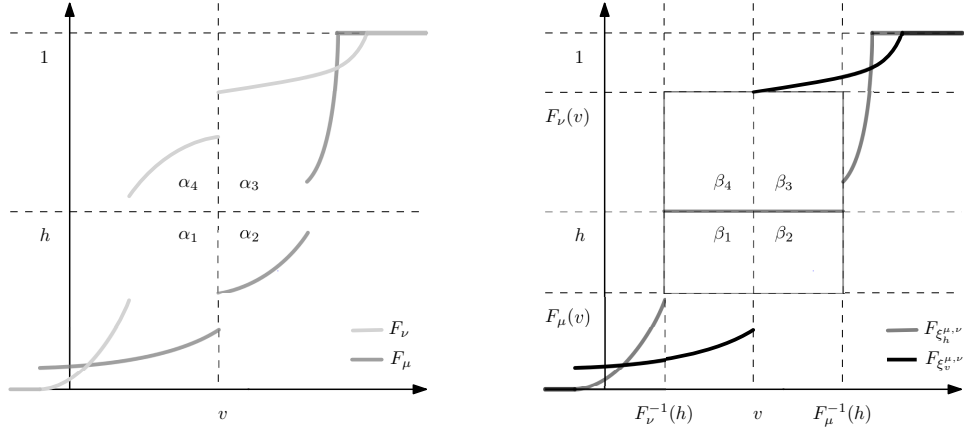}
	\caption{Partitioning the area between the graphs with vertical and horizontal lines, and the auxiliary rectangle.}\label{fig:beta}
	\end{figure}
%%%%%%%%%%%%%%%%%%%%%%%%%%%%%%%%%%%%%%%%%%

\begin{definition}[Vertical and horizontal bisecting measures]\label{def:vhbis}
If $\mu,\nu\in\wor$ are measures such that $\alpha_2=\alpha_4 > 0$ with the above defined numbers, then the measures $\xivmn$ and $\xihmn$ defined in \eqref{eq:ver-hor1}--\eqref{eq:ver-hor2} are called the \emph{vertical} and \emph{horizontal bisecting measures} of $\mu$ and $\nu$, respectively.
\end{definition}

We proceed with examining when the first inequality in \eqref{eq:diamw1} becomes an equality.

\begin{definition}[Adjacent measures]
Two different elements $\mu$ and $\nu$ of $\wor$ are said to be \emph{adjacent}, in notation $\mu\sim\nu$, if there exists an interval $(a,b)\subseteq\R$ such that
	\begin{itemize}
	\item[(1)] $\mu|_{\R\setminus\{a,b\}}=\nu|_{\R\setminus\{a,b\}}$ and
	\item[(2)] $\mu\big((a,b)\big)=\nu\big((a,b)\big)=0$.
	\end{itemize}
	Or equivalently,
	\begin{itemize}
 	\item[(1')] $F_{\mu}|_{\R\setminus[a,b)}\equiv F_{\nu}|_{\R\setminus[a,b)}$ and
        \item[(2')] both $F_{\mu}|_{[a,b)}$ and $F_{\nu}|_{[a,b)}$ are constant.
	\end{itemize}
\end{definition}

Observe that for adjacent measures we have $\alpha_2=\alpha_4>0$, hence the vertical and horizontal bisecting measures are defined by Definition \ref{def:vhbis}.

\begin{claim}\label{claim: sim characterization}
For any $\mu,\nu\in\wor, \mu\neq\nu$ the following statements are equivalent
\begin{itemize}
    \item[(i)] $\mathrm{diam}\ler\mmn=\frac{1}{2}\dwo(\mu,\nu)$,
    \item[(ii)] $\mu\sim\nu$.
\end{itemize}
Moreover, if $\mu\sim\nu$, then the diameter is attained only for the pair $\left\{\xivmn,\xihmn\right\}$.
\end{claim}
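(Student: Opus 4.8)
The strategy is to prove the two implications (ii)$\Rightarrow$(i) and (i)$\Rightarrow$(ii) separately, together with the final uniqueness statement which will come out of the analysis of (ii)$\Rightarrow$(i). The key quantitative fact to establish is that, for adjacent measures, the four numbers $\alpha_1,\alpha_2,\alpha_3,\alpha_4$ from the proof of Claim \ref{claim:diam} satisfy $\alpha_2=\alpha_4=0$ after an appropriate choice of the splitting line $v$; more precisely, for adjacent measures the set $\cS$ is ``thin'' — it lies inside a single vertical strip $[a,b)\times[0,1]$ on the interval where the two cumulative distribution functions differ, and on that strip $F_\mu$ and $F_\nu$ are both \emph{constant}, say with values $h_-<h_+$. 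This geometry is what forces $\diam(\mmn)$ down to $\tfrac12 D$.

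First I would prove (ii)$\Rightarrow$(i). Assume $\mu\sim\nu$ with witnessing interval $(a,b)$, and write $D=\dwo(\mu,\nu)$. Because $F_\mu,F_\nu$ agree off $[a,b)$ and are both constant on $[a,b)$ (values $h_-$ and $h_+$, WLOG $F_\mu\equiv h_+ > h_-\equiv F_\nu$ on $[a,b)$), the set $\cS$ is exactly $[a,b)\times[h_-,h_+)$ up to a null set, a rectangle of area $(b-a)(h_+-h_-)=D$. Any $\xi\in\mmn$ has $F_\xi$ sandwiched (in the $L^1$ sense, via \eqref{eq:vall-central}) so that it equals the common value of $F_\mu,F_\nu$ off $[a,b)$, and on $[a,b)$ it is a monotone function from $h_-$ to $h_+$ whose graph ``uses up'' exactly half the area of the rectangle on each side — i.e. $\int_a^b (F_\xi - h_-) = \int_a^b(h_+ - F_\xi) = D/2$. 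For two such $\xi,\eta$, Vallender's formula gives $\dwo(\xi,\eta)=\int_a^b |F_\xi - F_\eta| \le \int_a^b \big((F_\xi\vee F_\eta) - (F_\xi\wedge F_\eta)\big)$; since both $F_\xi$ and $F_\eta$ take values in $[h_-,h_+]$ on $[a,b)$ and each splits the rectangle in half, one shows $\int_a^b |F_\xi-F_\eta| \le D/2$ with the standard ``area below the upper minus area below the lower'' bookkeeping exactly as in \eqref{1h}--\eqref{2h}. This gives $\diam(\mmn)\le D/2$, and the reverse inequality is Claim \ref{claim:diam}; hence equality. Moreover equality in the last estimate forces $F_\xi$ and $F_\eta$ to be ``complementary'' monotone step functions within the rectangle, and tracking which monotone functions simultaneously bisect the rectangle's area and achieve the extremal $L^1$-distance pins the pair down to $\{\xivmn,\xihmn\}$ (the vertical bisector is the one jumping at $v$ where $F_\mu(v)=h_-,F_\nu(v-)=h_+$ style split, the horizontal is the plateau at height $h$ with $h(b-a)$ accounting), giving the uniqueness clause.

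Next, (i)$\Rightarrow$(ii), which I expect to be the harder direction. I would argue by contraposition: suppose $\mu\not\sim\nu$ and show $\diam(\mmn)>\tfrac12 D$. The idea is to produce two midpoint measures at distance strictly larger than $D/2$. If $\alpha_2=\alpha_4=0$ in the canonical decomposition but $\mu\not\sim\nu$, then $\cS$ is still confined to one vertical strip but one of $F_\mu, F_\nu$ is \emph{not constant} there; then one can perturb the $\xi,\eta$ constructed in the $\alpha_2=0$ case of Claim \ref{claim:diam} to gain distance — concretely, one moves a small amount of the ``switch'' of $F_\xi$ (resp. $F_\eta$) across $v$ in a region where the two CDFs genuinely separate, which increases $|F_\xi - F_\eta|$ on a set of positive measure without leaving $\mmn$ (the midpoint constraints are two scalar equations, preserved by a balanced two-sided perturbation). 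If instead $\alpha_2=\alpha_4>0$, then by \eqref{eq:beta-alpha-viszony} and the concluding inequality in the proof of Claim \ref{claim:diam} we already have $\dwo(\xivmn,\xihmn)=\alpha_1+\alpha_3 \ge \alpha_1+\alpha_2+\alpha_3+\alpha_4 - (\alpha_2+\alpha_4)\ge D/2$ with the slack being exactly $\min(\beta_4-\alpha_4\text{-type terms})$; strictness of $\diam(\mmn)>D/2$ then follows whenever $\alpha_2=\alpha_4>0$ is strict, because then at least one of $\beta_2>\alpha_2$ or $\beta_4>\alpha_4$ is a strict inequality (the auxiliary rectangle genuinely pokes outside $\cS$). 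The only way to be forced into $\diam=D/2$ is thus $\alpha_2=\alpha_4=0$ \emph{and} $F_\mu,F_\nu$ constant on the relevant strip — which is precisely adjacency.

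The main obstacle is the (i)$\Rightarrow$(ii) direction: turning ``$\mu$ and $\nu$ are not adjacent'' into an explicit pair of midpoints with distance $>D/2$ requires a careful case analysis of the geometry of $\cS$ (is it spread over several vertical strips? does a CDF vary within the strip?) and a perturbation argument showing the midpoint constraints can be maintained while strictly increasing the $L^1$ distance. I would organize this by first reducing, via the canonical $v,h$ and the sets $\cS_1,\dots,\cS_4$, to the dichotomy $\alpha_2=\alpha_4=0$ versus $>0$, handle the positive case using the $\beta_i$ comparison already set up in Claim \ref{claim:diam}, and in the zero case exploit non-constancy of a CDF on the strip to build the perturbed midpoints explicitly. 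The uniqueness statement piggybacks on the equality analysis from (ii)$\Rightarrow$(i) and needs no separate argument.
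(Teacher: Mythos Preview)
Your direction (ii)$\Rightarrow$(i), including the uniqueness clause, is essentially the paper's argument: reduce to the rectangle $[a,b)\times[h_-,h_+)$, observe that any midpoint $\xi$ has $F_\xi$ sandwiched (via saturation of the triangle inequality, so $F_\mu\wedge F_\nu\le F_\xi\le F_\mu\vee F_\nu$), and then invoke Claim~\ref{claim:s-t-max-dist} after rescaling. That part is fine.

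Your (i)$\Rightarrow$(ii) direction, however, is built on a reversed case picture. You treat $\alpha_2=\alpha_4=0$ as the ``near-adjacent'' situation and $\alpha_2=\alpha_4>0$ as the ``far-from-adjacent'' one, but it is exactly the opposite: adjacent measures always have $\alpha_2=\alpha_4>0$ (the set $\cS$ is a genuine rectangle, and the cross at $(v,h)$ cuts it into four nonzero pieces). Hence your assertion that ``$\alpha_2=\alpha_4>0$ strict forces $\diam(\mmn)>D/2$'' is false --- adjacency itself is the counterexample. Conversely, in the $\alpha_2=\alpha_4=0$ case the construction in Claim~\ref{claim:diam} already produces two midpoints at distance $D$, not $D/2$, so there is nothing to perturb; you get $\diam(\mmn)=D>D/2$ for free. (Also, $\alpha_2=\alpha_4=0$ does \emph{not} confine $\cS$ to a single vertical strip: take $\mu=\delta_0$ and $\nu$ uniform on $[-1,1]$.)

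The paper's argument for (i)$\Rightarrow$(ii) is direct rather than contrapositive and avoids all of this. From $\diam(\mmn)=D/2$ one first rules out $\alpha_2=\alpha_4=0$ (since that case gives $\diam=D$), so the $\beta_i$'s are defined. Then $D/2=\diam(\mmn)\ge\dwo(\xivmn,\xihmn)=\alpha_1+\alpha_3\ge\alpha_2+\alpha_4$ forces $\alpha_1=\alpha_2=\alpha_3=\alpha_4=D/4$. The inequalities \eqref{eq:beta-alpha-viszony} then give $\beta_1,\beta_3\le D/4\le\beta_2,\beta_4$, and the elementary rectangle identity $\beta_1\beta_3=\beta_2\beta_4$ forces all $\beta_i=D/4=\alpha_i$. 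That equality means the auxiliary rectangle coincides with $\cS$, which is precisely adjacency. You should replace your contrapositive case analysis with this chain of equalities.
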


\begin{proof} We continue to use the notations of Claim \ref{claim:diam}. First, we prove the direction (i)$\Longrightarrow$(ii).
	As $\tfrac{D}{2} = \mathrm{diam}\ler\mmn \geq \alpha_1+\alpha_3 \geq \alpha_2+\alpha_4$, we immediately obtain $\alpha_1=\alpha_2=\alpha_3=\alpha_4=\tfrac{D}{4}$. Combining this with \eqref{eq:beta-alpha-viszony} gives 
	$$
		\beta_1 \leq \tfrac{D}{4}, \;\; \beta_2 \geq \tfrac{D}{4}, \;\; \beta_3 \leq \tfrac{D}{4} \;\;\text{and}\;\; \beta_4 \geq \tfrac{D}{4},
	$$
	from which, by simple geometric considerations, we conclude $\beta_i = \tfrac{D}{4} = \alpha_i$ for all $i=1,2,3,4$.
	In particular, $\mu\sim\nu$ follows.
	
	As for the reverse direction (ii)$\Longrightarrow$(i), we only need to observe that
	\begin{equation*}
		F_{\eta}|_{\R\setminus[a,b]}=F_{\mu}|_{\R\setminus[a,b]}=F_{\nu}|_{\R\setminus[a,b]} \qquad (\eta\in\mmn).
	\end{equation*}
	Indeed, elements of $\mmn$ saturate the triangle inequality
	$$
	\dwo(\mu,\nu) = \dwo(\mu,\eta) + \dwo(\eta,\nu).
	$$
	Hence by \eqref{eq:vall-central} we have $F_\mu\wedge F_\nu \leq F_\eta \leq F_\mu\vee F_\nu$.
	Now, we basically reduced the problem to the case of the interval, and thus the argument of Claim \ref{claim:s-t-max-dist} can be applied with a simple rescaling. In such a way one obtains
	\bes
	\dwo(\eta_1,\eta_2) = \int_a^b |F_{\eta_1}(t)-F_{\eta_2}(t)|\dt\leq{\tfrac{1}{2}} D  \qquad \ler{\eta_1,\eta_2\in\mmn}
	\ees
	with equality if and only if $\{\eta_1,\eta_2\}=\{\xi_v^{\mu,\nu},\xi_h^{\mu,\nu}\}$.
\end{proof}

Now, we are in the position to give a metric characterization of Dirac masses.

\begin{claim}\label{claim: dirac characterization}
For a measure $\eta\in\wor$ the following statements are equivalent
\begin{itemize}
    \item[(i)] $\eta\in\Delta(\R)$,
    \item[(ii)] for all $n\in\mathbb{N}$ there are measures $\mun,\nun\in\wor$ such that
    \begin{itemize}
        \item[(a)] $\mun\sim\nun$,
        \item[(b)] $\dwo(\mun,\nun)=n$,
        \item[(c)] $\eta\in\{\xivmnn,\xihmnn\}$.
    \end{itemize}
\end{itemize}
\end{claim}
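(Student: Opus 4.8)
The claim characterizes Dirac masses via the existence, for every $n$, of an adjacent pair $\mu_n \sim \nu_n$ at distance $n$ whose vertical and horizontal bisecting measures include $\eta$. The implication $(i)\Rightarrow(ii)$ is the constructive part: given $\eta = \delta_c$, I would produce explicit $\mu_n, \nu_n$. The natural choice is to take the interval $(a,b)$ of the adjacency to be a long interval around $c$, say $(c-\tfrac n2, c+\tfrac n2)$, and let $\mu_n, \nu_n$ differ only by moving mass $\tfrac12$ between the endpoints $a = c-\tfrac n2$ and $b = c+\tfrac n2$: e.g. $\mu_n = \tfrac12\delta_a + \tfrac12\delta_b$ and $\nu_n$ obtained by reflecting, so that $F_{\mu_n}$ and $F_{\nu_n}$ enclose area $n$ and cross at height $h = \tfrac12$. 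One checks $\mu_n \sim \nu_n$ (conditions (1)--(2) hold with this interval), $\dwo(\mu_n,\nu_n) = \tfrac12 \cdot n + \tfrac12 \cdot n = n$ wait — more carefully the enclosed area is $\tfrac12 \cdot (b-a) \cdot$ (something); I would calibrate the masses and interval length so that the area is exactly $n$, which is a one-parameter adjustment. Then $F_\mu(v-) < h < F_\nu(v-)$ type condition holds, $\alpha_2 = \alpha_4 > 0$, and by Definition~\ref{def:vhbis} the horizontal bisecting measure $\xi_h^{\mu_n,\nu_n}$ is the one whose distribution function is flat at height $h=\tfrac12$ on the middle segment and agrees with $F_{\mu_n}, F_{\nu_n}$ outside — with the symmetric choice this collapses to $\delta_c$. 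So (a),(b),(c) all hold with $\eta$ being the horizontal (or vertical) bisecting measure.

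\medskip

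\noindent For the converse $(ii)\Rightarrow(i)$, I would argue by contradiction: suppose $\eta$ is \emph{not} a Dirac mass, so its support has two points $s_1 < s_2$ with $s_2 - s_1 = \delta > 0$, more precisely there is some height level $y_0 \in (0,1)$ and $\delta > 0$ with $F_\eta^{-1}(y_0 - ) \le s_1$ and $F_\eta^{-1}(y_0) \ge s_2$ — i.e. $F_\eta$ has an interval of constancy, OR the quantile function has a jump; in either case $\eta$ "spreads mass" over an interval of positive length. The key structural fact is that whenever $\mu \sim \nu$ via an interval $(a,b)$, \emph{both} bisecting measures $\xi_v^{\mu,\nu}$ and $\xi_h^{\mu,\nu}$ inherit strong rigidity from the "staircase" shape forced on $[a,b)$. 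Concretely, on $\R \setminus [a,b)$ they equal the common value $F_\mu = F_\nu$, and on $[a,b)$ the function $F_{\xi_v}$ is a single step (value $F_\mu(v)=F_\nu(v)$ below $v$, jumping at $v$) while $F_{\xi_h}$ is flat at height $h$ on a sub-interval. The plan is to show: if $\dwo(\mu,\nu) = n$ with $\mu \sim \nu$, then the "non-Dirac part" of $\xi_v^{\mu,\nu}$ and $\xi_h^{\mu,\nu}$ — meaning the part of their distribution function outside the single interval $[a,b)$ — is exactly the common tail of $\mu$ and $\nu$, and that tail is \emph{independent of $n$} in a suitable sense only if it is trivial. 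More usefully: $F_{\xi_v^{\mu,\nu}}$ and $F_{\xi_h^{\mu,\nu}}$ each have \emph{at most one point of increase outside $[a,b)$ contributed by the matching part}, so being a bisecting measure of an adjacent pair forces $\eta$ to be "one step plus the common background." As $n \to \infty$ the enclosed area $n$ must come from somewhere; I would quantify that the common background part of $\mu_n = \nu_n$ outside $[a_n,b_n)$ contributes $0$ to $\dwo(\mu_n,\nu_n)$, so all of the distance $n$ is accounted for on $[a_n,b_n)$, where $F_{\mu_n}$ and $F_{\nu_n}$ differ. Then I would analyze the shape of $\xi_h^{\mu_n,\nu_n}$ (respectively $\xi_v$) and show its distribution function must be a \emph{single jump from $0$ to $1$} — forcing $\eta \in \Delta(\R)$ — precisely because the only way for the bisecting-measure construction, applied to a pair whose distance $\to\infty$, to keep $\eta$ fixed is if $\eta$'s distribution function is already extremal (constant $0$ then constant $1$).

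\medskip

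\noindent \textbf{Main obstacle.} The hard part is the converse direction $(ii)\Rightarrow(i)$, specifically ruling out that a non-Dirac $\eta$ — say $\eta = \tfrac12\delta_0 + \tfrac12\delta_1$ or a continuous measure — could serve as a bisecting measure for arbitrarily-far adjacent pairs. One must use the adjacency interval $(a_n,b_n)$ cleverly: the bisecting measures are "almost determined" by $\mu_n,\nu_n$ off $[a_n,b_n)$, where they coincide with the common part. If $\eta$ has mass spread over an interval $I$ of length $\ell > 0$, then for $\eta$ to equal $\xi_h^{\mu_n,\nu_n}$, the common part of $\mu_n,\nu_n$ must reproduce $\eta$ off $[a_n,b_n)$ — but then $[a_n,b_n)$ can contribute at most the flat segment, which can only realize the single "step" of a distribution jumping at one level; this caps how much of $\eta$'s spread can be explained, independent of $n$, while the \emph{distance} $n$ grows. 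Making this quantitative — showing the enclosed area between $F_{\mu_n}$ and $F_{\nu_n}$ that is "compatible" with a fixed non-Dirac $\eta$ is bounded — is where the real work lies, and I would expect to need the explicit staircase descriptions \eqref{eq:ver-hor1}--\eqref{eq:ver-hor2} together with a careful bookkeeping of which parts of $\R$ carry the difference $|F_{\mu_n} - F_{\nu_n}|$ versus which parts are pinned to $F_\eta$.
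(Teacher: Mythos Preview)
Your proposal has the right skeleton but genuine gaps in both directions.

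\textbf{The easy direction $(i)\Rightarrow(ii)$.} Your construction is broken as written: taking $\mu_n=\tfrac12\delta_a+\tfrac12\delta_b$ and $\nu_n$ its ``reflection'' about the center gives $\nu_n=\mu_n$, so you never obtain two distinct adjacent measures. The paper's choice is far simpler: set $\mu_n=\delta_{t-n/2}$ and $\nu_n=\delta_{t+n/2}$. These are adjacent via the interval $(t-\tfrac n2,t+\tfrac n2)$, their distance is exactly $n$, and a direct check of \eqref{eq:ver-hor1} shows that the \emph{vertical} bisecting measure is $\delta_t$. (The horizontal one is $\tfrac12\delta_{t-n/2}+\tfrac12\delta_{t+n/2}$, not $\delta_t$; your claim that the horizontal bisector ``collapses to $\delta_c$'' is false.)

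\textbf{The hard direction $(ii)\Rightarrow(i)$.} You correctly set up a contradiction and observe that $F_\eta$ must have very constrained shape on $[a_n,b_n)$, but you never identify the actual mechanism that forces the contradiction, and your stated plan---bounding the ``compatible enclosed area'' via the spread of $\eta$---is not how the argument goes. The paper's key observations, which your sketch misses, are:
\begin{itemize}
\item Since $(\beta_n-\alpha_n)(b_n-a_n)=n$ with $\beta_n-\alpha_n\le 1$, necessarily $b_n-a_n\ge n$.
\item The two cases $\eta=\xihmnn$ and $\eta=\xivmnn$ behave \emph{differently} and must be treated separately. In the horizontal case $F_\eta$ is constant equal to $\tfrac{\alpha_n+\beta_n}{2}\in(0,1)$ on all of $[a_n,b_n)$, and one shows directly that $\int_{a_n}^{b_n}|F_\eta-F_{\delta_0}|\ge n/2$, contradicting $\eta\in\wor$. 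So $\eta=\xivmnn$ for all large $n$.
\item In the vertical case $F_\eta$ has a single jump at the midpoint $\tfrac12(a_n+b_n)$ and is constant on each half of $[a_n,b_n)$. Since $\eta$ is not Dirac, these flat pieces have finite maximal length, which forces the midpoints $\tfrac12(a_n+b_n)$ (along a subsequence) to run off to $\pm\infty$; one then again bounds $\dwo(\eta,\delta_0)$ from below by something tending to $\infty$.
\end{itemize}
The unifying idea is not that a non-Dirac $\eta$ caps the achievable distance $n$ via its ``spread,'' but that being a bisecting measure of a pair at distance $n$ forces $F_\eta$ to be flat at an interior height over an interval of length $\gtrsim n$, which is incompatible with $\eta$ having finite first moment. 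Your ``main obstacle'' paragraph gestures toward a quantitative bound but does not locate this first-moment obstruction, and without it the argument cannot close.
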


\begin{proof}
Assume first that $\eta=\delta_t$ for some $t\in\R$. Then the choices $\mun:=\delta_{t-{\frac{1}{2}} n}$ and $\nun:=\delta_{t+{\frac{1}{2}} n}$ ($n\in\N$) obviously satisfy (ii). 
Therefore what remained to show is that it is impossible to have $\eta\notin\Delta(\R)$ and (ii) at the same time.
We shall prove this {by contradiction}, so from now on we assume that $\eta\notin\Delta(\R)$ fulfils (ii).

By definition, for all $n\in\N$ there exists an interval $[a_n,b_n)$ such that
$$
	F_{\mun}|_{\R\setminus[a_n,b_n)} = F_{\nun}|_{\R\setminus[a_n,b_n)} = F_{\eta}|_{\R\setminus[a_n,b_n)},
$$
and that $F_{\mun}|_{[a_n,b_n)}$ and $F_{\nun}|_{[a_n,b_n)}$ are both constants.
Set 
$$
	\alpha_n := \left(F_{\mun} \wedge F_{\nun}\right)(a_n), \quad\text{and}\quad \beta_n := \left(F_{\mun} \vee F_{\nun}\right)(a_n).
$$
Notice that $(\beta_n-\alpha_n)(b_n-a_n) = n$, hence $b_n-a_n\geq n$.

If $\eta = \xihmnn$, then $F_{\eta}|_{[a_n,b_n)}$ is also constant with $F_{\eta}(a_n) = \tfrac{\alpha_n+\beta_n}{2}$.
A simple geometric consideration shows that in this case we have 
$$
	\dwo(\eta,\delta_0) \geq \int_{a_n}^{b_n} |F_\eta(t) - F_{\delta_0}(t)|\dt \geq \tfrac{n}{2}.
$$
Therefore there exists a number $N\in\N$ such that 
$$
\eta = \xivmnn \quad (n \geq N).
$$

Again by definition, for all $n \geq N$ we have that $F_\eta$ is constant $\alpha_n$ on $\left[a_n,\tfrac{1}{2}(a_n+b_n)\right)$, and constant $\beta_n$ on $\left[\tfrac{1}{2}(a_n+b_n),b_n\right)$.
As $\eta$ is not a Dirac mass, we get that there is a maximal positive number $s_n \geq \tfrac{b_n-a_n}{2}$ such that $F_\eta$ is constant on both intervals $$\left[\tfrac{1}{2}(a_n+b_n)-s_n, \tfrac{1}{2}(a_n+b_n)\right)\qquad\mbox{and}\quad\left[\tfrac{1}{2}(a_n+b_n), \tfrac{1}{2}(a_n+b_n)+s_n\right).$$
Therefore there exists an infinite subset $\cN$ of positive integers such that for all $j,k\in\cN, j\neq k$ we have
$$
	\left[a_{j},\tfrac{1}{2}(a_{j}+b_{j})\right) \cap \left[\tfrac{1}{2}(a_{k}+b_{k}),b_{k}\right) = \emptyset
$$
or
$$
	\left[a_{k},\tfrac{1}{2}(a_{k}+b_{k})\right) \cap \left[\tfrac{1}{2}(a_{j}+b_{j}),b_{j}\right) = \emptyset.
$$
Keeping in mind that $b_n-a_n\geq n$ $(n\in\N)$ gives that $\left\{\tfrac{1}{2}(a_{j}+b_{j})\, |\, j\in\cN\right\}$ is a set that clusters at $+\infty$ or $-\infty$.
Suppose it clusters at least at $+\infty$. Then for large enough numbers $j\in\cN$ one easily concludes that
$$
	\dwo(\eta,\delta_0) \geq \int_{a_j}^{\tfrac{1}{2}(a_{j}+b_{j})} \left|F_\eta(t) - F_{\delta_0}(t)\right|\dt \geq \tfrac{j}{2},
$$
which is a contradiction. If $\left\{\tfrac{1}{2}(a_{j}+b_{j})\, \big|\, j\in\cN\right\}$ clusters only at $-\infty$, then with a similar method we conclude a contradiction.
\end{proof}

Now we are in the position to prove the main result of this subsection, which says that $\wor$ is isometrically rigid.

\begin{theorem}\label{thm:mainW1R}
Let $\varphi\colon\wor\to\wor$ be an isometry, that is, a bijection satisfying
\begin{equation*}
    \dwo(\varphi(\mu),\varphi(\nu)) = \dwo(\mu,\nu) \qquad (\mu,\nu\in\wor).
\end{equation*}
Then $\varphi = \psi_\#$ for some $\psi\in\isom(\R)$. Therefore, we also have 
$$
\isom(\wor) = \isom(\R).
$$
\end{theorem}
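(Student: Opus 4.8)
The plan is to combine the metric characterization of Dirac masses from Claim~\ref{claim: dirac characterization} with the classical description of isometries of $L^1(0,1)$ (via quantile functions and Vallender's formula \eqref{eq:vall-central}). First I would use Claim~\ref{claim: dirac characterization}: since the properties (a), (b), (c) appearing there are purely metric (adjacency is characterized metrically by Claim~\ref{claim: sim characterization}, and the vertical/horizontal bisecting measures are, by that same Claim, exactly the two measures realizing the diameter of the midpoint set), any isometry $\varphi$ of $\wor$ maps $\Delta(\R)$ onto $\Delta(\R)$. Hence $\varphi$ restricts to a bijection of $\Delta(\R)\cong\R$, which is automatically an isometry of $\R$ because $\dwo(\delta_s,\delta_t)=|s-t|$; call it $\psi$, so $\psi\in\isom(\R)$ and $\varphi(\delta_t)=\delta_{\psi(t)}$ for all $t$.

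Next I would replace $\varphi$ by $\psi_\#^{-1}\circ\varphi$, which is again an isometry of $\wor$ and now fixes every Dirac mass. So it suffices to show that an isometry $\Phi$ of $\wor$ fixing $\Delta(\R)$ pointwise is the identity. For this I would mimic the argument in Claim~\ref{claim:identity}: formula \eqref{eq:vall-central} gives $\dwo(\mu,\delta_t)=\int_{-\infty}^t F_\mu + \int_t^\infty (1-F_\mu)$, and differentiating from the right in $t$ recovers $F_\mu(t)=\tfrac12\bigl(1+\lim_{h\searrow0}\tfrac1h(\dwo(\mu,\delta_{t+h})-\dwo(\mu,\delta_t))\bigr)$ at every point of right-continuity, hence at all $t\in\R$. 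Since $\Phi$ preserves all distances to Dirac masses and fixes each $\delta_t$, we get $F_{\Phi(\mu)}=F_\mu$, i.e. $\Phi(\mu)=\mu$. Therefore $\varphi=\psi_\#$, and since conversely every $\psi_\#$ is an isometry by \eqref{eq:hashtag}, the homomorphism $\#$ of \eqref{eq:hashtag} is an isomorphism onto $\isom(\wor)$, giving $\isom(\wor)=\isom(\R)$.

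One technical point to handle carefully is the measurability and finiteness needed to justify the derivative computation: one must check that the integrands are integrable near $t$ (true because $\mu\in\wor$ has finite first moment) and that the one-sided derivative exists at every $t$, not just almost everywhere — this follows from right-continuity of $F_\mu$ exactly as in \eqref{eq:step2helyett2}. Another point is that the passage ``$\varphi$ maps $\Delta(\R)$ onto $\Delta(\R)$'' requires both inclusions: $\varphi(\Delta(\R))\subseteq\Delta(\R)$ because $\varphi$ preserves property (ii) of Claim~\ref{claim: dirac characterization}, and surjectivity of this restriction because $\varphi^{-1}$ is also an isometry and the same argument applies to it.

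I expect the main obstacle to have already been overcome in the excerpt, namely the metric characterization of Dirac masses (Claims~\ref{claim:diam}--\ref{claim: dirac characterization}); given those, the deduction of Theorem~\ref{thm:mainW1R} is short. If anything, the only remaining subtlety is bookkeeping the reduction $\varphi\mapsto\psi_\#^{-1}\varphi$ and confirming that the ``distances to Dirac masses determine the measure'' lemma transfers verbatim from the interval setting of Claim~\ref{claim:identity} to $\R$, which it does since \eqref{eq:vall-central} holds on all of $\wor$.
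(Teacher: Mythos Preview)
Your proposal is correct and follows essentially the same route as the paper: use Claims~\ref{claim: sim characterization}--\ref{claim: dirac characterization} to see that $\varphi$ (and $\varphi^{-1}$) preserve $\Delta(\R)$, read off $\psi\in\isom(\R)$ from the action on Diracs, reduce to the case $\psi=\id$, and then recover $F_\mu$ from the distances $\dwo(\mu,\delta_t)$ via the one-sided derivative computation \eqref{eq:step2helyett1}--\eqref{eq:step2helyett2}. The opening reference to ``the classical description of isometries of $L^1(0,1)$'' is a red herring you never actually use; everything comes from the derivative argument, exactly as in the paper.
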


\begin{proof}
Since $\varphi$ is an isometry, for every $\mu,\nu,\eta\in\wor, \mu\neq\nu$ we have 
$$
	\eta\in\mmn \quad\iff\quad \varphi(\eta)\in M(\varphi(\mu),\varphi(\nu)),
$$
and hence also $\mathrm{diam}\left(\mmn\right)=\mathrm{diam}\left(M(\varphi(\mu),\varphi(\nu))\right)$. By the above claims this implies that $\varphi$ preserves adjacency in both directions, and thus $\varphi$ leaves $\Delta(\R)$ invariant. 
Since we have $\dwo(\delta_x,\delta_y)=|x-y| \; (x,y\in\R)$, we easily obtain an isometry $\psi\colon\R\to\R$ such that 
$$
	\varphi(\delta_x) = \delta_{\psi(x)} \qquad x\in\R.
$$
If $\psi$ is the identity map on $\R$, then by an argument similar to the one in \eqref{eq:step2helyett1}--\eqref{eq:step2helyett2} in Claim \ref{claim:identity} we can conclude that $\varphi$ is the identity on $\wor$. If $\psi$ is not the identity, then we can replace $\varphi$ with $\varphi \psi^{-1}_{\#}$, and in this case we obtain $\varphi=\psi_{\#}$.
\end{proof}

We finish this subsection with two short remarks. First, we would like to point out a somewhat surprising consequence of Theorems \ref{thm:main} and \ref{thm:mainW1R}.

\begin{corollary}
Even though we have $\Wo\zo\subset\wor$, not every isometry of $\Wo\zo$ can be extended into an isometry of $\wor$.
Moreover, no subgroup of $\isom\ler{\Wo(\R)}$ is isomorphic to $\isom\ler{\Wo\zo}$.
\end{corollary}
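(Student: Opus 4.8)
The plan is to prove the two assertions of the Corollary separately, both as consequences of the rigidity statements in Theorems \ref{thm:main} and \ref{thm:mainW1R}. The natural inclusion $\Wo\zo\subset\wor$ is realized by viewing a measure on $[0,1]$ as a measure on $\R$ supported in $[0,1]$; by \eqref{eq:vall-central} this embedding is isometric, since the formula $\dwo(\mu,\nu)=\int|F_\mu-F_\nu|$ is insensitive to whether one integrates over $[0,1]$ or over $\R$. For the first assertion, I would take the flip operation $j$ (or $r_\# j$), which by Theorem \ref{thm:main} is an isometry of $\Wo\zo$, and argue by contradiction: suppose $j$ extended to some $\Phi\in\isom(\wor)$. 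By Theorem \ref{thm:mainW1R}, $\Phi=\psi_\#$ for some $\psi\in\isom(\R)$, so in particular $\Phi$ sends Dirac masses to Dirac masses. But $j(\delta_{1/2})=\tfrac12\delta_0+\tfrac12\delta_1$ is not a Dirac mass, a contradiction. Hence $j$ (and also $r_\# j$) admits no extension.

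For the second assertion, I would suppose toward a contradiction that there is an injective group homomorphism $\Theta\colon\isom\ler{\Wo\zo}\to\isom\ler{\wor}$. Since $\isom\ler{\Wo\zo}\cong C_2\times C_2$ by Theorem \ref{thm:main}, and $\isom\ler{\wor}\cong\isom(\R)$ by Theorem \ref{thm:mainW1R}, this would give an embedding of $C_2\times C_2$ into $\isom(\R)$. Now $\isom(\R)$ is the group of maps $x\mapsto \pm x + c$; its only torsion elements are the point reflections $x\mapsto 2c-x$ (order two) and the identity, and any two distinct point reflections generate an infinite dihedral group, so they do not commute. Consequently $\isom(\R)$ contains no subgroup isomorphic to the Klein four-group $C_2\times C_2$ (which would require two commuting involutions generating a group of order four). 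This contradiction proves that no subgroup of $\isom\ler{\Wo(\R)}$ is isomorphic to $\isom\ler{\Wo\zo}$.

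The only point requiring a little care — the main (though still elementary) obstacle — is the structure-of-$\isom(\R)$ step: one must verify that $\isom(\R)$ has no Klein-four subgroup. This is immediate once one notes that an element of order two in $\isom(\R)$ must be orientation-reversing (an orientation-preserving isometry $x\mapsto x+c$ has infinite order unless $c=0$), hence a reflection $\sigma_c\colon x\mapsto 2c-x$, and that $\sigma_{c_1}\sigma_{c_2}\colon x\mapsto x+2(c_1-c_2)$ has infinite order when $c_1\neq c_2$; so distinct involutions never commute and the product of two distinct involutions is never an involution. Thus any subgroup of $\isom(\R)$ generated by involutions is either trivial, of order two, or infinite dihedral — never $C_2\times C_2$. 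I would also remark in passing that the first assertion can be seen more directly from the second (a single non-extendable isometry could in principle still exist even if the whole group embeds, so the two statements are genuinely of increasing strength, and it is cleanest to present the short Dirac-mass argument for the first).
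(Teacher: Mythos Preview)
Your proposal is correct and follows essentially the same approach as the paper: for the first assertion you use that $j$ fails to preserve Dirac masses while every isometry of $\wor$ (being of the form $\psi_\#$) must, and for the second you show $\isom(\R)$ contains no copy of $C_2\times C_2$ because the product of two distinct involutions $\sigma_{c_1}\sigma_{c_2}$ is a nontrivial translation---exactly the paper's observation that in $\isom\ler{\Wo(\R)}$ the product of two different elements of order two is never of order two. Your write-up simply supplies more detail on the $\isom(\R)$ computation than the paper does.
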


\begin{proof}
	For the flip operation $j$ we have $j(\Delta\zo) \not\subseteq \Delta\zo$, however, every isometry of $\wor$ leaves the set of all Dirac masses invariant.
	As for the second statement, we note that in $\isom\ler{\Wo(\R)}$ the product of any two different elements of order two is never an element of order two, in contrast with the Klein group.
\end{proof}

Second, our proof of the above theorem strongly relies on the assumption that $\varphi$ is bijective, since for a general $\varphi\in\isemb\ler{\Wo(\R)}$ we usually have $\varphi\ler{\mmn}\subsetneq M\ler{\varphi(\mu),\varphi(\nu)}$. In fact, one can easily construct non-surjective examples with essentially different properties. We continue with two such examples. The first one is the translation on the space of quantile functions. These maps will be crucial in the next section, so we start by a definition for $p\geq1$.

\begin{definition}[Translation in $\Wp(\R)$]\label{def:translation}
	Let $p\geq1$ and $\nu\in\Wp(\R)$ be arbitrary. Then the map defined by
	$$
		\varphi\colon\Wp(\R)\to\Wp(\R), \;\;\; F^{-1}_{\varphi\ler{\mu}} = F^{-1}_\mu + F^{-1}_\nu \;\;\; (\mu\in\Wp(\R))
	$$
	is called a \emph{translation} by the measure $\nu$. By \eqref{eq:wp-tav}, this defines an isometric embedding. Clearly, a translation is bijective if and only if $\nu\in\Delta(\R)$.
\end{definition}
{Before we continue, we note that translation can be interpreted as summing random variables with laws $\mu$ and $\nu$. While convolution corresponds to summing independent random variables, $F_{\mu}^{-1}+F_{\nu}^{-1}$ corresponds to summing random variables that are most closely coupled.}
Let us now point out that if $\varphi$ is the translation by $\tfrac{1}{2}\delta_{-1} + \tfrac{1}{2}\delta_{1}$, then the range of $\varphi$ contains only such measures whose support is never the whole $\R$, as the quantile function of each $\varphi(\mu)$ jumps at $\tfrac{1}{2}$. If $\varphi$ is the translation by the uniform measure on $[0,1]$, then all the slopes of each $F^{-1}_{\varphi(\mu)}$ are at least 1. 
Therefore all the slopes of each $F_{\varphi(\mu)}$ must be at most 1, hence $\varphi$ only contains absolutely continuous measures in its range. One could say that such a translation ``smoothens out'' measures.

Our second example is special in the sense that its range contains only measures which coincide on the open interval $(-1,1)$. Let $E\colon [-1,1) \to [\tfrac{1}{3},\tfrac{2}{3}]$ be an arbitrary right-continuous and monotone increasing function. 
We define $\varphi\colon\wor\to\cP(\R)$ by
\begin{equation*}
F_{\varphi(\mu)}(x) = 
\left\{\begin{matrix}
	\tfrac{1}{3}F_\mu(\tfrac{x+1}{3}) & \text{if}\; x < -1, \\
	E(x) & \text{if}\; -1\leq x <1,\\
	\tfrac{2}{3} + \tfrac{1}{3}F_\mu(\tfrac{x-1}{3}) & \text{if}\; 1\leq x.
\end{matrix}\right..
\end{equation*}
It is easy to see that indeed $\varphi$ maps $\wor$ into $\cP(\R)$.
Next, notice that the following holds for all $\mu,\nu\in\wor$:
\begin{align*}
	\int_{-\infty}^\infty|F_{\varphi(\mu)}(x) &- F_{\varphi(\nu)}(x)|~\mathrm{d}x\\
	&= \int_{-\infty}^{-1} \tfrac{1}{3}|F_\mu(\tfrac{x+1}{3}) - F_\mu(\tfrac{x+1}{3})|~\mathrm{d}x + \int_{1}^\infty \tfrac{1}{3}|F_\mu(\tfrac{x-1}{3}) - F_\mu(\tfrac{x-1}{3})|~\mathrm{d}x \\ 
	&= \int_{-\infty}^\infty |F_{\mu}(x) - F_{\nu}(x)|~\mathrm{d}x = \dwo(\mu,\nu).
\end{align*}
Therefore, substituting $\nu = \delta_0$ and noticing that $\varphi(\delta_0)$ is supported on $[-1,1]$ shows that $\varphi$ maps $\wor$ into itself. Hence it is an isometric embedding of $\wor$.

In contrast to the above examples, one may observe the following fact which shows at least some kind of a rigidity of the Wasserstein space $\wor$.

\begin{proposition}
	Let $\varphi$ be an isometric embedding of $\wor$ such that $\varphi(\Delta(\R))\subseteq \Delta(\R)$.
	Then $\varphi$ is an isometry of $\wor$.	
\end{proposition}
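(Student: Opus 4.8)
The plan is to localise the final part of the proof of Theorem~\ref{thm:mainW1R}: there we used bijectivity to move Dirac masses around, whereas here we are handed the weaker hypothesis $\varphi(\Delta(\R))\subseteq\Delta(\R)$ directly, but this turns out to be enough.

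\emph{Step 1: reduction to the case $\varphi|_{\Delta(\R)}=\mathrm{id}$.} Since $\dwo(\delta_x,\delta_y)=|x-y|$, the restriction $\varphi|_{\Delta(\R)}$ is an isometric self-embedding of $(\Delta(\R),\dwo)$, which is isometric to $(\R,|\cdot|)$. Every isometric self-embedding of the real line has the form $x\mapsto \pm x+c$, in particular it is surjective; hence there is some $\psi\in\isom(\R)$ with $\varphi(\delta_x)=\delta_{\psi(x)}$ for all $x\in\R$. The map $\psi^{-1}_{\#}\varphi$ is again an isometric embedding of $\wor$, it fixes every Dirac mass, and it is an isometry if and only if $\varphi$ is (since $\varphi=\psi_{\#}\circ(\psi^{-1}_{\#}\varphi)$). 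So we may assume from now on that $\varphi(\delta_x)=\delta_x$ for every $x\in\R$, and it suffices to prove $\varphi=\id_{\wor}$.

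\emph{Step 2: a measure is determined by its distances to Dirac masses.} Fix $\mu\in\wor$. Because $F_{\delta_t}$ vanishes on $(-\infty,t)$ and equals $1$ on $[t,\infty)$, formula \eqref{eq:vall-central} gives, exactly as in \eqref{eq:step2helyett1},
\[
\dwo(\mu,\delta_t)=\int_{-\infty}^{t}F_\mu(x)\,dx+\int_{t}^{\infty}\bigl(1-F_\mu(x)\bigr)\,dx\qquad(t\in\R),
\]
where both integrals are finite since $\mu$ has finite first moment. Therefore $\dwo(\mu,\delta_{t+h})-\dwo(\mu,\delta_t)=\int_t^{t+h}\bigl(2F_\mu(x)-1\bigr)\,dx$, and dividing by $h$ and using right-continuity of $F_\mu$ we obtain, as in \eqref{eq:step2helyett2},
\[
\lim_{h\searrow 0}\frac{\dwo(\mu,\delta_{t+h})-\dwo(\mu,\delta_t)}{h}=2F_\mu(t)-1\qquad(t\in\R).
\]
Hence the function $t\mapsto\dwo(\mu,\delta_t)$ recovers $F_\mu$, and so it determines $\mu$.

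\emph{Step 3: conclusion.} For every $\mu\in\wor$ and $t\in\R$, Step~1 and the fact that $\varphi$ preserves distances give $\dwo(\varphi(\mu),\delta_t)=\dwo(\varphi(\mu),\varphi(\delta_t))=\dwo(\mu,\delta_t)$. By Step~2 this forces $\varphi(\mu)=\mu$, so $\varphi=\id_{\wor}$; undoing the reduction, the original map equals $\psi_{\#}$, which is a bijective, hence genuine, isometry of $\wor$. I do not expect a serious obstacle here: the argument is essentially a restriction of the endgame of Theorem~\ref{thm:mainW1R}, and the only points that deserve a line of justification are that an isometric self-embedding of $\R$ is automatically onto (so that Step~1 really produces an element of $\isom(\R)$), and the finiteness of the two improper integrals above, which both follow from $\mu\in\wor$.
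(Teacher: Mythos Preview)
Your proof is correct and matches precisely what the paper has in mind: the paper omits the proof, saying only that ``one can easily do it using the ideas of Theorem~\ref{thm:mainW1R}'', and your Steps~1--3 are exactly the endgame of that theorem (the reduction via $\psi_\#^{-1}$ together with the argument of \eqref{eq:step2helyett1}--\eqref{eq:step2helyett2}), adapted to $\R$ in place of $[0,1]$. The two points you flag --- that an isometric self-embedding of $\R$ is automatically surjective, and the finiteness of the improper integrals --- are indeed the only places requiring a word of justification, and both are routine.
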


We omit the proof, as one can easily do it using the ideas of Theorem \ref{thm:mainW1R}.

We have seen that $\isom\ler{\wor)}$ and $\isom\ler{\mathcal{W}_2(\R))}$ are essentially different. To get the full picture, we continue by investigating $\isom\ler{\Wp(\R)}$ in the case of $p>1$, $p\neq2$.

\subsection{$p>1,\,p\neq2$ -- Characterization of isometric embeddings}\label{s:wpr}
Similarly to Section \ref{s:II}, here we are able to handle the more general case of isometric embeddings. This time, however, $\isom(\Wp(\R))$ and $\isemb(\Wp(\R))$ are different. We will show that isometric embeddings are compositions of trivial isometries and translations (see Definition \ref{def:translation}). In particular, it will turn out that every isometric embedding that leaves the set of all Dirac masses $\Delta(\R)$ invariant is a trivial isometry. In this subsection it is more convenient to consider $\Wp(\R)$ as a space of quantile functions, hence, as a subset of $L^p\big((0,1)\big)$.
In order to achieve our goal first, we prove an abstract Mankiewicz-type lemma which ensures that every isometric embedding of $\Wp(\R)$ can be extended to an isometric embedding of $L^p((0,1))$. 
Then we apply the Banach--Lamperti theorem which describes all linear isometric embeddings of $L^p((0,1))$ for $p > 1$, $p\neq 2$, see \cite[Theorem 3.1]{Lamperti}.

We call a convex subset of a real Banach space with non-empty interior a convex body.
Also, when we talk about bijective distance preserving maps between \emph{two different} metric spaces, then we will call them simply isometries.
When bijectivity is not assumed, then we shall call them isometric embeddings.

Now, we state Mankiewicz's theorem.

\begin{theorem}[Mankiewicz, \cite{Mankiewicz}]
	Let $X$ and $Y$ be two real Banach spaces and $K\subset X$, $M\subset Y$ be convex bodies.
	Then every isometry $\phi\colon K\to M$ can be extended to an (affine) isometry $\Phi\colon X\to Y$.
\end{theorem}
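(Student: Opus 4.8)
The plan is to adapt the Mazur--Ulam argument to the local setting. Since $\phi$ is an isometry it is continuous, and a continuous self-map of a convex set that preserves midpoints preserves all dyadic, hence all, convex combinations; so it is enough to prove that $\phi$ is \emph{affine} on $K$. Moreover, because $\operatorname{int} K$ is connected and $K=\overline{\operatorname{int} K}$, it suffices to prove the \emph{local} midpoint property: every $x_0\in\operatorname{int} K$ has a radius $r>0$ such that $\phi\bigl(\tfrac{x+y}{2}\bigr)=\tfrac{\phi(x)+\phi(y)}{2}$ whenever $x,y\in B(x_0,r)$. Indeed, on each such ball $\phi$ is then affine, these local affine maps glue along overlaps, and one obtains an affine map on $\operatorname{int} K$ agreeing with $\phi$, which extends to $K$ by continuity.

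For the local property, choose $r>0$ so that $\bar B(x_0,3r)\subseteq K$ and $\bar B(\phi(x_0),3r)\subseteq M$ --- securing this second inclusion, i.e. that $\phi$ keeps a $3r$-interior point of $K$ well inside $M$, is the technical heart and is discussed below. Given $x,y\in B(x_0,r)$, set $m:=\tfrac{x+y}{2}$ and build the iterated metric-midpoint sets $H_1:=\{z:\|z-x\|=\|z-y\|=\tfrac12\|x-y\|\}$ and $H_{n+1}:=\{z\in H_n:\|z-w\|\le\tfrac12\diam(H_n)\text{ for all }w\in H_n\}$. The reflection $z\mapsto x+y-z$ is an isometry of the ambient space fixing $m$ and carrying each $H_n$ onto itself; from this one reads off $m\in H_n$, $H_n\subseteq\bar B(m,\tfrac12\|x-y\|)$ and $\diam(H_{n+1})\le\tfrac12\diam(H_n)$, whence $\bigcap_n H_n=\{m\}$ (the $H_n$ are closed and sit inside the complete set $\bar B(x_0,2r)$). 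Since every $H_n$ lies in $\bar B(m,\tfrac12\|x-y\|)\subseteq\bar B(x_0,2r)\subseteq K$, the $H_n$ are the same whether computed in $(K,\|\cdot\|)$ or in $X$; the analogous sets attached to the pair $\phi(x),\phi(y)$ lie in $\bar B(\phi(x_0),2r)$, so by the choice of $r$ they are the same whether computed in $M$ or in $Y$. As the $H_n$ are defined purely metrically and $\phi\colon K\to M$ is a surjective isometry, $\phi$ maps $H_n(x,y)$ onto $H_n(\phi(x),\phi(y))$, and intersecting gives $\phi(m)=\tfrac{\phi(x)+\phi(y)}{2}$.

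Once $\phi$ is affine on $\operatorname{int} K$, fix $x_0\in\operatorname{int} K$ and put $T(u):=\phi(x_0+u)-\phi(x_0)$ for $u$ in a small ball about $0$; affineness forces $T$ to be additive and homogeneous (for small scalars) there, so $T$ extends uniquely to a bounded linear operator $X\to Y$, and $\Phi(x):=\phi(x_0)+T(x-x_0)$ is an affine extension of $\phi$ to $X$. It is an isometry since $\|\Phi(x)-\Phi(x')\|=\|T(x-x')\|$ agrees with $\|x-x'\|$ near $0$, hence everywhere by homogeneity.

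The one genuinely delicate ingredient is the quantitative interior-preservation behind $\bar B(\phi(x_0),3r)\subseteq M$: this is where convexity and non-emptiness of the interiors are used essentially and cannot be dispensed with. It is obtained by detecting interior points of a convex body metrically --- small metric balls around an interior point are isometric to genuine balls of the ambient space, which fails at boundary points --- and transporting this description through the mutually inverse surjective isometries $\phi$ and $\phi^{-1}$; this is precisely the content of Mankiewicz's original argument.
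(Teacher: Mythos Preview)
The paper does not prove Mankiewicz's theorem; it merely quotes it from \cite{Mankiewicz} as a tool and then applies it inside the proof of Lemma~\ref{lem:extnon2}. So there is no ``paper's own proof'' to compare against.

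As for your attempt itself: the Mazur--Ulam--style scheme you outline (iterated metric midpoint sets $H_n$, reflection symmetry $z\mapsto x+y-z$, shrinking diameters, gluing local affine pieces) is the standard route and the computations you sketch for $m\in H_n$ and $\diam(H_{n+1})\le\tfrac12\diam(H_n)$ are correct. The real issue is that you explicitly isolate the only nontrivial step --- showing that $\phi$ carries interior points of $K$ to interior points of $M$ with a quantitative radius estimate --- and then do not prove it. Your final paragraph says this ``is precisely the content of Mankiewicz's original argument,'' which is circular: that \emph{is} the theorem you are supposed to be proving. The heuristic you offer (``small metric balls around an interior point are isometric to genuine balls of the ambient space, which fails at boundary points'') is not obviously true in an arbitrary Banach space and in any case is not developed into an argument. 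Without this step you cannot guarantee $\bar B(\phi(x_0),2r)\subseteq M$, and then the $H_n'$ computed in $M$ need not coincide with those computed in $Y$, so the midpoint transfer $\phi(m)=\tfrac12(\phi(x)+\phi(y))$ is unjustified.

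In short: the architecture is right, but the load-bearing lemma (metric detection of interior points, hence interior preservation under a surjective isometry of convex bodies) is asserted rather than proved, so the proposal is incomplete at exactly the point where the genuine content of Mankiewicz's theorem lies.
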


We note that every isometric embedding of a Banach space $X$ into a strictly convex Banach space $Y$ is automatically affine (linear up to translation).
Indeed, in this case the strict triangle inequality holds in $Y$, hence the midpoint $y$ of any two points $y_1,y_2\in Y$ is characterized by 
$$
\|y-y_1\| = \|y-y_2\| = \tfrac{1}{2} \|y_1-y_2\|.
$$
Also, the Mazur--Ulam theorem ensures that all isometries between two Banach spaces are affine, although note that this statement fails for isometric embeddings in general.

The reason why we cannot apply Mankiewicz's theorem directly is that although $\Wp(\R)$ is a convex and closed subset of $L^p((0,1))$, its interior is empty.
However, since $L^p((0,1))$ is a strictly convex Banach space, we can overcome this obstacle with the forthcoming lemma. 
The linear span of a set $K\subseteq X$ will be denoted by $\linspan(K)$, and its closure by $(\linspan(K))^-$.

\begin{lemma}\label{lem:extnon2}
	Let $X$ be a real, strictly convex Banach space and $K\subset X$ be a convex set (with possibly empty interior) such that $0\in K$ and $(\linspan(K))^- = X$.
	Then every isometric embedding $\varphi\colon K\to X$ with $\varphi(0) = 0$ can be uniquely extended to a (linear) isometric embedding $L\colon X\to X$.
\end{lemma}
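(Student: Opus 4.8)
The strategy is to reduce the statement to Mankiewicz's theorem by ``thickening'' the convex set $K$ so that it acquires nonempty interior in a suitable subspace. First I would observe that it suffices to extend $\varphi$ to an isometric embedding of $(\linspan(K))^- = X$; uniqueness of the extension will follow automatically, since any two continuous extensions agree on $\linspan(K)$ (every point of $\linspan(K)$ is a finite linear combination of points of $K$, and an affine/linear isometric embedding is determined by its values on a spanning set through the characterisation of midpoints in a strictly convex space). So the whole issue is existence.

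For existence, the key idea is to replace $K$ by the set $K - K = \{x - y : x, y \in K\}$, or rather to build a convex body out of $K$ inside a space where $K$ does have interior points. Concretely, I would proceed as follows. For each $n \in \N$ consider an increasing sequence of finite-dimensional ``slices'': pick points $x_1, x_2, \dots \in K$ whose linear span is dense in $X$ (possible since $(\linspan(K))^- = X$ and $X$ is separable — or, if separability is not assumed, work with nets / transfinite exhaustion), and let $X_n := \linspan\{x_1,\dots,x_n\}$. Then $K_n := K \cap X_n$ is a convex subset of the finite-dimensional space $X_n$, and because $0 \in K$ and $X_n \subseteq \linspan(K)$, after possibly enlarging $n$ the set $K_n$ has nonempty interior \emph{relative to its own affine hull}, which is a subspace $Y_n \subseteq X_n$. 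The point is that $\varphi$ restricted to $K_n$ is an isometric embedding into $X$; since $Y_n$ is finite-dimensional, $\varphi(K_n)$ has affine hull some finite-dimensional subspace $Z_n \subseteq X$ of the same dimension, and $\varphi|_{K_n} \colon K_n \to \varphi(K_n)$ is an isometry between convex bodies (in $Y_n$ and $Z_n$ respectively). Mankiewicz's theorem then gives an affine isometry $Y_n \to Z_n$ extending it; composing with translation by $-\varphi(0) = 0$ makes it linear. Strict convexity of $X$ forces compatibility: the extension on $Y_n$ and on $Y_{n+1}$ must agree on $Y_n$, again by the midpoint characterisation. Taking the union over $n$ produces a linear isometric embedding defined on $\bigcup_n Y_n = \linspan(K)$, which is an isometry onto its image with values in $X$; since it is $1$-Lipschitz (indeed norm-preserving) it extends continuously and uniquely to the closure $X$, and the extension $L \colon X \to X$ is still linear and isometric.

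I would then check the two routine points carefully: (i) that $\varphi|_{K_n}$ really is an \emph{isometry onto} $\varphi(K_n)$ between convex bodies in their affine hulls — surjectivity onto the image is trivial, convexity of the image is \emph{not} automatic, so here one must use that $\varphi$, being an isometric embedding into the strictly convex space $X$, preserves metric midpoints and hence maps the segment $[x,y] \subseteq K_n$ \emph{onto} the segment $[\varphi(x),\varphi(y)]$; iterating, $\varphi(K_n)$ is convex and $\varphi|_{K_n}$ is affine, so its affine hull $Z_n$ has dimension $\dim Y_n$ and $\varphi(K_n)$ is a body in $Z_n$; (ii) the coherence of the extensions $L_n \colon Y_n \to Z_n$, for which one notes that both $L_{n+1}|_{Y_n}$ and $L_n$ are linear isometric embeddings agreeing on the body $K_n \subseteq Y_n$, and a linear map on the finite-dimensional $Y_n$ is determined by its values on any set with nonempty interior.

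\textbf{Main obstacle.} The delicate point is precisely step (i): extracting from ``$\varphi$ is an isometric embedding into $X$'' the conclusion that $\varphi$ maps segments onto segments, so that $\varphi(K_n)$ is convex with the right dimension and Mankiewicz applies. This uses strict convexity of $X$ in an essential way — in a non-strictly-convex target an isometric embedding need not be affine on convex subsets, which is exactly why the hypothesis $p \neq 2$ is harmless but $p = 1$ would be (and the $\wor$ case earlier \emph{was}) genuinely different. A secondary technical nuisance is handling a possibly non-separable $X$: one replaces the sequence $(x_n)$ by a transfinite increasing family of finite-dimensional subspaces exhausting $\linspan(K)$ and runs the same coherence argument by transfinite induction, which changes nothing conceptually.
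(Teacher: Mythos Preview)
Your proposal is correct and follows essentially the same route as the paper: use strict convexity to see that $\varphi$ preserves midpoints and is therefore affine on $K$, apply Mankiewicz on finite-dimensional slices where $K$ has nonempty relative interior, check the finite-dimensional extensions are coherent, and pass to the closure. The only organisational difference is that the paper works with the full family $\mathcal S=\{M:\dim M<\infty,\ M\cap K\text{ a body in }M\}$ and obtains coherence from the observation $M,N\in\mathcal S\Rightarrow M+N\in\mathcal S$, which sidesteps separability entirely; your increasing-chain version is equivalent but needs the transfinite patch you mention in the non-separable case.
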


\begin{proof}
	We only need to extend $\varphi$ to the dense subspace $\linspan K$, as from there extending to the whole space is straightforward by a simple continuity and completeness argument.
	Let us define a set of finite dimensional subspaces, where $\Lat(X)$ denotes the lattice of all subspaces of $X$:
	$$
		\S := \{ M\in\Lat(X) \; |\; \dim M < \infty, M\cap K \text{ is a convex body in } M \}.
	$$
	Now, we prove that
	\bes
		\linspan K = \cup \{ M \; |\; M\in\S \}.
	\ees
	Indeed if $x = \sum_{j=1}^m a_j x_j, \; a_j\in\R, x_j\in K$, then 
	$$
	x \in M := \linspan\{ x_1,\dots x_m \} = \linspan\{ x_{i_1},\dots x_{i_k} \}
	$$ 
	where $1\leq i_1 < i_2 < \dots < i_k \leq m$ and the system $x_{i_1},\dots x_{i_k} \in K$ is a base in $M$.
	Obviously, the simplex spanned by 0 and this system is a convex body in $M$, hence so is $K\cap M$.
	Therefore we obtain $\linspan K \subseteq \cup \{ M \; | \; M\in\S \}$.
	For the reverse, let $M \in \S$ with $\dim M = m$, then by definition there must exist $m+1$ affine independent points $x_0, x_1, \dots, x_m$ in $K\cap M$.
	Clearly, then $\{x_i-x_0\}_{i=1}^m$ is a base in $M$, and hence $M \subseteq \linspan K$.

	Next, we show that for every $M\in\S$ there exists a unique linear extension of $\varphi|_{K\cap M}$ to $M$, which also happens to be an isometric embedding.
	By strict convexity, $\varphi$ is an affine map, which also fixes 0. 
	Therefore the restriction $\varphi|_{K\cap M}$ can be extended to a unique injective linear map 
	$$
		L_M\colon M\to \linspan(\varphi(K\cap M)).
	$$
	Clearly, $\varphi(K\cap M)$ is a convex body in $\linspan(\varphi(K\cap M))$, thus by Man\-kie\-wicz's theorem $L_M$ must be an isometry too.
	Also, note that by construction we have
	\begin{equation}\label{eq:biggerS}
		M,N\in\S, M \subseteq N \;\Longrightarrow\; L_N|_M = L_M.
	\end{equation}
	
	Now, we have an extension for every $M\in\S$, and our goal is to show that if $M,N\in \S$, then $L_M|_{M\cap N} = L_N|_{M\cap N}$.
	(However, we point out that $M,N\in \S$ does not imply $M\cap N\in \S$ in general.)
	This will show that the following map is a well-defined, linear, distance-preserving extension of $\varphi$:
	$$
		L\colon \linspan K \to X, \; Lx = L_M x \; \text{where } x\in M\in\S,
	$$
	and thus the proof will be complete.
	For this observe that
	\bes%\label{eq:plus}
		M, N \in \S \; \Longrightarrow \; M+ N \in \S.
	\ees
	Indeed, assume indirectly that $M+N \notin \S$, thus $(M+N)\cap K$ is not a convex body in $M+N$.
	This also means that $(M+N)\cap K$ spans an affine subspace $E$ of $M+N$ with co-dimension at least 1.
	As $0\in K$, the affine subspace $E$ is a linear subspace.
	But, as both $M\cap K$ and $N\cap K \subset (M+N)\cap K \subset E$ and they are convex bodies in $M$ and $N$, respectively, we get that $M, N\subset E$ and hence $M+N \subseteq E$, a contradiction.
	Therefore by \eqref{eq:biggerS}, for every $M,N\in \S$ we obtain 
	$$
		L_M|_{M\cap N} = L_{M+N}|_{M\cap N} = L_N|_{M\cap N},
	$$
	which completes the proof.
\end{proof}

As we mentioned earlier, $\Wp(\R)$ is a convex and closed subset of $L^p((0,1))$, where we regard elements of $\Wp(\R)$ as quantile functions.
Let us point out that $\linspan \ler{\Wp(\R)}$ is dense in $L^p((0,1))$, since the functions $t\mapsto t^n$, $n\in\N$, are elements of $\Wp(\R)$.
Therefore, by the above lemma, if $\varphi\in \mathrm{IsEmb}\ler{\Wp(\R)}$ and $\varphi$ fixes $\delta_0$, then $\varphi$ can be extended to a linear isometric embedding of $L^p((0,1))$.
The latter have been characterized in \cite[Theorem 3.1]{Lamperti}, which we recall now.

\begin{definition}[Regular set-isomorphism]
Let $((0,1),\mathcal{L}_{(0,1)},\lambda)$ be the measure space where $\mathcal{L}_{(0,1)}$ stands for the $\sigma$-algebra of all Lebesgue sets of $(0,1)$ and $\lambda$ is the normalized Lebesgue measure.
We call a map $T\colon \mathcal{L}_{(0,1)} \to \mathcal{L}_{(0,1)}$, defined modulo sets of measure zero, a \emph{regular set-isomorphism} if the following conditions hold:
\begin{itemize}
	\item[(a)] $T((0,1)\setminus A) = T((0,1))\setminus T(A)$ for all Lebesgue sets $A\subseteq (0,1)$,
	\item[(b)] $T\left( \cup_{n=1}^\infty A_n \right) = \cup_{n=1}^\infty T(A_n)$ for disjoint Lebesgue sets $A_n\subseteq (0,1)$,
	\item[(c)] for each Lebesgue set $A\subseteq (0,1)$, we have $\lambda(T(A)) = 0$ if and only if $\lambda(A) = 0$.
\end{itemize}
A regular set-isomorphism induces a linear transformation on the set of all Lebesgue-measurable functions, which is also denoted by $T$, and which is characterized by $T\bigchi_A = \bigchi_{T(A)}$ where $\bigchi_A$ denotes the characteristic function of a Lebesgue set $A$ (see \cite{Lamperti} for more details).
\end{definition}

We note that a regular set-isomorphism does not need to be bijective, for instance the map $T(A) := \frac{1}{2} A = \left\{\frac{1}{2} x \; | \; x\in A\right\}$ defines a non-bijective one.
In particular, as can be seen from the Banach--Lamperti theorem below, a typical linear isometric embedding of $L^p((0,1))$ is in fact not bijective. From now on, we will denote the constant function with value $1$ by $\mathbf{1}$. 

\begin{theorem}[Banach--Lamperti]
	Let $1\leq p <\infty$, $p\neq 2$ be a fixed parameter, and assume that $U\colon L^p((0,1)) \to L^p((0,1))$ is a linear isometric embedding.
	Then there exists a regular set-isomorphism $T$ of the measure space $((0,1),\mathcal{L}_{(0,1)},\lambda)$ such that
	\bes
		(Uf)(x) = h(x)\cdot (Tf)(x) \qquad (\text{a.e.~}x\in (0,1)),
	\ees
	where $h = U\bigchi_{(0,1)} = U {\bf 1} = U F^{-1}_{\delta_{1}} \in L^p((0,1))$.
\end{theorem}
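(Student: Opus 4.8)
The statement is the classical Banach--Lamperti theorem, and the plan is to prove it by Lamperti's own method: the key point is that for $p\neq 2$ the geometric relation ``$f$ and $g$ have essentially disjoint supports'' can be expressed purely in terms of norms, and is therefore automatically preserved by every linear isometric embedding. So the first and central step I would carry out is the elementary lemma that for $p\in[1,\infty)\setminus\{2\}$ and $f,g\in L^p((0,1))$ one has $f(x)g(x)=0$ for a.e.\ $x$ if and only if
\[
\norm{f+g}_p^p+\norm{f-g}_p^p=2\norm{f}_p^p+2\norm{g}_p^p .
\]
The forward implication holds for every $p$ and follows by splitting the integrals over the disjoint supports. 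For the converse one studies the one-variable function $s\mapsto\abs{s+1}^p+\abs{s-1}^p-2\abs{s}^p-2$: it is strictly positive for $s\neq 0$ when $p>2$, strictly negative for $s\neq 0$ when $p\in[1,2)$ (for $p=1$ simply because $\abs{s+1}+\abs{s-1}=2\max(1,\abs{s})$), and in all these cases it vanishes only at $s=0$ --- this is exactly where $p\neq 2$ enters, since for $p=2$ the function is identically zero (the parallelogram law). Applying this pointwise with $s=f(x)/g(x)$ on $\{g\neq 0\}$ converts the displayed equality into $f=0$ a.e.\ on $\{g\neq 0\}$, i.e.\ $fg=0$ a.e.

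Next I would build the regular set-isomorphism. Since $U$ is linear and isometric, combining the two directions of the lemma shows that $fg=0$ a.e.\ implies $(Uf)(Ug)=0$ a.e. Put $h:=U\mathbf 1$, so $\norm{h}_p=1$. For any Lebesgue set $A$ the functions $\chi_A$ and $\chi_{(0,1)\setminus A}$ are disjointly supported, hence so are $U\chi_A$ and $U\chi_{(0,1)\setminus A}$; since they sum to $h$, at almost every point exactly one of them is zero and the other equals $h$, so $U\chi_A=h\cdot\chi_{T(A)}$ for a set $T(A)$ (the essential support of $U\chi_A$), well defined modulo null sets. Properties (a)--(c) of a regular set-isomorphism then follow: (a) because $T(A)$ and $T((0,1)\setminus A)$ partition $T((0,1))=\{h\neq 0\}$; (b) from continuity and additivity of $U$ applied to $\chi_{\cup_n A_n}=\sum_n\chi_{A_n}$ together with disjointness of the supports of the $U\chi_{A_n}$; and (c) from $\norm{U\chi_A}_p=\norm{\chi_A}_p=\lambda(A)^{1/p}$, which vanishes exactly when $\lambda(T(A))=0$.

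Finally I would pass from sets to functions. Extending $T$ linearly to simple functions gives $Us=h\cdot(Ts)$ for every simple $s$, and $\norm{h\cdot Ts}_p=\norm{Us}_p=\norm{s}_p$, so $s\mapsto h\cdot Ts$ is isometric on the dense subspace of simple functions; by completeness of $L^p$ it extends continuously to all of $L^p((0,1))$, the identity $Uf=h\cdot(Tf)$ persists in the limit, and dividing by $h$ on $\{h\neq 0\}$ produces the induced measurable transformation $T$ with $T\chi_A=\chi_{T(A)}$. This is precisely the asserted representation. (Note that this proof is self-contained and does not use Lemma~\ref{lem:extnon2} or Mankiewicz's theorem --- those serve only to reduce the study of $\isemb(\Wp(\R))$ to \emph{linear} isometric embeddings of $L^p((0,1))$, to which the present theorem then applies.)

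The only genuinely delicate point is the first step: one must verify the strict sign and the equality case of the auxiliary one-variable function uniformly across the three regimes $p=1$, $1<p<2$ and $p>2$, where the inequality changes direction, and then be careful when transferring the pointwise identity to an almost-everywhere statement on $\{g\neq 0\}$. The measure-theoretic bookkeeping in the later steps --- consistency of $T$ ``modulo null sets'' across different sets $A$, and the harmless role of $\{h=0\}$ --- is routine but should be handled with the usual care.
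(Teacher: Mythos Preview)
The paper does not prove this theorem at all: it is stated as the Banach--Lamperti theorem and immediately followed by a reference to \cite{Lamperti} (``In \cite{Lamperti} the reader can find a more general statement\ldots''), with no argument given. So there is nothing in the paper to compare your proposal against.

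That said, your outline is correct and is precisely Lamperti's original proof. The key lemma --- that for $p\in[1,\infty)\setminus\{2\}$ and real scalars $a,b$ one has $\abs{a+b}^p+\abs{a-b}^p\geq 2\abs{a}^p+2\abs{b}^p$ for $p>2$ and the reverse for $p<2$, with equality in either case iff $ab=0$ --- is Lamperti's Lemma~2.1, and the rest of your sketch (disjoint supports are preserved, build $T$ on sets via $U\chi_A=h\cdot\chi_{T(A)}$, verify (a)--(c), extend to simple functions and then by density) reproduces the structure of his Theorem~3.1 faithfully. Your parenthetical remark is also accurate: the paper invokes Banach--Lamperti only \emph{after} Lemma~\ref{lem:extnon2} has reduced the Wasserstein problem to a linear isometric embedding of $L^p((0,1))$; the theorem itself needs no such reduction. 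The one place to be slightly more careful in a full write-up is property~(c): you should note explicitly that $T(A)\subseteq\{h\neq 0\}$ modulo null sets (since $T(A)$ is defined as the essential support of $U\chi_A=h\cdot\chi_{T(A)}$), so that $\int_{T(A)}\abs{h}^p=0$ forces $\lambda(T(A))=0$.
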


In \cite{Lamperti} the reader can find a more general statement that holds for all $\sigma$-finite measure spaces, and which includes a converse statement too. 
However, we shall only need the above very special version.
Note that even though $h = U {\bf 1}$ is not explicitly stated in \cite[Theorem 3.1]{Lamperti}, it can be found in its proof in case when the measure space is finite.
Before we can apply the Banach--Lamperti theorem, we have to show that the study of a general isometric embeddings of $\Wp(\R)$, $1<p<\infty$, can be reduced to the study of those isometric embeddings of $\Wp(\R)$ that fix $\delta_0$.
This is what we do in the next lemma.

\begin{lemma}\label{lem:Dirac2Dirac}
	Assume that $1<p<\infty$ and $\varphi \colon \Wp(\R)\to \Wp(\R)$ is an isometric embedding, that is,
	$$
		\left\|F^{-1}_{\varphi(\mu)} - F^{-1}_{\varphi(\nu)}\right\|_p = \left\|F^{-1}_{\mu} - F^{-1}_{\nu}\right\|_p \quad (\mu,\nu\in\Wp(\R)).
	$$
	Then either
	$$
		F^{-1}_{\varphi(\delta_t)} = F^{-1}_{\varphi(\delta_0)} + t\cdot{\bf 1} \qquad (t\in\R),
	$$
	or
	$$
		F^{-1}_{\varphi(\delta_t)} = F^{-1}_{\varphi(\delta_0)} - t\cdot{\bf 1} \qquad (t\in\R).
	$$
	Moreover, we have
	\begin{equation}\label{eq:phitildeertelmes}
		F^{-1}_{\varphi(\mu)} - F^{-1}_{\varphi(\delta_0)} \in \Wp(\R) \qquad (\mu\in\Wp(\R)).
	\end{equation}
	In particular, the mapping $\tilde\varphi\colon \Wp(\R)\to \Wp(\R)$ defined by
	\begin{equation}\label{eq:ujphi}
		F^{-1}_{\tilde\varphi(\mu)} := F^{-1}_{\varphi(\mu)} - F^{-1}_{\varphi(\delta_0)} \qquad (\mu\in\Wp(\R))
	\end{equation}
	is a well-defined isometric embedding such that either $\tilde\varphi(\delta_t) = \delta_t$ for all $t\in\R$, or $\tilde\varphi(\delta_t) = \delta_{-t}$ for all $t\in\R$.
\end{lemma}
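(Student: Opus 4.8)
The plan is to analyze the images of the Dirac masses under $\varphi$ by exploiting that the curve $t \mapsto \delta_t$ is, after passing to quantile functions, a straight line in $L^p((0,1))$, namely $t \mapsto t \cdot \mathbf{1}$. Since $\varphi$ is an isometric embedding and $L^p((0,1))$ is strictly convex for $p \in (1,\infty)$, the image of this line is again an isometric copy of $\R$ inside $L^p((0,1))$, and the metric characterization of midpoints via the strict triangle inequality forces $t \mapsto F^{-1}_{\varphi(\delta_t)}$ to be affine in $t$. Thus $F^{-1}_{\varphi(\delta_t)} = F^{-1}_{\varphi(\delta_0)} + t \cdot g$ for some fixed $g \in L^p((0,1))$ with $\norm{g}_p = 1$ (this norm equality comes from $\dwp(\delta_s,\delta_t) = |s-t|$). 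The first real task is to show $g = \pm \mathbf{1}$; this is where the structure of $\Wp(\R)$ (as opposed to all of $L^p$) enters: each $F^{-1}_{\varphi(\delta_t)}$ must be a genuine quantile function, i.e.\ monotone nondecreasing, and the difference of two quantile functions need not be monotone, but here $F^{-1}_{\varphi(\delta_t)} - F^{-1}_{\varphi(\delta_0)} = t g$ must have constant sign behavior compatible with monotonicity for \emph{all} $t \in \R$ simultaneously — taking $t > 0$ and $t < 0$ shows $g$ itself must be such that both $F^{-1}_{\varphi(\delta_0)} + tg$ and $F^{-1}_{\varphi(\delta_0)} - tg$ are nondecreasing for arbitrarily large $t$, which forces $g$ to be (a.e.\ equal to) a nonnegative constant or nonpositive constant; combined with $\norm{g}_p = 1$ on the normalized measure space we get $g = \mathbf{1}$ or $g = -\mathbf{1}$.

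Next I would establish \eqref{eq:phitildeertelmes}. Fix $\mu \in \Wp(\R)$. We need $F^{-1}_{\varphi(\mu)} - F^{-1}_{\varphi(\delta_0)}$ to be a quantile function of some element of $\Wp(\R)$, i.e.\ monotone nondecreasing (membership in $L^p$ being automatic). The idea is to compare $\varphi(\mu)$ with the images of suitable Dirac masses. For $s < t$ one has in $\Wp(\R)$ the identity
$$
\dwp(\delta_s, \mu)^p + \dwp(\mu,\delta_t)^p \ \geq \ \text{(something controlled by } t - s),
$$
but more usefully, the three points $\delta_s, \mu, \delta_t$ satisfy a rigid relation when $F^{-1}_\mu$ takes values in $[s,t]$ only on a controlled set; the cleanest route is: for any $a < b$, the measure $\mu$ lies "between" $\delta_a$ and $\delta_b$ in the sense that $F^{-1}_{\gamma}(x) := \min\{\max\{F^{-1}_\mu(x), a\}, b\}$ interpolates, and isometric embeddings preserve the relevant betweenness/geodesic relations (here I invoke the displacement-interpolation compatibility, valid for $p > 1$ by strict convexity, exactly as recalled before Theorem \ref{thm:main-2}). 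Pushing this through, since $\varphi(\delta_t) = \delta_{\pm t + c}$ for the constant $c$ with $F^{-1}_{\varphi(\delta_0)} = c\cdot\mathbf 1$ — wait, more carefully: $\varphi(\delta_0)$ need not be a Dirac mass a priori, but the affine formula with $g = \pm\mathbf 1$ gives $F^{-1}_{\varphi(\delta_t)} - F^{-1}_{\varphi(\delta_0)} = \pm t \mathbf 1$, and applying this with the geodesic from $\delta_{-R}$ to $\delta_R$ through $\mu$ (for large $R$) together with the preservation of geodesics shows $F^{-1}_{\varphi(\mu)}$ lies on the image geodesic from $F^{-1}_{\varphi(\delta_{-R})}$ to $F^{-1}_{\varphi(\delta_R)}$, hence $F^{-1}_{\varphi(\mu)} - F^{-1}_{\varphi(\delta_0)}$ is a convex combination, taken pointwise, of $F^{-1}_\mu \mp R\mathbf 1$ clipped appropriately — and such pointwise convex combinations of nondecreasing functions are nondecreasing. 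This yields \eqref{eq:phitildeertelmes}.

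Finally, granting \eqref{eq:phitildeertelmes}, the map $\tilde\varphi$ in \eqref{eq:ujphi} is well-defined into $\Wp(\R)$, and it is an isometric embedding because
$$
\norm{F^{-1}_{\tilde\varphi(\mu)} - F^{-1}_{\tilde\varphi(\nu)}}_p = \norm{F^{-1}_{\varphi(\mu)} - F^{-1}_{\varphi(\nu)}}_p = \norm{F^{-1}_\mu - F^{-1}_\nu}_p,
$$
the $F^{-1}_{\varphi(\delta_0)}$ terms cancelling. And $F^{-1}_{\tilde\varphi(\delta_t)} = F^{-1}_{\varphi(\delta_t)} - F^{-1}_{\varphi(\delta_0)} = \pm t\mathbf 1 = F^{-1}_{\delta_{\pm t}}$, so $\tilde\varphi(\delta_t) = \delta_t$ for all $t$ or $\tilde\varphi(\delta_t) = \delta_{-t}$ for all $t$, as claimed.

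\medskip

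\emph{Main obstacle.} The delicate point is \eqref{eq:phitildeertelmes} — showing that the \emph{pointwise} difference $F^{-1}_{\varphi(\mu)} - F^{-1}_{\varphi(\delta_0)}$ is monotone, i.e.\ an honest quantile function, rather than merely an $L^p$ function. The subtraction $\varphi(\mu) \mapsto \varphi(\mu) - \varphi(\delta_0)$ is linear in $L^p$ but has no reason a priori to respect the cone of monotone functions; we genuinely need the rigidity of how $\varphi$ acts on the "Dirac axis" together with preservation of $p$-geodesics ($p>1$) to pin down the image of $\mu$ relative to that axis. If one instead tried to first extend $\varphi$ to $L^p$ via Lemma \ref{lem:extnon2} and only then subtract, one would lose the information that the output stays in $\Wp(\R)$; so the order here — establish the Dirac action, then \eqref{eq:phitildeertelmes}, then define $\tilde\varphi$ — is essential, and Lemma \ref{lem:extnon2} plus Banach--Lamperti are applied only afterwards to $\tilde\varphi$.
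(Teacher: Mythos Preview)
Your argument for the first statement is correct and coincides with the paper's: strict convexity makes $t\mapsto F^{-1}_{\varphi(\delta_t)}$ affine, so $F^{-1}_{\varphi(\delta_t)} = F^{-1}_{\varphi(\delta_0)} + t g$, and requiring $F^{-1}_{\varphi(\delta_0)}+tg$ to be nondecreasing for \emph{every} $t\in\R$ forces $g$ to be constant, hence $g=\pm\mathbf 1$ by the norm condition. The paper phrases this via slopes $\frac{F^{-1}_{\varphi(\delta_x)}(t_1)-F^{-1}_{\varphi(\delta_x)}(t_2)}{t_1-t_2}\geq 0$ for all $x\in\R$, but the content is identical.

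There is, however, a genuine gap in your treatment of \eqref{eq:phitildeertelmes}. You write ``the geodesic from $\delta_{-R}$ to $\delta_R$ through $\mu$'', but for $p>1$ that geodesic is unique and consists \emph{only} of the Dirac masses $\delta_t$, $t\in[-R,R]$ (its quantile functions are the constants $t\mathbf 1$). A general $\mu$ simply does not lie on it, so there is no preservation-of-geodesics argument available along that segment, and the subsequent ``convex combination of $F^{-1}_\mu\mp R\mathbf 1$ clipped appropriately'' does not follow.

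The correct geodesic to exploit is the ray from $\delta_0$ \emph{through} $\mu$, i.e.\ $s\mapsto sF^{-1}_\mu$ for $s\geq 0$, which does stay inside $\Wp(\R)$ because $sF^{-1}_\mu$ is nondecreasing for every $s\geq 0$. The paper formalises this by setting
\[
I_{\mu,\nu}:=\{s\in\R : (1-s)F^{-1}_\mu+sF^{-1}_\nu \text{ is nondecreasing}\},
\]
observing that $[0,\infty)\subseteq I_{\mu,\nu}$ is equivalent to the slope inequality $\frac{F^{-1}_\mu(t_1)-F^{-1}_\mu(t_2)}{t_1-t_2}\leq\frac{F^{-1}_\nu(t_1)-F^{-1}_\nu(t_2)}{t_1-t_2}$ for all $t_1<t_2$, and that $I_{\mu,\nu}$ is a metric invariant (it records how far the unique geodesic extends). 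Since $[0,\infty)\subseteq I_{\delta_0,\mu}$ trivially, one gets $[0,\infty)\subseteq I_{\varphi(\delta_0),\varphi(\mu)}$, and the slope inequality for the pair $\varphi(\delta_0),\varphi(\mu)$ says precisely that $F^{-1}_{\varphi(\mu)}-F^{-1}_{\varphi(\delta_0)}$ is nondecreasing. Your instinct that geodesic preservation is the key was right; the issue is choosing the geodesic that actually contains $\mu$.
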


\begin{proof}
	First, we show that $F^{-1}_{\varphi(\delta_1)} - F^{-1}_{\varphi(\delta_0)}\in\{-\mathbf{1},\mathbf{1}\}.$
	By strict convexity of the norm, we have $F^{-1}_{\varphi(\delta_x)}(t) = (1-x)\cdot F^{-1}_{\varphi(\delta_0)}(t) + x\cdot F^{-1}_{\varphi(\delta_1)}(t)$ for all $x\in\R$ and $0<t<1$.
	Thus for all $x\in\R$ and $0<t_1<t_2<1$ we obtain
	\begin{align*}
		0&\leq \frac{F^{-1}_{\varphi(\delta_x)}(t_1) - F^{-1}_{\varphi(\delta_x)}(t_2)}{t_1-t_2}= (1-x)\cdot \frac{F^{-1}_{\varphi(\delta_0)}(t_1) - F^{-1}_{\varphi(\delta_0)}(t_2)}{t_1-t_2} + x\cdot \frac{F^{-1}_{\varphi(\delta_1)}(t_1) - F^{-1}_{\varphi(\delta_1)}(t_2)}{t_1-t_2}.
	\end{align*}
	Notice that this happens if and only if all the slopes $\tfrac{F^{-1}_{\varphi(\delta_0)}(t_1) - F^{-1}_{\varphi(\delta_0)}(t_2)}{t_1-t_2}$ and $\tfrac{F^{-1}_{\varphi(\delta_1)}(t_1) - F^{-1}_{\varphi(\delta_1)}(t_2)}{t_1-t_2}$ coincide, or in other words, $F^{-1}_{\varphi(\delta_1)} - F^{-1}_{\varphi(\delta_0)}$ is constant on $(0,1)$.
	Taking into account the distances, $F^{-1}_{\varphi(\delta_1)} - F^{-1}_{\varphi(\delta_0)} \in \left\{{\bf 1}, -{\bf 1}\right\}$ follows.
	By strict convexity we then easily get the first statement of the lemma.
	
	Next, let $\mu,\nu\in\Wp(\R)$ be arbitrary, and let us define the set:
	\begin{align*}
		I_{\mu,\nu} :&= \left\{ s\in\R \; |\; (1-s)\cdot F^{-1}_\mu + s\cdot F^{-1}_\nu \text{ is monotone increasing} \right\} \\
		& = \left\{ s\in\R \; |\; \exists\; \eta \in\Wp(\R) \colon (1-s)\cdot F^{-1}_\mu + s\cdot F^{-1}_\nu = F^{-1}_\eta \right\}.
	\end{align*}
	Clearly, the set $I_{\mu,\nu}$ is always a closed interval that contains $[0,1]$.
	Now, we observe that for any $\mu,\nu\in\Wp(\R)$ we have $[0,\infty) \subseteq I_{\mu,\nu}$ if and only if
	\begin{equation}\label{eq:0vegtelen}
		\frac{F^{-1}_{\mu}(t_1) - F^{-1}_{\mu}(t_2)}{t_1-t_2} \leq \frac{F^{-1}_{\nu}(t_1) - F^{-1}_{\nu}(t_2)}{t_1-t_2} \qquad (0<t_1<t_2<1).
	\end{equation}
	In particular, we always have $[0,\infty) \subseteq I_{\delta_0,\mu}$, thus also $[0,\infty) \subseteq I_{\varphi(\delta_0),\varphi(\mu)}$ for all $\mu\in\Wp(\R)$.
	Therefore applying \eqref{eq:0vegtelen} with $\varphi(\delta_0)$ and $\varphi(\mu)$, we obtain \eqref{eq:phitildeertelmes} and the rest of the statement follows easily.
\end{proof}

Now, we are in the position to prove our theorem on $\mathrm{IsEmb}(\Wp(\R))$ for $p>1, p\neq 2$, using the Banach--Lamperti theorem and our Mankiewicz-type extension lemma.

\begin{theorem}\label{thm:mainWpR}
Let $1<p<\infty, p\neq 2$ and $\varphi\in\isemb(\Wp(\R))$, that is
\bes
\dwp\ler{\varphi(\mu),\varphi(\nu)} = \dwp\ler{\mu, \nu} \qquad (\mu, \nu \in \Wp(\R)).
\ees
Then $\varphi$ is a composition of a trivial isometry and a translation, that is, there exists a $\psi\in\mathrm{Isom}(\R)$ and $\nu\in\Wp(\R)$ such that
\begin{equation}\label{eq:WpIsEmbform}
	F^{-1}_{\varphi(\mu)} = F^{-1}_{\psi_\#(\mu)} + F^{-1}_{\nu} \qquad (\mu\in\Wp(\R)).
\end{equation}
In particular, if $\varphi$ is also bijective, then it is a trivial isometry, therefore we have 
$$
	\mathrm{Isom}(\R) = \mathrm{Isom}(\Wp(\R)) \subsetneq \isemb(\Wp(\R)).
$$
\end{theorem}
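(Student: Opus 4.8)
The plan is to bootstrap the two preceding lemmas together with the Banach--Lamperti theorem. First I would invoke Lemma \ref{lem:Dirac2Dirac}: passing from $\varphi$ to the isometric embedding $\tilde\varphi$ defined by $F^{-1}_{\tilde\varphi(\mu)} = F^{-1}_{\varphi(\mu)} - F^{-1}_{\varphi(\delta_0)}$, which by \eqref{eq:phitildeertelmes} indeed maps $\Wp(\R)$ into itself, we get $\tilde\varphi(\delta_0) = \delta_0$ and either $\tilde\varphi(\delta_t) = \delta_t$ for all $t\in\R$, or $\tilde\varphi(\delta_t) = \delta_{-t}$ for all $t\in\R$. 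In the second case I would compose further with the trivial isometry $R_\#$, where $R(x) := -x$; since $R_\#(\delta_{-t}) = \delta_t$, the map $R_\#\circ\tilde\varphi$ fixes every $\delta_t$. So it suffices to prove: an isometric embedding of $\Wp(\R)$ that fixes every Dirac mass (in particular $\delta_0$ and $\delta_1$) is the identity of $\Wp(\R)$. Indeed, once this is known, unwinding yields $F^{-1}_{\varphi(\mu)} = F^{-1}_{\mu} + F^{-1}_{\nu}$ with $\nu := \varphi(\delta_0)$ in the first case, and $F^{-1}_{\varphi(\mu)} = F^{-1}_{R_\#(\mu)} + F^{-1}_{\nu}$ in the second, which is exactly \eqref{eq:WpIsEmbform} with $\psi = \mathrm{id}_\R$, respectively $\psi = R$.

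For the reduced statement I would regard $\Wp(\R)$, via $\mu\mapsto F^{-1}_\mu$, as the closed convex set of all monotone non-decreasing functions in $L^p((0,1))$; it contains $0 = F^{-1}_{\delta_0}$, it is left invariant by $\tilde\varphi$ with $\tilde\varphi(0)=0$, and its linear span is dense (because $t\mapsto t^n$ lies in it). Hence Lemma \ref{lem:extnon2} applies and gives a linear isometric embedding $L\colon L^p((0,1))\to L^p((0,1))$ extending $\tilde\varphi$. The Banach--Lamperti theorem then writes $(Lf)(x) = h(x)\cdot(Tf)(x)$ for a regular set-isomorphism $T$ and $h = L\mathbf{1} = L F^{-1}_{\delta_1} = F^{-1}_{\tilde\varphi(\delta_1)} = F^{-1}_{\delta_1} = \mathbf{1}$. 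Therefore $L = T$ as operators, and evaluating the isometry identity on characteristic functions gives $\lambda(T(A)) = \|L\bigchi_A\|_p^p = \lambda(A)$ for every Lebesgue set $A$.

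The crux is to conclude that $T$ is the identity of the measure algebra. The additional input is that $L$, extending $\tilde\varphi\colon\Wp(\R)\to\Wp(\R)$, must send monotone non-decreasing $L^p$-functions to monotone non-decreasing $L^p$-functions. Applying this to $\bigchi_{[a,1)}$ for $a\in(0,1)$ — the quantile function of $a\delta_0 + (1-a)\delta_1\in\Wp(\R)$ — the image $L\bigchi_{[a,1)} = \bigchi_{T([a,1))}$ is a $\{0,1\}$-valued monotone non-decreasing function on $(0,1)$, hence equal modulo null sets to $\bigchi_{(c,1)}$ for some $c$; the measure constraint $\lambda(T([a,1))) = 1-a$ then forces $c = a$. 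Thus $T([a,1)) = (a,1)$ for every $a\in(0,1)$, so $T([a,b)) = T([a,1))\setminus T([b,1)) = (a,b)$ for all $0<a<b<1$. Since such intervals generate the Lebesgue $\sigma$-algebra and $T$ respects complements and countable disjoint unions, $T$ acts as the identity of the measure algebra, whence $L = \mathrm{id}$ and $\tilde\varphi = \mathrm{id}_{\Wp(\R)}$; this proves \eqref{eq:WpIsEmbform}.

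For the ``in particular'' assertion, \eqref{eq:WpIsEmbform} exhibits $\varphi$ as the composition of the bijection $\psi_\#$ with the translation by $\nu = \varphi(\delta_0)$; by Definition \ref{def:translation} this translation is bijective exactly when $\nu\in\Delta(\R)$, in which case $\varphi = \psi'_\#$ for some $\psi'\in\isom(\R)$. Combined with injectivity of the homomorphism \eqref{eq:hashtag} and the observation that translation by any non-Dirac $\nu$ is a genuinely non-surjective isometric embedding, this yields $\isom(\R) = \isom(\Wp(\R)) \subsetneq \isemb(\Wp(\R))$. I expect the only genuinely new step to be the third paragraph, namely reading off $T = \mathrm{id}$ from the Banach--Lamperti form; the remaining work is bookkeeping with the two lemmas, though one must check carefully that the set of monotone non-decreasing functions in $L^p((0,1))$ is precisely the image of $\Wp(\R)$ under the quantile identification and that $L$ preserves it because it extends $\tilde\varphi$.
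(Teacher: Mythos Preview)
Your proposal is correct and follows essentially the same route as the paper: reduce via Lemma~\ref{lem:Dirac2Dirac} to an embedding fixing all Diracs, extend linearly by Lemma~\ref{lem:extnon2}, apply Banach--Lamperti with $h=\mathbf{1}$, and then read off $T=\mathrm{id}$ by testing on the quantile functions $\bigchi_{[a,1)}$. The only cosmetic difference is that the paper also tests on $-\bigchi_{(0,a)}=F^{-1}_{a\delta_{-1}+(1-a)\delta_0}$ before concluding, whereas you deduce $T([a,b))=(a,b)$ directly from the $\bigchi_{[a,1)}$ data via $T([a,1))\setminus T([b,1))$; both arguments are valid.
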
 

\begin{proof}
Consider the mapping $\tilde\varphi\in\isemb(\Wp(\R))$ defined in \eqref{eq:ujphi}, which either fixes all Dirac measures, or $\tilde\varphi(\delta_x) = \delta_{-x}$ for all $x\in\R$.
Therefore, it is enough to show that if an isometric embedding $\tilde\varphi$ fixes all Dirac measures, then it fixes every measure in $\Wp(\R)$.

By Lemma \ref{lem:extnon2}, we conclude that the map $\tilde\varphi$ is a restriction of a linear isometric embedding $U\colon L^p((0,1))\to L^p((0,1))$, which fixes all constant functions.
Hence, in the Banach--Lamperti theorem we have $h(x) = 1$ for a.e.~$x$, therefore
\begin{equation}\label{eq:UT}
	Uf(x) = Tf(x) \qquad (\text{a.e.~}x\in (0,1))
\end{equation}
where $T$ is the linear operator generated by a regular set-isomorphism $T$ (which is defined modulo null-sets).
Substituting 
$$
f = \bigchi_{[a,1)} = F^{-1}_{a\delta_0+(1-a)\delta_1}\quad \text{and} \quad f = -\bigchi_{(0,a)} = F^{-1}_{a\delta_{-1}+(1-a)\delta_0}
$$
into \eqref{eq:UT} gives
$$
	\bigchi_{T([a,1))}, -\bigchi_{T((0,a))} \in\Wp(\R)\subset L^p\ler{(0,1)} \qquad (0<a<1).
$$
Therefore, by the properties of the regular set-isomorphism $T$, for every $0<a<1$ there exists a $0<t_a< 1$ such that
$T([a,1)) = [t_a,1)$ and $T((0,a)) = (0,t_a)$.
Hence we obtain
$$
	U \bigchi_{[a,1)} = 	\bigchi_{[t_a,1)} \;\; \text{and} \;\; U \bigchi_{(0,a)} = \bigchi_{(0,t_a)} \qquad (0<a< 1),
$$
but since $U$ preserves the $p$-norm, the number $t_a$ must coincide with $a$ for all $0<a<1$.
Thus we get that $U$ is the identity operator, and the proof of \eqref{eq:WpIsEmbform} is complete.

Finally, note that if $\varphi$ is also assumed to be bijective, then $F^{-1}_{\nu}$ must be a constant function.
Indeed, otherwise there would exist two points $0<t_1<t_2<1$ such that 
$$
\frac{F^{-1}_{\varphi(\mu)}(t_1)-F^{-1}_{\varphi(\mu)}(t_2)}{t_1-t_2}\geq \frac{F^{-1}_{\nu}(t_1)-F^{-1}_{\nu}(t_2)}{t_1-t_2} > 0 \quad \ler{\mu\in\Wp(\R)},
$$ 
and therefore Dirac masses would not be in the range of $\varphi$.
\end{proof}

\subsection{$p=2$ -- A functional analytic description of the exotic flow}
\label{s:w2r}
The aim of this subsection is to take a closer look at Kloeckner's surprising result on $\isom\ler{\mathcal{W}_2(\R)}$.
We introduce the notation $m(\mu)$ for the center of mass of a $\mu\in\mathcal{W}_1(\R)$:
$$
m(\mu) = \int_0^1 F^{-1}_\mu(x)~\mathrm{d}x.
$$
The map $r_c\colon\R\to\R, x\mapsto 2c-x$ is called the reflection through $c\in\R$.

Kloeckner showed in \cite[Theorem 1.1]{Kloeckner-2010} that the group $\isom(\mathcal{W}_2(\R))$ is the semidirect product $\isom(\R)\ltimes\isom(\R)$. Namely, he showed that every isometry of $\mathcal{W}_2(\R)$ is a composition of some of the of the following maps:
\begin{itemize}
	\item[(1)] a trivial isometry, that is, $\psi_\#$ for some $\psi\in\isom(\R)$;
	\item[(2)] the map $\mu\mapsto{(r_{m(\mu)})}_\#(\mu)$, that is, the isometry that reflects every measure through its center of mass; and
	\item[(3)] a so-called exotic isometry $\Phi^q$ for some $q\in\R$, which we focus on in this subsection, see the definition in \eqref{eq:Phiqdef1} and \eqref{eq:Phiqdef2}.
\end{itemize}
Note that all of the above types of isometries leave the set of all Dirac measures invariant, moreover, (2)--(3) fix every Dirac measures.
Also, in cases (1)--(2) the support of each measure $\mu$ is isometrically congruent to the support of its image.
However, this is not the case for exotic isometries.

The set of all measures which are supported on at most two points will be denoted by $\Delta_2(\R)$.
As in \cite{Kloeckner-2010}, we parametrize $\Delta_2(\R)$ by $x,p\in\R$, $\sigma\geq 0$ as
\begin{equation}\label{eq:Phiqdef1}
	\mu(x,\sigma,p) := \frac{e^{-p}}{e^{p}+e^{-p}}\cdot \delta_{x-\sigma e^p} + \frac{e^{p}}{e^{p}+e^{-p}}\cdot \delta_{x+\sigma e^{-p}}.
\end{equation}
Let $q\in\R$ be fixed. Using the above parametrization, Kloeckner defined the exotic isometry $\Phi^q$ on $\Delta_2(\R)$ in the following way:
\begin{equation}\label{eq:Phiqdef2}
	\Phi^q\left(\mu(x,\sigma,p)\right) := \mu(x,\sigma,p+q)  \quad (x,\sigma,p\in\R, \sigma\geq 0).
\end{equation}
He proved that this indeed defines an isometry on $\Delta_2(\R)$ and that it extends uniquely to an isometry of $\mathcal{W}_2(\R)$.
He also pointed out that even though the above definition is constructive, it is not very explicit outside $\Delta_2(\R)$.
Moreover, he illustrated that an explicit formula for general measures supported on three points already seems to be very complicated.

The goal of this subsection is to provide a general explicit formula for the action of exotic isometries, which we shall do by using functional analytic techniques rather than geometric ones. Namely, we prove that if we regard elements of $\mathcal{W}_2(\R)$ as quantile functions, then $\Phi^q$ extends to a real unitary operator $U_ q:L^2((0,1))\to L^2((0,1))$ which can be written in terms of a composition operator, the Volterra operator, a multiplication operator, and a rank-one projection. First we state a well-known lemma (see for instance \cite[Theorem 11.4]{WW}) that will be helpful in our considerations.

\begin{lemma}\label{lem:extensionWt}
	Let $H$ be a real Hilbert space and $S$ be a subset such that $0\in S$ and $\linspan{S}$ is dense in $H$.
	If $\varphi\colon S\to H$ is an isometric embedding such that $\varphi(0) = 0$, then it can be uniquely extended to a (linear) isometric embedding $L\colon H\to H$.
\end{lemma}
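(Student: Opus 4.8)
The plan is to use the fact that an origin-preserving isometric embedding into a Hilbert space automatically preserves the inner product, and then to extend linearly and by continuity.

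First I would note that, since $\varphi(0)=0$, we have $\norm{\varphi(x)}=\norm{\varphi(x)-\varphi(0)}=\norm{x}$ and similarly $\norm{\varphi(y)}=\norm{y}$ for all $x,y\in S$, while $\norm{\varphi(x)-\varphi(y)}=\norm{x-y}$ by the isometry hypothesis. Feeding these into the polarization identity yields
\[
\langle \varphi(x),\varphi(y)\rangle = \tfrac12\left(\norm{x}^2+\norm{y}^2-\norm{x-y}^2\right) = \langle x,y\rangle \qquad (x,y\in S).
\]

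Next I would define $L$ on $\linspan S$ by $L\big(\sum_{i=1}^{n} a_i x_i\big) := \sum_{i=1}^{n} a_i \varphi(x_i)$ with $a_i\in\R$ and $x_i\in S$. The only point needing care is well-definedness: if $\sum_{i} a_i x_i = \sum_{j} b_j y_j$, then expanding $\norm{\sum_{i} a_i \varphi(x_i) - \sum_{j} b_j \varphi(y_j)}^2$ into a finite linear combination of inner products of $\varphi$-images and applying the identity above term by term gives exactly $\norm{\sum_{i} a_i x_i - \sum_{j} b_j y_j}^2 = 0$, so the two images agree. The same expansion shows that $L$ is linear and norm-preserving on the dense subspace $\linspan S$.

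Finally, a norm-preserving linear map on a dense subspace is uniformly continuous, hence extends uniquely to a linear map $L\colon H\to H$ which is still norm-preserving (hence an isometric embedding) by continuity, and which satisfies $L|_S=\varphi$ by construction. For uniqueness one observes, by the same polarization computation applied to $f(x+y)-f(x)-f(y)$ and $f(\lambda x)-\lambda f(x)$, that \emph{any} isometric embedding $f\colon H\to H$ fixing $0$ is automatically linear; thus any two isometric extensions of $\varphi$ are linear, agree on $\linspan S$, and therefore agree on all of $H$ by continuity. There is no genuine obstacle in this argument — the only substantive ingredient is the inner-product preservation recorded above, and everything else is a routine density/completeness argument, which is why the lemma is merely quoted in the literature.
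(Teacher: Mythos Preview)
Your proof is correct and is precisely the standard polarization-plus-density argument one expects for this well-known fact. The paper does not give its own proof but simply cites \cite[Theorem~11.4]{WW}, so there is nothing further to compare.
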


Now our theorem introduced above reads as follows.

\begin{theorem}\label{thm:mainW2R}
Let $q$ be a real number. Then the action of the exotic isometry $\Phi^q$ is given by the following formula:
\begin{align}\label{eq:exoticform}
	F^{-1}_{\Phi^q(\mu)} (x) = & \;(1-e^q)\cdot m(\mu) + \left\{e^q+(e^{-q}-e^q)h_q(x)\right\}\cdot F^{-1}_\mu(h_q(x)) \\
	&+ (e^{q}-e^{-q})\cdot\int_0^{h_q(x)} F^{-1}_\mu(s)~\mathrm{d}s \qquad (\mu\in\Wt(\R), 0<x<1),\nonumber
\end{align}
where
\begin{equation}\label{eq:hq}
	h_q(x) = \frac{x e^{2q}}{1+(e^{2q}-1)x} \qquad (x\in(0,1)).
\end{equation}

\end{theorem}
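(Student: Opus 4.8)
The plan is to recognise the right-hand side of \eqref{eq:exoticform} as (the restriction to $\Wt(\R)$ of) a bounded linear operator on $L^2((0,1))$, to check that this operator agrees with $\Phi^q$ on the two-point measures, and then to invoke the uniqueness of linear extensions from Lemma \ref{lem:extensionWt}. Concretely, for $g\in L^2((0,1))$ define
$$
(V_q g)(x):=(1-e^q)\left(\int_0^1 g(s)\,\mathrm{d}s\right)+\phi_q(x)\cdot g(h_q(x))+(e^q-e^{-q})\int_0^{h_q(x)} g(s)\,\mathrm{d}s,
$$
where $\phi_q(x):=e^q+(e^{-q}-e^q)h_q(x)$ and $h_q$ is given by \eqref{eq:hq}. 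Writing $C_q$ for the composition operator $g\mapsto g\circ h_q$, $V$ for the Volterra operator $g\mapsto\int_0^{(\cdot)}g$, $M_{\phi_q}$ for multiplication by $\phi_q$, and $P$ for the orthogonal projection onto the constant functions, we have $V_q=(1-e^q)P+M_{\phi_q}C_q+(e^q-e^{-q})C_qV$. Since $h_q$ is an increasing smooth bijection of $(0,1)$ with $h_q(0{+})=0$, $h_q(1{-})=1$ and $h_q^{-1}=h_{-q}$ having derivative bounded away from $0$ and $\infty$, the operator $C_q$ is bounded on $L^2((0,1))$; the remaining three summands are manifestly bounded, so $V_q$ is a bounded operator on $L^2((0,1))$. (The identity $F^{-1}_{\Phi^q(\mu)}$ is defined only up to a null set, which is harmless in $L^2((0,1))$; also at $q=0$ one has $h_0=\id$, $\phi_0\equiv1$, and $V_0=\id$, consistent with $\Phi^0=\id$.)

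The heart of the proof is to evaluate $V_q$ on the quantile function of an arbitrary two-point measure. Using the parametrisation \eqref{eq:Phiqdef1}, $F^{-1}_{\mu(x,\sigma,p)}$ equals $x-\sigma e^p$ on $(0,w)$ and $x+\sigma e^{-p}$ on $(w,1)$, with $w=\tfrac{1}{e^{2p}+1}$, and its centre of mass is exactly $x$. Since $h_q^{-1}=h_{-q}$, a short computation gives $h_{-q}(w)=\tfrac{1}{e^{2(p+q)}+1}=:w'$, so that $h_q(y)<w$ precisely when $y<w'$. Splitting into the cases $0<y<w'$ and $w'\leq y<1$ and simplifying --- the only facts used are $w(e^p+e^{-p})=e^{-p}$ together with the telescoping cancellation between the coefficient of $g(h_q(y))$ and the coefficient coming from the Volterra term --- one finds $(V_qF^{-1}_{\mu(x,\sigma,p)})(y)=x-\sigma e^{p+q}$ for $y<w'$ and $=x+\sigma e^{-(p+q)}$ for $y\geq w'$; that is, $V_q F^{-1}_{\mu(x,\sigma,p)}=F^{-1}_{\mu(x,\sigma,p+q)}=F^{-1}_{\Phi^q(\mu(x,\sigma,p))}$ by \eqref{eq:Phiqdef2}.

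To close the argument, note that the quantile functions of the two-point measures are exactly the monotone increasing one-jump step functions on $(0,1)$, and their linear span is the space of all step functions, which is dense in $L^2((0,1))$. Apply Lemma \ref{lem:extensionWt} with $S:=\{F^{-1}_\mu\,|\,\mu\in\Wt(\R)\}$ and $\varphi\colon F^{-1}_\mu\mapsto F^{-1}_{\Phi^q(\mu)}$: this $\varphi$ is an isometric embedding by \cite[Theorem 1.1]{Kloeckner-2010}, it fixes $0=F^{-1}_{\delta_0}$ because $\Phi^q$ fixes every Dirac mass, and $\linspan S$ is dense since $t\mapsto t^n$ lies in $S$ for all $n$. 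Hence $\Phi^q$ extends uniquely to a linear isometric embedding $U_q$ of $L^2((0,1))$ (which is in fact orthogonal, since $\Phi^q$ permutes $\Wt(\R)$, so $\linspan S$ lies in the range of $U_q$). The bounded operators $V_q$ and $U_q$ agree on every one-jump step function by the previous paragraph, hence on the dense span of these functions, hence on all of $L^2((0,1))$; in particular $V_q$ restricted to $\Wt(\R)$ coincides with $U_q$ restricted to $\Wt(\R)$, which is $\Phi^q$. This is precisely \eqref{eq:exoticform}.

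I expect the genuine work to be concentrated in the middle step: tracking how $h_q$ transports the jump point of the two-point quantile function (the identity $h_{-q}(w)=w'$) and then carrying out the two-case algebra so that the coefficient of $g(h_q(y))$ collapses to $e^q$ in the first case while the $\sigma$-coefficient collapses to $e^{-(p+q)}$ in the second. Everything else --- boundedness of $C_q$ and hence of $V_q$, density of the step functions, and the application of Lemma \ref{lem:extensionWt} --- is routine, and one may optionally record the flow identity $V_qV_{q'}=V_{q+q'}$ as a consistency check.
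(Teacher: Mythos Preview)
Your proof is correct and shares the paper's overall strategy: extend $\Phi^q$ to a bounded linear operator on $L^2((0,1))$ via Lemma \ref{lem:extensionWt}, and identify that operator by checking its action on the quantile functions of two-point measures (equivalently, on characteristic functions), whose linear span is dense. The difference is one of direction. You take the formula \eqref{eq:exoticform} as given, package it as a bounded operator $V_q=(1-e^q)P+M_{\phi_q}C_q+(e^q-e^{-q})C_qV$, and verify directly that $V_q F^{-1}_{\mu(x,\sigma,p)}=F^{-1}_{\mu(x,\sigma,p+q)}$ by the two-case computation hinging on $h_q^{-1}(w)=h_{-q}(w)=w'$. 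The paper instead works forward from the abstract extension $U_q$: it computes $T_q:=U_q-\mathbf{1}\otimes\mathbf{1}$ on the characteristic functions $\bigchi_{(0,t)}$ and $\bigchi_{[t,1)}$, splits $T_q|_{\cI}=e^q T_q^{(0)}+e^{-q}T_q^{(1)}$, and then recognises $T_q^{(j)}=C_q\circ T_0^{(j)}$ with $T_0^{(0)}=(M_{\mathbf{1}-\mathbf{x}}-\mathbf{1}\otimes\mathbf{1}+V)|_{\cI}$ and $T_0^{(1)}=(M_{\mathbf{x}}-V)|_{\cI}$, thereby \emph{deriving} the formula rather than verifying it. Your route is shorter and cleaner given that the target formula is already in the statement; the paper's route has the advantage of explaining how one would discover \eqref{eq:exoticform} from scratch, and makes the roles of the Volterra, multiplication and composition pieces more transparent.
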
 

\begin{proof}
	{Again, we will regard the Wasserstein space $\mathcal{W}_2(\R)$ as a convex and closed subset of $L^2((0,1))$ whose linear span is dense in $L^2((0,1))$.} Therefore by Lemma \ref{lem:extensionWt} and \eqref{eq:Phiqdef2} the exotic isometry $\Phi^q$ can be extended to a unique linear isometric embedding which we denote by $U_q$. 
	Let us point out that since $\Delta_2(\R)$ is such a subset of $L^2((0,1))$ whose linear span is dense, therefore $U_q$ is the unique bounded linear operator on $L^2((0,1))$ which satisfies
	\begin{equation}\label{eq:Uq}
		U_q\left( F^{-1}_{\mu(x,\sigma,p)} \right) = F^{-1}_{\mu(x,\sigma,p+q)} \qquad (x,\sigma,p\in\R, \sigma\geq 0).
	\end{equation}
	Therefore, it is enough to find a bounded linear operator (without proving its isometric property) that satisfies \eqref{eq:Uq}.
	Also, observe that \eqref{eq:Uq} is equivalent to $U_q {\bf 1} = {\bf 1}$, and $U_q\left( F^{-1}_{\mu_p} \right) = F^{-1}_{\mu_{p+q}}$ $(p\in\R)$, where we used the shorthand $\mu_p := \mu(0,1,p)$. Define the operator
	$$
		T_q\colon L^2((0,1))\to L^2((0,1)), \;\;\; T_q = U_q - {\bf 1}\otimes {\bf 1},
	$$
	where $({\bf 1}\otimes {\bf 1}) f = \int_0^1 f(s)~\mathrm{d}s \cdot {\bf 1}$ is the rank-one projection onto the subspace of all constant functions.
	Next, we describe how $T_q$ acts on certain characteristic functions.
	Since for all $p\in\R$
	$$
		F^{-1}_{\mu_p} = -e^p\cdot \bigchi_{\left(0,\tfrac{e^{-p}}{e^{-p}+e^p}\right)} + e^{-p}\cdot\bigchi_{\left[\tfrac{e^{-p}}{e^{-p}+e^p},1\right)}
	$$
	holds, we calculate
	\begin{align*}
		T_q \bigchi_{\left(0,\tfrac{e^{-p}}{e^{-p}+e^p}\right)} 
		& = \tfrac{1}{e^{-p}+e^p}\cdot U_q \left( e^{-p}\cdot{\bf 1} - F^{-1}_{\mu_p} \right) - \tfrac{e^{-p}}{e^{-p}+e^p}\cdot{\bf 1}
		= \tfrac{-1}{e^{-p}+e^p}\cdot F^{-1}_{\mu_{p+q}} \\
		& = \tfrac{e^{p+q}}{e^{-p}+e^p}\cdot \bigchi_{\left(0,\tfrac{e^{-p-q}}{e^{-p-q}+e^{p+q}}\right)} - \tfrac{e^{-p-q}}{e^{-p}+e^p}\cdot\bigchi_{\left[\tfrac{e^{-p-q}}{e^{-p-q}+e^{p+q}},1\right)}.
	\end{align*}
	Notice that since $T_q {\bf 1} = 0\cdot {\bf 1}$, we have
	\begin{align*}
		T_q \bigchi_{\left[\tfrac{e^{-p}}{e^{-p}+e^p},1\right)} = -\tfrac{e^{p+q}}{e^{-p}+e^p}\cdot \bigchi_{\left(0,\tfrac{e^{-p-q}}{e^{-p-q}+e^{p+q}}\right)} + \tfrac{e^{-p-q}}{e^{-p}+e^p}\cdot\bigchi_{\left[\tfrac{e^{-p-q}}{e^{-p-q}+e^{p+q}},1\right)}.
	\end{align*}
	Now, we define a set and two transformations on it.
	Let
	$$
		\cI := \left\{ \bigchi_{(0,t)} \,\big|\, 0<t<1 \right\} \cup \left\{ \bigchi_{[t,1)} \,\big|\, 0<t<1 \right\},
	$$
	\begin{align*}
		T_q^{(0)}\colon \cI\to L^2((0,1)), \;\;\;
		& T_q^{(0)} \left( \bigchi_{\left(0,\tfrac{e^{-p}}{e^{-p}+e^p}\right)}  \right) = \tfrac{e^{p}}{e^{-p}+e^p}\cdot \bigchi_{\left(0,\tfrac{e^{-p-q}}{e^{-p-q}+e^{p+q}}\right)} \\
		& T_q^{(0)} \left( \bigchi_{\left[t,1\right)} \right) = -T_q^{(0)} \left( \bigchi_{\left(0,t\right)} \right) \quad (p\in\R, 0<t<1)
	\end{align*}
	and
	\begin{align*}
		T_q^{(1)}\colon \cI\to L^2((0,1)), \;\;\;
		& T_q^{(1)} \left( \bigchi_{\left[\tfrac{e^{-p}}{e^{-p}+e^p}, 1\right)}  \right) 
		= \tfrac{e^{-p}}{e^{-p}+e^p}\cdot\bigchi_{\left[\tfrac{e^{-p-q}}{e^{-p-q}+e^{p+q}},1\right)} \\
		& T_q^{(1)} \left( \bigchi_{\left(0,t\right)} \right) = -T_q^{(1)} \left( \bigchi_{\left[t,1\right)} \right) \quad (p\in\R, 0<t<1).
	\end{align*}
	Observe that $T_q|_{\cI} = e^q \cdot T_q^{(0)} + e^{-q} \cdot T_q^{(1)}$.
	However, at this point we do not know if $T_q^{(0)}$ or $T_q^{(1)}$ can be extended linearly and continuously to the whole space, thus we cannot treat them as operators.
	A calculation gives
	$$
		T_q^{(0)} \left( \bigchi_{\left(0,t\right)} \right) = C_q \circ T_0^{(0)} \left( \bigchi_{\left(0,t\right)} \right) 
		\;\;\;\text{and}\;\;\;
		T_q^{(0)} \left( \bigchi_{\left[t,1\right)} \right) = C_q \circ T_0^{(0)} \left( \bigchi_{\left[t,1\right)} \right)
	$$
	for all $0<t<1$ where
	$$
		C_q\colon L^2((0,1))\to L^2((0,1)), \;\;\; (C_q f)(x) = f(h_q(x)) \quad (x\in (0,1))
	$$
	is a composition operator with symbol $h_q$, see \eqref{eq:hq}. 
	Notice that $C_q$ is a bounded operator, as $h_q$ maps $[0,1]$ bijectively onto itself, it is a smooth function on a neighbourhood of $[0,1]$, and its derivative is bounded from below by $e^{-2|q|}$ on $[0,1]$.
	
	Next, let $M_{{\bf 1}-{\bf x}}$ stand for the multiplication operator by the function ${\bf 1}-{\bf x}$ where ${\bf x}(t) = t$, and $V$ for the Volterra operator: $(Vf)(t) = \int_0^t f(s)~\mathrm{d}s$ $(t\in (0,1))$.
	We notice that
	$$
		T_0^{(0)} \left( \bigchi_{\left(0,t\right)} \right) = (1-t)\cdot \bigchi_{\left(0,t\right)} = (M_{{\bf 1}-{\bf x}} - {\bf 1}\otimes {\bf 1} + V) \bigchi_{\left(0,t\right)}
		\qquad (0<t<1).
	$$
	Furthermore, since 
	$$
	(M_{{\bf 1}-{\bf x}}-{\bf 1}\otimes {\bf 1} + V) \left( \bigchi_{\left(0,t\right)} + \bigchi_{\left[t,1\right)} \right) = (M_{{\bf 1}-{\bf x}}-{\bf 1}\otimes {\bf 1} + V) {\bf 1} = 0\cdot {\bf 1},
	$$ 
	we also have $T_0^{(0)} \left( \bigchi_{\left[t,1\right)} \right) = (M_{{\bf 1}-{\bf x}}-{\bf 1}\otimes {\bf 1} + V) \bigchi_{\left[t,1\right)}$ for all $0<t<1$.
	Therefore, we obtain that 
	$$
	T_0^{(0)} = (M_{{\bf 1}-{\bf x}}-{\bf 1}\otimes {\bf 1} + V)|_\cI.
	$$ 
	
	As for $T_q^{(1)}$, we calculate and notice the following for all $0<t<1$:
	$$
		T_q^{(1)} \left( \bigchi_{\left(0,t\right)} \right) = C_q \circ T_0^{(1)} \left( \bigchi_{\left(0,t\right)} \right),
		\;\;\;
		T_q^{(1)} \left( \bigchi_{\left[t,1\right)} \right) = C_q \circ T_0^{(1)} \left( \bigchi_{\left[t,1\right)} \right)
	$$
	and
	\begin{align*}
		T_0^{(1)} \left( \bigchi_{\left(0,t\right)} \right) &= -t \cdot \bigchi_{\left[t,1\right)} = -t\cdot {\bf 1} + t \cdot \bigchi_{\left(0,t\right)} \\
		&= \left( -{\bf 1}\otimes {\bf 1} + I - T_0^{(0)} \right) \bigchi_{\left(0,t\right)}
		= \left( M_{{\bf x}} - V \right) \bigchi_{\left(0,t\right)}.
	\end{align*}
	As $\left( M_{{\bf x}} - V \right) {\bf 1} = 0\cdot {\bf 1}$, we conclude that 
	$$
	T_0^{(1)} = \left( M_{{\bf x}} - V \right)|_\cI.
	$$
	
	Therefore, the above observations together imply
	\begin{align*}
		U_q|_\cI & = (T_q + {\bf 1}\otimes {\bf 1})|_\cI = e^q \cdot T_q^{(0)} + e^{-q} \cdot T_q^{(1)} + ({\bf 1}\otimes {\bf 1})|_\cI \\
		& = e^q \cdot C_q\circ T_0^{(0)} + e^{-q} \cdot C_q\circ T_0^{(1)} + ({\bf 1}\otimes {\bf 1})|_\cI \\
		& = \left( e^q \cdot C_q\cdot \left(M_{{\bf 1}-{\bf x}}-{\bf 1}\otimes {\bf 1} + V\right) + e^{-q} \cdot C_q \cdot \left(M_{{\bf x}} - V\right) + {\bf 1}\otimes {\bf 1}\right)\big|_\cI.
	\end{align*}
	Thus we eventually conclude 
	\begin{equation*}\label{eq:exoticoperatorform}
		U_q = C_q\cdot \left[
			(1-e^q)\cdot ({\bf 1}\otimes {\bf 1}) +
			e^q \cdot I +
			(e^{-q}-e^q)\cdot M_{\bf x}+
			(e^q - e^{-q}) \cdot V
		\right],
	\end{equation*}
	which implies \eqref{eq:exoticform}, at least for almost every $x\in(0,1)$.
	However, since two right-continuous functions are equal almost everywhere on $(0,1)$ if and only if they coincide on $(0,1)$, we easily conclude \eqref{eq:exoticform}.
\end{proof}

\section*{Acknowledgements}
{This paper is based on discussions made during research visits at the Institute of Science and Technology (IST) Austria, Klosterneuburg. We are grateful to the Erd\H{o}s group for the warm hospitality. We thank the anonymous referee for their valuable comments to the manuscript and their helpful suggestions for changes. We are also grateful to Lajos Moln\'ar for his comments on earlier versions of the manuscript, and to L\'aszl\'o Erd\H{o}s for his suggestions on the structure and highlights of this paper.}

\end{document}